\numberwithin{equation}{section}
\crefname{section}{Section}{Sections}
\crefname{equation}{Equation}{Equation}
\crefname{lemma}{Lemma}{Lemmata}
\crefname{bem}{Remark}{Remarks}
\crefname{kor}{Corollary}{Corollaries}
\crefname{defin}{Definition}{Definitions}
\crefname{prop}{Proposition}{Propositions}
\crefname{folg}{Conclusion}{Conclusions}
\crefname{bsp}{Example}{Examples}
\crefname{claim}{Claim}{Claims}
\crefname{figure}{Figure}{Figures}
\crefname{section}{Section}{Sections}
\crefname{enumi}{Punkt}{Punkte}
\crefname{enumii}{Unterpunkt}{Unterpunkte}
\crefname{equation}{Equation}{Equations}
\crefname{satz}{Theorem}{Theorems}
\crefname{theorem}{Theorem}{Theorems}
\crefname{nota}{Notation}{Notations}
\crefname{con}{Construction}{Constructions}
\newtheorem{thm}{Proposition}[section]
\newtheorem{lem}[thm]{Lemma}
\newtheorem{kor}[thm]{Corollary}
\newtheorem*{sat*}{Theorem}
\newtheorem{sat}[thm]{Theorem}
\theoremstyle{definition}
\newtheorem{defi}[thm]{Definition}
\newtheorem{bei}[thm]{Example}
\newtheorem{bem}[thm]{Remark}
\newtheorem{con}[thm]{Construction}
\newcommand{\N}[0]{{\mathbb N}}
\newcommand{\C}[0]{{\mathbb C}}
\newcommand{\Q}[0]{{\mathbb Q}}
\newcommand{\Z}[0]{{\mathbb Z}}
\newcommand{\F}[0]{{\mathbb F}}
\newcommand{\PZ}[0]{{\mathbb P}}
\newcommand{\A}[0]{{\mathbb A}}
\newcommand{\La}[0]{{\mathbb L}}
\renewcommand{\Im}[0]{\operatorname{Im}}
\newcolumntype{C}[1]{>{\centering\arraybackslash}m{#1}}
\DeclareMathOperator{\Ker}{Ker}
\DeclareMathOperator{\Id}{Id}
\DeclareMathOperator{\End}{End}
\DeclareMathOperator{\GL}{GL}
\DeclareMathOperator{\SL}{SL}
\DeclareMathOperator{\PSL}{PSL}
\DeclareMathOperator{\Gr}{Gr}
\DeclareMathOperator{\Spec}{Spec}
\DeclareMathOperator{\CH}{CH}
\DeclareMathOperator{\Dim}{Dim}
\DeclareMathOperator{\Sect}{Sect}
\DeclareMathOperator{\FGL}{FGL}
\DeclareMathOperator{\di}{div}
\DeclareMathOperator{\gr}{gr}
\DeclareMathOperator{\Sym}{Sym}
\DeclareMathOperator{\IG}{IG}
\DeclareMathOperator{\Sp}{Sp}
\DeclareMathOperator{\Sm}{Sm}
\DeclareMathOperator{\Sch}{Sch}
\DeclareMathOperator{\diag}{diag}
\title{Equivariant cobordism of smooth projective spherical varieties}
\begin{document}
	
	\author[Henry July]{Henry July}
	\address{Bergische Universit\"at Wuppertal, Fakult\"at 4, Gau\ss stra\ss e 20, Wuppertal, Germany}
	\email{\href{mailto:hjuly@uni-wuppertal.de}{hjuly@uni-wuppertal.de}}
	\keywords{Equivariant algebraic cobordism, group actions, horospherical varieties, spherical varieties}
	\subjclass{Primary 14C25; Secondary 19E15}
	\thanks{The author is supported by the DFG Research Training Group 2240: \textit{Algebro-Geometric Methods in Algebra, Arithmetic and Topology.}}
	\begin{abstract}
		We study the equivariant cobordism rings for the action of a torus $T$ on smooth varieties over an algebraically closed field of characteristic zero. We prove a theorem describing the rational $T$-equivariant cobordism rings of smooth projective $G$-spherical varieties with the action of a maximal torus $T$ of $G$. As an application, we obtain explicit presentations for the rational equivariant cobordism rings of smooth projective horospherical varieties of Picard number one.  
	\end{abstract}
	\maketitle
	\vspace{-0.61em}
	
	\section{Introduction}
	Let $k$ be an algebraically closed field of characteristic zero and $G$ a connected reductive group over $k$. The algebraic equivariant cobordism groups were originally introduced for smooth schemes by Deshpande \cite{Deshpande}. This theory was developed independently for all $k$-schemes by Krishna as well as Heller and Malagón-López in \cite{CobofSch,HellerMalagon-Lopez} and is based on the similar construction of equivariant Chow groups presented by Totaro \cite{TotaroChowClassifying} and Edidin-Graham \cite{EdidinGraham}. Many properties of equivariant cobordism were proved in \cite{CobofSch,HellerMalagon-Lopez} building on the theory of non-equivariant cobordism developed by Levine and Morel \cite{LM}. In \cite{CobTorus}, the theory of equivariant cobordism was presented with a special focus in the case where the underlying group is a torus. 
	
	At present, there are already some computations known for this cohomology theory. The equivariant cobordism was computed for toric varieties and flag bundles in \cite{KrishnaUma,Krishna_FlagBundles}. Furthermore, the localisation formula for rational equivariant Chow groups was proved by Brion \cite{BrionTorusActions} and then extended by Krishna \cite{CobTorus} to rational equivariant cobordism which was used in order to describe the rational equivariant cobordism rings of flag varieties and symmetric varieties in \cite{Krishna_Kiritchenko}. The aim of this paper is to study further classes of examples for which the rational equivariant cobordism rings can be computed. As a consequence, one obtains a presentation of the rational ordinary cobordism rings using \cite[Theorem 3.4]{CobTorus}. Now, we describe some of our main results. 
	
	In this paper, all schemes are assumed to be quasi-projective $k$-schemes and all group actions to be linear. We are mainly interested in rational $T$-equivariant cobordism rings of smooth projective spherical varieties with an action of a torus $T$. Brion obtained the first presentation of the rational equivariant Chow rings of smooth projective spherical varieties in \cite[Theorem 7.3]{BrionTorusActions} using the equivariant intersection theory of Edidin and Graham \cite{EdidinGraham} which was more recently generalised to equivariant $K$-theory by Banerjee and Can in \cite[Theorem 1.1]{K-Theory}. Building mainly on Brion's methods we describe the rational equivariant cobordism rings of smooth projective spherical varieties (cf. Theorem \ref{EquivCobThmSpherical}) after a short recollection of some of the main known results and notions which are essential for the computations. The method of localisation was already used in \cite[Theorem 7.8]{CobTorus} in order to describe the rational equivariant cobordism rings for smooth filtrable (e.g. smooth projective spherical) schemes with finitely many $T$-fixed points and only finitely many $T$-stable curves. The aim of this article is to generalise this result for the class of smooth projective spherical varieties with possibly infinitely many $T$-stable curves. Among others, this requires an explicit computation of the equivariant cobordism rings of the projective plane and the Hirzebruch surfaces coming from the pullback maps $i^\ast:\Omega_T^\ast(X^{T'})_\Q\to \Omega_T^\ast(X^T)_\Q$ for all singular codimension one subtori $T'$ in $T$. The proof is based on the following result (cf. Theorem \ref{Relation_In_Equiv_Cobordism_FGL}) describing a relation in equivariant cobordism.
	\begin{sat*}
		Let $X$ be a smooth $T$-variety, $[h:Y\to X]$ the equivariant fundamental class of a $T$-stable cobordism cycle and $f\in k(Y)$ a rational $T$-eigenfunction with weight $\chi$ where $Z_0$ and $Z_\infty$ are the zeros and poles of $f$. Furthermore, we assume that $Z_0$ and $Z_\infty$ are smooth, i.e. that the corresponding sections are transverse. Then the relation
		\begin{align*}
			c_1^T(L_{\chi})\cdot [Y\to X]=h_\ast F_\La\left([Z_0\to Y],[-1]_{F_\La}[Z_\infty\to Y]\right)
		\end{align*}
		holds in $\Omega_\ast^T(X)$ where $F_\La$ denotes the universal formal group law and $[-1]_{F_\La}$ is the inverse in the universal formal group law. 
	\end{sat*}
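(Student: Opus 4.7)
The plan is to exhibit $Z_0\hookrightarrow Y$ and $Z_\infty\hookrightarrow Y$ as the transverse vanishing loci of two $T$-invariant sections of a pair of $T$-equivariant line bundles on $Y$ whose equivariant ratio is $L_\chi$, and then to apply the formal group law for tensor products of line bundles in equivariant cobordism. Since $Y$ is smooth, $Z_0$ and $Z_\infty$ are $T$-stable effective Cartier divisors, so the line bundles $\mathcal{O}_Y(Z_0)$ and $\mathcal{O}_Y(Z_\infty)$ carry natural $T$-linearizations, and their canonical sections $s_0, s_\infty$ are $T$-invariant with zero loci $Z_0, Z_\infty$ respectively. The transversality hypothesis (i.e.\ smoothness of $Z_0$ and $Z_\infty$) is exactly what is needed for the standard description of $c_1^T$ via invariant transverse sections to apply, yielding
\begin{align*}
c_1^T\bigl(\mathcal{O}_Y(Z_0)\bigr)\cdot [Y\to Y] = [Z_0\to Y],\qquad c_1^T\bigl(\mathcal{O}_Y(Z_\infty)\bigr)\cdot [Y\to Y] = [Z_\infty\to Y]
\end{align*}
in $\Omega_\ast^T(Y)$.

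The second step is to identify $\mathcal{O}_Y(Z_0)\otimes\mathcal{O}_Y(-Z_\infty)$ as a $T$-equivariant line bundle. Since $\di(f)=Z_0-Z_\infty$, the rational function $f$ provides a nowhere-vanishing section of $\mathcal{O}_Y(Z_0-Z_\infty)$, and because $f$ is a $T$-eigenfunction of weight $\chi$ this frame is a $T$-semi-invariant of weight $\chi$ (with the paper's sign convention for the $T$-action on rational functions). Hence there is a $T$-equivariant isomorphism $\mathcal{O}_Y(Z_0)\otimes\mathcal{O}_Y(-Z_\infty)\cong L_\chi$, equivalently $\mathcal{O}_Y(Z_0)\cong \mathcal{O}_Y(Z_\infty)\otimes L_\chi$. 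Applying the universal formal group law for the first Chern class of a tensor product then gives
\begin{align*}
c_1^T\bigl(\mathcal{O}_Y(Z_0)\bigr) = F_\La\Bigl(c_1^T\bigl(\mathcal{O}_Y(Z_\infty)\bigr),\,c_1^T(L_\chi)\Bigr),
\end{align*}
and solving this identity for $c_1^T(L_\chi)$ in the formal group law (using that $[-1]_{F_\La}$ is the formal inverse) yields
\begin{align*}
c_1^T(L_\chi) = F_\La\Bigl(c_1^T\bigl(\mathcal{O}_Y(Z_0)\bigr),\,[-1]_{F_\La}\,c_1^T\bigl(\mathcal{O}_Y(Z_\infty)\bigr)\Bigr).
\end{align*}

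Combining the two steps, applying the identity to the fundamental class $[Y\to Y]$, and pushing forward along $h$ by the projection formula---which applies because $L_\chi$ is pulled back from the classifying space of $T$, so that $h^\ast c_1^T(L_\chi)=c_1^T(L_\chi)$---produces the asserted relation in $\Omega_\ast^T(X)$.

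The main technical obstacle is the sign bookkeeping in the equivariant isomorphism $\mathcal{O}_Y(Z_0-Z_\infty)\cong L_\chi$: the sign of $\chi$ depends on the conventions adopted earlier for the $T$-action on $k(Y)$ and for the natural linearizations of divisor line bundles, and must be tracked carefully so that $[-1]_{F_\La}$ ends up attached to $[Z_\infty\to Y]$ rather than to $[Z_0\to Y]$. Subordinate to this, one needs the transverse-invariant-section description of $c_1^T$ in equivariant cobordism, which is the equivariant incarnation of a basic Levine--Morel axiom and is part of the foundational package for equivariant cobordism cited in the introduction.
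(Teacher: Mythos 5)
Your proposal is correct and follows essentially the same strategy as the paper: identify $h^\ast L_\chi$ with $\mathcal{O}_Y(Z_0)\otimes\mathcal{O}_Y(Z_\infty)^\vee$ as a $T$-equivariant line bundle (via the rational section coming from $f$), invoke the section axiom through transversality to turn the two Chern classes into the cycle classes $[Z_0\to Y]$ and $[Z_\infty\to Y]$, apply the formal group law, and push forward via the projection formula. The only difference is one of explicitness: the paper carries out these same steps at the level of the Borel construction (good pairs and inverse limits), verifying along the way that the formal-group-law sum is finite by nilpotence and that taking the limit commutes with $h_\ast$, whereas you invoke the equivariant versions of these axioms directly as part of the foundational package.
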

	
	Using the computations of equivariant cobordism for the projective plane and the Hirzebruch surfaces $\F_n$, we can formulate the main result (cf. Theorem \ref{EquivCobThmSpherical}) by applying the technique of localisation where $\rho_{n/m}$ is an operator on $\Omega^\ast_T(k)_\Q$ (see Definition \ref{Reminder_Quotient}). 
	\begin{sat*}
		For any smooth projective and spherical $G$-variety $X$, the pullback map 
		\begin{align*}
			i^\ast:\Omega^\ast_T(X)_\Q\to \Omega^\ast_T(X^T)_\Q
		\end{align*}
		is injective. Moreover, the image of $i^\ast$ consists of all families $(f_x)_{x\in X^T}$ such that
		\begin{enumerate}[(i)]
			\item $f_x\equiv f_y\mod c_1^T(L_\chi)$ whenever $x$ and $y$ are connected by a $T$-stable curve with weight $\chi$.
			\item $(f_x-f_y)+\rho_{1/2}c_1^T(L_{\alpha})(f_z-f_x)\equiv 0\mod c_1^T(L_{\alpha})^2$ whenever $\alpha$ is a positive root of $G$ relative to $T$, $x,y$ and $z$ lie in a connected component of $X^{\Ker(\alpha)^0}$ isomorphic to a projective plane $\PZ^2$ and $x\geq y\geq z$ are ordered by their corresponding weights.
			\item $f_w-f_x-f_y+f_z\equiv 0\mod c_1^T(L_{\alpha})^2$ whenever $\alpha$ is a positive root of $G$ relative to $T$, $w,x,y$ and $z$ lie in a connected component of $X^{\Ker(\alpha)^0}$ isomorphic to $\F_0$ and $w\geq x, y \geq z$ are ordered by their corresponding weights. 
			\item $\rho_{n/2}c_1^T(L_{\alpha})(f_y-f_z)+\rho_{-n/2}c_1^T(L_{\alpha})(f_w-f_x)\equiv 0\mod c_1^T(L_{\alpha})^2$ whenever $\alpha$ is a positive root of $G$ relative to $T$, $w,x,y$ and $z$ lie in a connected component of $X^{\Ker(\alpha)^0}$ isomorphic to a rational ruled surface $\F_n$, $n\geq 1$, and $w\geq x\geq y \geq z$ are ordered by their corresponding weights. 
		\end{enumerate}
	\end{sat*}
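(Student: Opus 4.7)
The plan is to follow Brion's strategy in equivariant Chow theory, adapted to cobordism by means of the relation theorem stated above together with the surface computations carried out earlier in the paper.

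The first step is injectivity of $i^\ast$. Since $X$ is smooth projective and spherical, it admits a Białynicki-Birula decomposition into finitely many $T$-stable cells and is therefore filtrable with finitely many $T$-fixed points. The localization theorem for rational $T$-equivariant cobordism, in the form of \cite[Theorem 7.8]{CobTorus}, then yields injectivity at once. For necessity of (i)--(iv) I would use functoriality: given $g\in\Omega^\ast_T(X)_\Q$ with $f=i^\ast(g)$, condition (i) follows by restricting $g$ to any $T$-stable curve $C\cong\PZ^1$ of weight $\chi$, since the image of $\Omega^\ast_T(\PZ^1)_\Q\to\Omega^\ast_T((\PZ^1)^T)_\Q$ is the subring cut out by exactly the congruence $f_x\equiv f_y\pmod{c_1^T(L_\chi)}$. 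For (ii)--(iv) I would use that on a spherical $G$-variety every connected component of $X^{\Ker(\alpha)^0}$, for $\alpha$ a positive root of $G$ relative to $T$, is a smooth projective surface on which the quotient one-dimensional torus acts, and by Brion's classification of such components in the spherical setting it is isomorphic to $\PZ^2$, $\F_0$, or $\F_n$ with $n\geq 1$. Restricting to such a surface and plugging in the explicit computation of its $T$-equivariant cobordism ring (this is where the relation theorem of the paper, applied to the natural $T$-stable cobordism cycles on those surfaces together with suitable rational $T$-eigenfunctions, enters) gives exactly (ii), (iii), and (iv).

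The sufficiency direction is the core of the argument. I would proceed by induction on the Białynicki-Birula filtration. A generic one-parameter subgroup orders the fixed points $x_1,\ldots,x_N$, and the closures of the corresponding plus-cells give a filtration $\emptyset=X_0\subset X_1\subset\cdots\subset X_N=X$ whose successive quotients are affine cells centered at $x_i$. The localization long exact sequence, combined with the already-established injectivity, reduces the problem at step $i$ to producing a class in $\Omega^\ast_T(X)_\Q$ whose restriction to $x_j$ vanishes for $j<i$ and whose restriction to $x_i$ equals the prescribed residual value. Such a class is constructed by push-pull from the closure of the cell through $x_i$; the key point is that the local geometry at $x_i$ — the $T$-stable curves and two-dimensional components of the various $X^{\Ker(\alpha)^0}$ meeting $x_i$ — forces the residual value to be divisible by precisely the product of the equivariant first Chern classes $c_1^T(L_\chi)$ of the relevant weights, and conditions (i)--(iv) are exactly what is needed to guarantee this divisibility together with the correct formal group law correction terms.

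The main obstacle, relative to the Chow-theoretic analogue, is that $c_1^T(L_\chi)$ is no longer additive in $\chi$, so the inductive step must track higher-order contributions coming from $F_\La$ and $[-1]_{F_\La}$. This is precisely the reason that (ii) involves $\rho_{1/2}$ and (iv) involves both $\rho_{n/2}$ and $\rho_{-n/2}$: these operators absorb exactly the formal group law discrepancy between the naive Chow-type congruence and the true congruence in $\Omega^\ast_T$, as one verifies by comparing with the explicit presentations of $\Omega^\ast_T(\PZ^2)_\Q$ and $\Omega^\ast_T(\F_n)_\Q$. Once those presentations are available, the inductive construction closes, and the four conditions are both necessary and sufficient.
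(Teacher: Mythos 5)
Your sketch diverges from the paper's argument in a way that leaves a real gap, and it misses the structural theorem that makes the proof actually close.

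The paper's proof is short precisely because it leans on Krishna's analogue of Brion's localization theorem, cited here as Proposition~\ref{EquivCobIntersectionImages} (\cite[Theorems 7.1 and 7.6]{CobTorus}): for a smooth $T$-filtrable scheme with finitely many fixed points, $i^\ast$ is injective and its image is \emph{the intersection over all codimension-one subtori $T'$} of the images of $i^\ast_{T'}:\Omega^\ast_T(X^{T'})_\Q\to\Omega^\ast_T(X^T)_\Q$. This single statement handles both necessity and sufficiency simultaneously, reducing the entire problem to understanding the maps $i^\ast_{T'}$. Then, by Brion's classification (Proposition~\ref{componentsX^T'}), $X^{T'}$ has dimension at most two. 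When $\dim X^{T'}\leq 1$ one applies Proposition~\ref{Krishna7.8} and gets (i); when $\dim X^{T'}=2$ the components are $\PZ^2$ or $\F_n$ and one invokes the surface computation of Proposition~\ref{Brion 7.2.Cob}. You do not cite or use the intersection-of-images statement at all, and instead propose to prove sufficiency by induction on the Bia{\l}ynicki-Birula filtration with a localization exact sequence, constructing at each step a class vanishing on earlier fixed points with prescribed residual value. That step is exactly where your proposal stops being a proof: the assertion that the conditions (i)--(iv) are ``exactly what is needed to guarantee divisibility together with the correct formal group law correction terms'' is the entire content of the theorem and is not argued. In the Chow setting this kind of inductive step can be made to work because $c_1^T$ is additive in the weight; in cobordism the higher FGL terms are precisely what you would have to control, and you give no mechanism for doing so.

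A second, smaller gap: you say one obtains (ii)--(iv) by ``restricting to such a surface and plugging in the explicit computation of its $T$-equivariant cobordism ring.'' But Proposition~\ref{Brion 7.2.Cob} computes the $D$-equivariant cobordism, where $D$ is the one-dimensional torus acting through $\SL_2$ or $\PSL_2$, not the $T$-equivariant one. The paper bridges this with Lemma~\ref{IsomorphismCobordism T and T/F} together with a K\"unneth-type decomposition
\begin{align*}
\Omega_\ast^T(X^{T'})_\Q\cong \Omega_\ast^{T'}(\Spec k)_\Q\otimes_{\La_\Q}\Omega_\ast^{S_m(\alpha)}(X^{T'})_\Q,
\end{align*}
using that the relevant mixed quotients are cellular and \cite[Proposition 7]{HellerMalagon-Lopez}. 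This reduction is nontrivial and needs to be justified rather than elided. Finally, injectivity of $i^\ast$ comes from Proposition~\ref{EquivCobIntersectionImages} (i.e.\ \cite[Theorem 7.6]{CobTorus}), not from \cite[Theorem 7.8]{CobTorus} as you write; the latter requires finitely many $T$-stable curves, which fails precisely in the cases (ii)--(iv) that motivate this theorem.
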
 
	As an application of Theorem \ref{EquivCobThmSpherical}, we compute the equivariant cobordism rings of horospherical varieties of Picard number one which were classified by Pasquier \cite{Pasquier_horospherical_classification} and very recently studied in \cite{Perrin_Geometry_of_horospherical_varieties}. One particular example is the class of odd symplectic Grassmannians $\IG(m,2n+1)$ for integers $n\geq 2$ and $m\in [2,n]$ which were widely studied in the past for example in \cite{Pech_Quantum_cohom_symplectic_Gr,Perrin_Geometry_of_horospherical_varieties}. These computations are done by describing very precisely the geometry of the relevant varieties where we observe in particular that the geometric and algebraic approach for the computation of the equivariant cobordism ring of $\IG(m,2n+1)$ coincide. Furthermore, we give an algorithm describing the geometry and therefore also the equivariant cobordism rings of all horospherical varieties of Picard number one. 
	
	Lastly, we recall the notion of equivariant multiplicities (cf. Definition \ref{equivariant_mulitplicities}) at nondegenerate fixed points $x\in X$ (cf. Definition \ref{non_degenerate_fixed_points}) from \cite[Section 4]{BrionTorusActions} , i.e. the tangent space $T_xX$ contains no nonzero fixed point. This will be used in order to generalise the known results for equivariant Chow rings to equivariant cobordism for smooth projective $T$-varieties $X$ (cf. Proposition \ref{Non-smooth_classes}). To be more precise, we determine the classes $[f:Y\to X]$ for smooth varieties $Y\subseteq X$ in which all fixed points are nondegenerate using equivariant multiplicities (cf. Lemma \ref{Smooth_classes}). In addition, we compute the classes $[f:Y\to X]$ for smooth $Y$ assuming that all fixed points in $X$ and all fixed points in the fibers $f^{-1}(x)$  are nondegenerate for each $x\in X^T$ (cf. Proposition \ref{Non-smooth_classes}). Furthermore, using the previous results we give the explicit example of the odd symplectic Grassmannian $\IG(2,5)$ in which the classes are computed. Finally, we observe that different resolutions of singularities of singular varieties $X_m\subseteq X$ coming from the filtration (\ref{T-filtration}) of smooth projective $T$-varieties $X$ lead to different classes in the equivariant cobordism ring of $\IG(2,5)$ as opposed to the equivariant Chow rings.  
	  
	\vspace*{3mm}
	\textit{Acknowledgements.} I am deeply indebted to Nicolas Perrin for providing key ideas behind the present results which were clarified in very helpful discussions. I would like to thank Michel Brion for his great explanations of some of his results which were used in this article. I am also grateful to Jens Hornbostel for his valuable comments and suggestions during various stages of this work. Additionally, I would like to thank Herman Rohrbach, Christoph Spenke and Thomas Hudson for many helpful discussions. 
	
	\section{A relation in equivariant cobordism}
	In this section, we start with the basic definitions and properties of algebraic cobordism before defining equivariant cobordism. For more details on the properties of algebraic cobordism and equivariant cobordism we refer the reader to the book of Levine and Morel \cite{LM} and the articles of Krishna \cite{CobTorus,CobofSch}, respectively. Before we can define algebraic cobordism, we recall the definition of a formal group law and the construction of the Lazard ring $\La$ after introducing the main notations. 
	
	\subsection{Notations}Let $k$ be an algebraically closed field of characteristic zero and $G$ a connected reductive linear algebraic group over $k$. We denote the category of quasi-projective schemes over $k$ by $\boldsymbol{\Sch}_k$ and the full subcategory consisting of smooth and quasi-projective schemes over $k$ by $\boldsymbol{\Sm}_k$. A scheme is meant to be an object of $\boldsymbol{\Sch}_k$. Similarly, if $G$ is a linear algebraic group over $k$, we denote the category of quasi-projective schemes over $k$ with a $G$-action and $G$-equivariant maps by $G-\boldsymbol{\Sch}_k$. Frequently these schemes will be called $G$-schemes. The corresponding category of smooth and quasi-projective $G$-schemes will be denoted by $G-\boldsymbol{\Sm}_k$. We assume all group actions to be linear, i.e. for any $G$-action on a scheme $X$ there exists a representation $G\to \GL(V)$ on a finite-dimensional $V$ such that $X\to \PZ(V)$ is a $G$-equivariant immersion. This assumption is always fulfilled for normal schemes which was proved by Sumihiro in \cite{Sumihiro_Equiv_Completion_II}. Furthermore, we assume all representations of $G$ to be finite-dimensional. Lastly, throughout this article we will use the notion of $T$-stable subsets whereas our main sources (e.g. \cite{BrionTorusActions,CobTorus}) use the term $T$-invariant subsets for the same property.
	\begin{defi}
		A \textbf{commutative formal group law of rank one} with coefficients in $R$ is a pair $(R,F_R)$ consisting of a commutative ring $R$ and a formal power series $F_R(u,v)=\sum a_{ij}u^iv^j\in R[[u,v]]$ satisfying the following conditions.
		\begin{enumerate}[(i)]
			\item $F(u,0)=F(0,u)=u\in R[[u]]$.
			\item $F(u,v)=F(v,u)\in R[[u,v]]$.
			\item $F(u,F(v,w))=F(F(u,v),w)\in R[[u,v,w]]$.			
		\end{enumerate}
	\end{defi}
	The Lazard ring is a polynomial ring over $\Z$ which is generated by infinitely but countably many variables. It is constructed as the quotient of the polynomial ring $\Z[\{A_{ij}\vert (i,j)\in \N^2\}]$ by the relations obtained by imposing the conditions of a commutative formal group law on the $A_{ij}$. This uniquely defines the universal commutative formal group law $F_{\La}$ of rank one on $\La$ which is given by 
	\begin{align*}
		F_{\La}(u,v)=\sum_{i,j}a_{ij}u^iv^j\in \La[[u,v]]
	\end{align*}
	where $a_{ij}$ is the equivalence class of $A_{ij}$ in $\La$. The grading in the Lazard ring is given by assigning the degree $i+j-1$ to the coefficient $a_{ij}$. The resulting graded ring will be denoted by $\La_\ast$. Alternatively, we could assign degree $1-i-j$ to the coefficient $a_{ij}$ in which case we denote the resulting commutative graded ring by $\La^\ast$. Furthermore, the graded formal power series ring will be denoted by $\La[[u_1,...,u_n]]_{\gr}$ and its equivalent with rational coeffcients is given by the graded topological tensor product $(\La[[u_1,...,u_n]]_{\gr})_\Q:=\La[[u_1,...,u_n]]_{\gr}\widehat{\otimes}_\Z\Q$ which was described in more detail in \cite{CobofSch}.
	
	Recall the existence of a unique formal graded power series $\chi(u_i)\in \La[[u_1,...,u_n]]_{\gr}$ which satisfies $F_{\La}(u_i,\chi(u_i))=0$. For any positive integer $b\in \Z_{\geq 1}$ and $[0]_{F_\La}u_i:=0$ we establish the following notations.
	\begin{align*}
		u_i+_{F_\La}u_j&:=F_\La(u_i,u_j)\in \La[[u_i,u_j]]_{\gr},\\
		[-1]_{F_\La}u_i&:=\chi(u_i)\in \La[[u_i]]_{\gr},\\
		u_i-_{F_\La}u_j&:=F_\La(u_i,\chi(u_j))\in \La[[u_i,u_j]]_{\gr},\\
		[b]_{F_\La}u_i&:=F_{\La}(u_i,[b-1]_{F_\La}u_i)\in \La[[u_i]]_{\gr}.
	\end{align*}
	It is clear that $[b]_{F_\La}u$ is divisible by $u$ for any $u\in \La[[u_1,...,u_n]]_{\gr}$ of degree $1$.	\begin{lem}\label{FGL_rational_coefficients_power_series}
		Let $u\in \La[[u_1,...,u_n]]_{\gr}$ be a homogeneous element of degree $1$. Then there exists an element $g\in \La_\Q[[x]]$ such that $u=g([b]_{F_\La}u)$ for any $b\in \Z_{\geq 1}$. 
	\end{lem}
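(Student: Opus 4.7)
The plan is to obtain $g$ as the formal compositional inverse, in a single variable $x$, of the power series $[b]_{F_\La}(x)\in\La[[x]]$ after inverting $b$, and then to substitute $u$ for $x$. I would first verify by induction on $b$, using the recursion $[b]_{F_\La}(x)=F_\La(x,[b-1]_{F_\La}(x))$ together with $F_\La(u,v)=u+v+\sum_{i,j\geq 1}a_{ij}u^iv^j$, that
\[
[b]_{F_\La}(x)=bx+\sum_{i\geq 2}e_i x^i
\]
for some coefficients $e_i\in\La$. The linear term is exactly $bx$, which is the one place rationality will enter.

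Next I would construct $g(y)=\sum_{j\geq 1}c_j y^j\in\La_\Q[[y]]$ satisfying $g([b]_{F_\La}(x))=x$ by comparing coefficients of $x^n$. The coefficient of $x$ gives $bc_1=1$, hence $c_1=1/b$; for $n\geq 2$ one obtains an equation of the shape $b^n c_n+Q_n(c_1,\ldots,c_{n-1};e_2,\ldots,e_n)=0$ with $Q_n$ a universal $\Z$-polynomial. As $b$ is a unit in $\La_\Q$, this determines $c_n$ uniquely, so $g$ exists and is the formal compositional inverse of $[b]_{F_\La}(x)$ over $\La_\Q$.

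It then remains to justify the substitution $x\mapsto u$. With the convention $\deg(a_{ij})=1-i-j$, the ring $\La$ is concentrated in non-positive degrees, so every homogeneous element of positive degree in $\La[[u_1,\ldots,u_n]]_{\gr}$ lies in the ideal $(u_1,\ldots,u_n)$; in particular $u$, and therefore also $[b]_{F_\La}(u)$, has no constant term in the $u_i$. Hence $g([b]_{F_\La}(u))=\sum_{j\geq 1}c_j\bigl([b]_{F_\La}(u)\bigr)^j$ converges in $(\La[[u_1,\ldots,u_n]]_{\gr})_\Q$, and the universal identity $g([b]_{F_\La}(x))=x$ in $\La_\Q[[x]]$ specialises to $u=g([b]_{F_\La}(u))$. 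I do not anticipate a serious obstacle: the crux is the inductive coefficient comparison, which goes through verbatim once $b$ is inverted, and the only delicate point is checking that the substitution is legitimate in the graded completed ring, which is immediate from $[b]_{F_\La}(u)\in(u_1,\ldots,u_n)$.
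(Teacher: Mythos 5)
Your proposal is correct and takes essentially the same approach as the paper: both proofs amount to inverting $[b]_{F_\La}$ as a formal power series, using that its linear coefficient is $b$, which becomes a unit after passing to $\Q$-coefficients. The paper constructs $\rho(u)$ with $\rho(u)\cdot[b]_{F_\La}u=u$ and then iterates the substitution $u\mapsto\rho(u)\cdot[b]_{F_\La}u$, while you build the one-variable compositional inverse $g$ directly by coefficient comparison and then specialise $x\mapsto u$; these are two presentations of the same recursive construction, with yours being slightly more explicit about why the substitution into the graded completion is legitimate.
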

	\begin{proof}
		Fix $b\in \Z_{\geq 1}$ and write 
		\begin{align*}
			[b]_{F_\La}u=b_1u+b_2a_{11}u^2+b_3a_{21}u^3+b_4a_{12}u^3+b_5a_{11}^2u^3+....
		\end{align*}
		for $b_i\in \Z_{\geq 0}$ for all $i\geq 1$. Now we construct an element $\rho$ of degree $0$ such that $\rho\cdot [b]_{F_\La}u=u$ holds. By comparison of coefficients, we observe that $\rho$ is given by 
		\begin{align*}
			\rho=\frac{1}{b_1}-b_2\frac{a_{11}}{b_1^2}u+\left(-\frac{b_3}{b_1^2}a_{21}-\frac{b_4}{b_1^2}a_{12}+\left(-\frac{b_5}{b_1^2}+\frac{b_2^2}{b_1^3}\right)a_{11}^2\right)u^2+...
		\end{align*}
		Successively replacing $u$ with $\rho\cdot [b]_{F_\La}u$ implies the claim.
	\end{proof}
	The previous lemma leads to the following definition.
	\begin{defi}
		Let $u\in \La[[u_1,...,u_n]]_{\gr}$ be a homogeneous element of degree $1$. Then for $n\in \Z_{\geq 1}$ we define
		\begin{align*}
		[-n]_{F_\La}u:=[-1]_{F_\La}\left([n]_{F_\La}u\right)
		\end{align*}
		and furthermore, if there exists a homogeneous element $u'\in(\La[[u_1,...,u_n]]_{\gr})_\Q$ of degree $1$ such that $[m]_{F_\La}u'=u$ for $m\in \Z_{\geq 1}$ then we define
		\begin{align*}
			\left[\frac{1}{m}\right]_{F_\La}u:=u'.		
		\end{align*} 
	\end{defi}
	\begin{defi}\label{Reminder_Quotient}
			In the setting of the above definition we define the operator $\rho_{n/m}$ by
			\begin{align*}
				\rho_{n/m}u:=\frac{\left[n\right]_{F_\La}\left(\left[\frac{1}{m}\right]_{F_\La}u\right)}{u}
			\end{align*}
			in $(\La[[u_1,...,u_n]]_{\gr})_\Q$ for any $n\in \Z\setminus\{0\}$ and $m\in \Z_{\geq 1}$.
			
	\end{defi}
	\begin{bem}
		The quotient $\rho_{n/m}u$ is indeed in $(\La[[u_1,...,u_n]]_{\gr})_\Q$ for any $n\in \Z\setminus\{0\}$ and $m\in \Z_{\geq 1}$ because $\left[\frac{1}{m}\right]_{F_\La}u\in (\La[[u_1,...,u_n]]_{\gr})_\Q$ is homogeneous of degree $1$ and therefore, $\left[\frac{1}{m}\right]_{F_\La}u=g(u)$ holds for some $g\in \La_\Q[[x]]$ by Lemma \ref{FGL_rational_coefficients_power_series}. Further, $g(u)$ is divisible by $u$ by construction and thus, $\left[n\right]_{F_\La}\left(\left[\frac{1}{m}\right]_{F_\La}u\right)$ is divisible by $u$. 
	\end{bem}
	\subsection{Algebraic Cobordism}	
	Let $X$ be an equidimensional $k$-scheme. Then a cobordism cycle is given by a family $[f:Y\to X,L_1,...,L_r]$ where $Y$ is smooth and irreducible, the map $f$ is projective and the $L_i$ are line bundles over $Y$ whereas the number of line bundles may be empty. The degree of a cobordism cycle is given by $\dim_k(Y)-r$. Let $\mathcal{Z}_\ast$ be the free graded abelian group generated by the isomorphism classes of the cobordism cycles where the grading is given by the degree of the cycles. Now, we impose three relations on $\mathcal{Z}_\ast$ in order to define algebraic cobordism. 
	
	The first one is called the \textbf{dimension axiom}. Let $\mathcal{R}_\ast^{\Dim}(X)$ be the graded subgroup of $\mathcal{Z}_\ast$ generated by the cobordism cycles $[f:Y\to X,L_1,...,L_r]$ such that $\dim_k(Y)<r$. We denote the corresponding quotient $\mathcal{Z}_\ast(X)/\mathcal{R}_\ast^{\Dim}(X)$ by $\underline{\mathcal{Z}}_\ast(X)$. 
	   
	Secondly, for a line bundle $L$ on $X$ and a cobordism cycle $[f:Y\to X,L_1,...,L_r]$, we define the first Chern class operator on $\underline{\mathcal{Z}}_\ast(X)$ by 
	\begin{align*}
		\widetilde{c}_1(L)
		[f:Y\to X,L_1,...,L_r]=
		[f:Y\to X,L_1,...,L_r,f^\ast(L)].
	\end{align*}
	This definition is used in order to impose the \textbf{section axiom}. Given a line bundle $L$ over $X$ and a section $s:X\to L$ which is transverse to the zero section. Let $Z\to X$ be the closed zero-subscheme of $s$. Then we define $\mathcal{R}_\ast^{\Sect}(X)$ to be the graded subgroup of $\underline{\mathcal{Z}}_\ast(X)$ generated by elements of the form $\widetilde{c}_1(L)[\Id:X\to X]-[Z\to X]$. We denote the quotient $\underline{\mathcal{Z}}_\ast(X)/\mathcal{R}_\ast^{\Sect}(X)$ by $\underline{\Omega}_\ast$ which we refer to as \textbf{algebraic pre-cobordism}.
	
	Lastly, we impose the \textbf{formal group law axiom} on algebraic pre-cobordism by considering the subset $\mathcal{R}_\ast^{\FGL}(X)\subseteq \La_\ast\otimes\underline{\Omega}_\ast(X)$ consisting of elements of the form 
	\begin{align*}
		F_\La(\widetilde{c}_1(L),\widetilde{c}_1(M)([\Id:X\to X])-\widetilde{c}_1(L\otimes M)([\Id:X\to X]),
	\end{align*}
	where $L$ and $M$ are line bundles over $X$. Finally, for the subset $\La_\ast\mathcal{R}_\ast^{\FGL}(X)\subseteq \La_\ast\otimes\underline{\Omega}_\ast(X)$ which is given by elements of the form $a\otimes \rho$ for $a\in \La_\ast$ and $\rho\in \mathcal{R}_\ast^{\FGL}(X)$, we define \textbf{algebraic cobordism} of $X$ by 
	\begin{align*}
		\Omega_\ast(X)=\La_\ast\otimes\underline{\Omega}_\ast(X)/\La_\ast\mathcal{R}_\ast^{\FGL}(X).
	\end{align*}
	Let $d$ be the dimension of the equidimensional $k$-scheme $X$. In this case, we define $\Omega^i(X)=\Omega_{d-i}(X)$ for all $i\in\Z$.
	\subsection{Equivariant Cobordism}
	Recall that $G$ is a connected reductive linear algebraic group over $k$. Now we consider for any integer $j\geq 0$ a corresponding pair $(V_j,U_j)$ where $V_j$ is an $l_j$-dimensional representation of $G$ and $U_j$ is a $G$-stable open subset of $V_j$ such that the codimension of the complement $(V_j\setminus U_j)$ in $V_j$ is at least $j$. Furthermore, we ask that $G$ acts freely on $U_j$ such that the quotient $U_j/G$ is a quasi-projective scheme. Such a pair will be called a \textbf{good pair} for the $G$-action corresponding to $j$. It is well known that such a good pair always exists (cf. \cite[Lemma 9]{EdidinGraham}). 
	
	For a $k$-scheme $X$ of dimension $d$ with a $G$-action and an integer $j\geq 0$, let $(V_j,U_j)$ be an $l_j$-dimensional good pair corresponding to $j$. Then we denote the mixed quotient of the product $X\times U_j$ by the free diagonal action of $G$ by $X\times^G U_j$. 
	
	We now present one of the main results concerning actual computations of equivariant algebraic cobordism. Since the original definition is very hard to be computed in general, one can make use of the following result by Krishna \cite{CobofSch} which will serve as our definition of equivariant cobordism.
\begin{thm}\cite[Theorem 6.1]{CobofSch}\label{SequenceofGoodpairs}
	Let $\{(V_j,U_j)\}_{j\geq 0}$ be a sequence of $l_j$-dimensional good pairs such that there exist $G$-representations $(W_j)_{j\geq 0}$ with
	\begin{enumerate}[(i)]
		\item $V_{j+1}=V_j\oplus W_j$ as representations of $G$ with $\dim(W_j)>0$ and
		\item $U_j\oplus W_j\subseteq U_{j+1}$ as $G$-stable open subsets.
	\end{enumerate}
	Then for any scheme $X\in G-\boldsymbol{\Sch}_k$ of dimension $d$ and any $i\in\Z$, one has
	\begin{align*}
	\Omega_i^G(X)\overset{\cong}{\longrightarrow}\varprojlim_j \Omega_{i+l_j-g}\left(X\times^G U_j\right).
	\end{align*}
	Moreover, such a sequence of good pairs always exists. 
\end{thm}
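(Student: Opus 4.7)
The plan is to prove the theorem in three stages: build the inverse system, show it stabilizes in each fixed degree, and then match the limit with $\Omega_i^G(X)$ via cofinality.

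First, I would set up the transition maps. Condition (ii) provides an open embedding $\iota_j \colon X\times^G(U_j\oplus W_j)\hookrightarrow X\times^G U_{j+1}$, while the projection $V_j\oplus W_j\to V_j$ restricts to a $G$-equivariant trivial vector bundle $U_j\oplus W_j\to U_j$ with fiber $W_j$, inducing a vector bundle
\[
p_j\colon X\times^G (U_j\oplus W_j)\longrightarrow X\times^G U_j
\]
of rank $l_{j+1}-l_j$. The transition map of the inverse system is taken to be $\iota_j^\ast\circ (p_j^\ast)^{-1}$, which makes sense once the next step shows the vector bundle pullback is invertible.

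Second, I would prove stabilization degree by degree. Homotopy invariance of Levine--Morel cobordism gives that $p_j^\ast$ is an isomorphism $\Omega_{i+l_j-g}(X\times^G U_j)\xrightarrow{\sim}\Omega_{i+l_{j+1}-g}(X\times^G(U_j\oplus W_j))$. The complement $Z_j$ of $\iota_j$ has codimension at least $j$ in $X\times^G U_{j+1}$, inherited from the codimension hypothesis on $V_j\setminus U_j$ in $V_j$, so $\dim Z_j\le \dim X+l_{j+1}-g-j$. The right-exact localisation sequence
\[
\Omega_\ast(Z_j)\to \Omega_\ast(X\times^G U_{j+1})\to \Omega_\ast(X\times^G(U_j\oplus W_j))\to 0
\]
together with the dimension axiom $\Omega_n(W)=0$ for $n>\dim W$ forces $\iota_j^\ast$ to be an isomorphism in degree $i+l_{j+1}-g$ as soon as $j>\dim X-i$. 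Consequently, for each fixed $i$ the transition maps are isomorphisms for all sufficiently large $j$, so $\varprojlim_j \Omega_{i+l_j-g}(X\times^G U_j)$ exists and equals $\Omega_{i+l_j-g}(X\times^G U_j)$ for any $j$ sufficiently large.

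Third, I would identify this stable value with $\Omega_i^G(X)$ by a cofinality argument. Krishna's original definition of equivariant cobordism is likewise a limit of such mixed-quotient cobordism groups (over all good pairs, not just the nested ones), so one must show any good pair $(V,U)$ corresponding to $j$ is comparable with some term of the nested sequence. This is the standard double-fibration trick: replace $V$ and $V_{j'}$ by $V\oplus V_{j'}$ for $j'$ large and appeal again to homotopy invariance along the two projections, exactly as in the analogous argument for equivariant Chow groups in Edidin--Graham. Existence of a sequence satisfying (i) and (ii) would be obtained by an explicit construction: pick a faithful representation $V_0$ of $G$ and a $G$-stable open $U_0\subseteq V_0$ on which $G$ acts freely with quasi-projective quotient; set $V_j=V_0^{\oplus j}$, $W_j=V_0$, and let $U_j\subseteq V_j$ be the locus where the action is free. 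Condition (i) holds because the non-free locus has codimension growing linearly with $j$, and (ii) is built in.

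The main obstacle is the cofinality step: one must rule out dependence of the limit on the chosen sequence and verify that moving between sequences respects the degree shift $l_j-g$. The stabilization obtained in step two is what makes this harmless, since once each degree has stabilized, passing between cofinal subsystems only amounts to identifying finitely many stable terms via homotopy invariance.
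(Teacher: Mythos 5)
This theorem is cited from Krishna's paper \cite{CobofSch} and is not reproved here, so there is no ``paper's own proof'' to check against; I am assessing your reconstruction against the argument in the cited source.

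Your steps one and two are the technical heart of Krishna's proof, and you have them essentially right. The stabilization argument is the key point: using the right-exact localisation sequence, the codimension bound $\codim(Z_j)\geq j$, and the dimension axiom $\Omega_n(W)=0$ for $n>\dim W$, the restriction $\iota_j^\ast$ is surjective always and injective once $i+l_{j+1}-g > d+l_{j+1}-g-j$, i.e.\ $j>d-i$. Combined with homotopy invariance for the $W_j$-bundle $p_j$, the transition maps become isomorphisms for $j\gg 0$. Minor note: the transition map in degree $i+l_{j+1}-g \to i+l_j-g$ should be $(p_j^\ast)^{-1}\circ\iota_j^\ast$, not $\iota_j^\ast\circ(p_j^\ast)^{-1}$, but that is a typographical slip, not a mathematical one.

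Where you go off track is step three. You describe Krishna's definition of $\Omega_i^G(X)$ as ``a limit of such mixed-quotient cobordism groups over all good pairs,'' and then argue cofinality. That is not Krishna's Definition 4.2: there, $\Omega_i^G(X)$ is defined as $\varprojlim_j \Omega_{i+l_j-g}(X_j)/F^j\Omega_{i+l_j-g}(X_j)$, where $F^\bullet$ is the niveau (coniveau) filtration, and the quotient by $F^j$ is what makes the inverse system well-defined in the first place before any stabilization occurs. The actual content of Theorem 6.1 is that for a nested sequence of good pairs the filtration term $F^j\Omega_{i+l_j-g}(X_j)$ vanishes for $j>d-i$ -- this follows from exactly the same dimension-count you already used in step two, since any class supported in codimension $\geq j$ lives on a scheme of dimension at most $d+l_j-g-j < i+l_j-g$ -- and hence the limit of quotients agrees with the naive limit. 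So you have all the ingredients, but you aim the cofinality argument at the wrong target: what needs to be shown is that the niveau-filtered inverse limit equals the unfiltered one, and your own dimension estimate already does that; the double-fibration trick is still needed, but for Krishna's separate lemma that the definition is independent of the chosen sequence of good pairs, not for matching the two systems in Theorem 6.1 itself. The existence construction in your last paragraph is fine modulo verifying that the non-free locus in $V_0^{\oplus j}$ has codimension growing with $j$, which does require a careful choice of $V_0$ (not every faithful representation works out of the box), but this is the standard Totaro/Edidin--Graham argument and is correctly indicated.
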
 
\begin{bem}\label{Equivariant Fundamental class}
	One should note that the equivariant algebraic cobordism can be non-zero for any $i\in\Z$ unlike the ordinary algebraic cobordism $\Omega^\ast$. Furthermore, we set 
	\begin{align*}
		\Omega_\ast^G(X):=\bigoplus_{i\in\Z}\Omega_i^G(X).
	\end{align*}
	If in addition $X$ is an equi-dimensional $k$-scheme of dimension $d$ with $G$-action, we let $\Omega^i_G(X)=\Omega_{d-i}^G(X)$ and analogously $\Omega^\ast_G(X):= \bigoplus_{i\in\Z}\Omega^i_G(X)$. We denote the equivariant cobordism $\Omega^\ast_G(k)$ of the underlying ground field by $S(G)$. Furthermore, If $G$ is the trivial group, equivariant algebraic cobordism reduces to ordinary algebraic cobordism. Besides that, equivariant algebraic cobordism with rational coefficients is again defined by the graded topological tensor product $\Omega^\ast_G(X)_\Q:=\Omega^\ast_G(X)\widehat{\otimes}_\Z\Q$ which was described in \cite{CobofSch}.
	
	For any $X\in G-\boldsymbol{\Sch}_k$ and a projective morphism $f:Y\to X$ in $G-\boldsymbol{\Sch}_k$ where $Y$ is smooth of dimension $d$ we obtain for any $j\geq 0$ and any $l_j$-dimensional good pair $(V_j,U_j)$ an ordinary cobordism cycle $[Y\times^G U_j\to X\times^G U_j]$ of dimension $d+l_j-g$ by \cite[Lemma 5.1]{CobofSch}. 
	This defines a unique element $\alpha\in \Omega^G_d(X)$ which we call the \textbf{$\boldsymbol{G}$-equivariant fundamental class} of the cobordism cycle $[f:Y\to X]$.  
\end{bem}
\begin{bem}\cite[Section 2.5]{CobTorus}\label{Chern_classes_Cobordism}
	It is well-known that $\Omega_G^\ast(X)$ is an $S(G)$-algebra if $X$ is smooth. In this case, we will identify the commutative $\La$-subalgebra of $\End_\La(\Omega^\ast_G(X))$ generated by the Chern classes of vector bundles with the $\La-$subalgebra of the equivariant cobordism ring $\Omega^\ast_G(X)$ via $c_i^G(E)\mapsto c_i^G(E)\left([\Id:X\to X]\right)$. Therefore, we will denote this image also by $c_i^G(E)$. Since we pass freely between vector bundles $E$ and their corresponding locally free coherent sheaves we will also write $c_1^G(\mathcal{E})$ for a locally free coherent sheaf $\mathcal{E}$. 
\end{bem} 
From now on, we will only consider $G$-equivariant cobordism where the group $G$ is given by some torus $T$.
\begin{thm}\label{T-equivariant_cobordism_of_the_point}\cite[Proposition 6.7]{CobofSch}
	Let $\{\chi_1,...,\chi_n\}$ be a basis of the character group of a torus $T$ of rank $n$. Then the assignment $t_i\mapsto c_1^T(L_{\chi_i})$ yields a graded $S(T)$-algebra isomorphism
	\begin{align*}
		\La[[t_1,...,t_n]]_{\gr}\cong \Omega_T^\ast(k)
	\end{align*}
	where $L_{\chi_i}$ is the $T$-equivariant line bundle over $\Spec k$ corresponding to the character $\chi_i$ of $T$.
\end{thm}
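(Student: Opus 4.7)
The plan is to apply Theorem \ref{SequenceofGoodpairs} to an explicit sequence of good pairs for $T \cong \G_m^n$ and then compute the ordinary cobordism of the finite-dimensional approximations $U_j/T$ via the projective bundle formula of Levine-Morel. For each $j \geq 0$, let $V_j^{(i)} := k^{j+1}$ with $T$ acting diagonally through the character $\chi_i$. Setting $V_j := \bigoplus_{i=1}^n V_j^{(i)}$ and $U_j := \prod_{i=1}^n (V_j^{(i)} \setminus \{0\})$, the complement $V_j \setminus U_j$ has codimension $j+1$, the $T$-action on $U_j$ is free, and $U_j / T \cong (\PZ^j)^n$ is projective. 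Taking $W_j := \bigoplus_{i=1}^n k_{\chi_i}$ makes the sequence $\{(V_j,U_j)\}_{j \geq 0}$ satisfy the hypotheses (i) and (ii) of Theorem \ref{SequenceofGoodpairs}.

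At each finite level, iterating the projective bundle formula along the trivial-bundle tower $(\PZ^j)^n \to (\PZ^j)^{n-1} \to \cdots \to \Spec k$ and using that Chern classes of trivial bundles vanish in positive degree, one obtains the graded $\La$-algebra isomorphism
\[
\Omega^\ast\bigl((\PZ^j)^n\bigr) \cong \La[t_1, \ldots, t_n] / (t_1^{j+1}, \ldots, t_n^{j+1}),
\]
where $t_i$ is the pullback of $c_1(\mathcal{O}_{\PZ^j}(1))$ from the $i$-th factor. I would then identify these classes with the equivariant ones: the Borel construction applied to the $T$-equivariant line bundle $L_{\chi_i}$ on $\Spec k$ produces $L_{\chi_i} \times^T U_j$ on $U_j/T = (\PZ^j)^n$, which by the definition of $U_j$ is canonically isomorphic to the pullback of $\mathcal{O}_{\PZ^j}(1)$ from the $i$-th factor; hence $c_1^T(L_{\chi_i})$ is sent to $t_i$ at every finite stage, and these identifications are compatible with the transition maps.

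Passing to the inverse limit, Theorem \ref{SequenceofGoodpairs} yields
\[
\Omega_T^\ast(k) \cong \varprojlim_j \Omega^\ast\bigl((\PZ^j)^n\bigr) \cong \varprojlim_j \La[t_1, \ldots, t_n] / (t_1^{j+1}, \ldots, t_n^{j+1}),
\]
with transition maps induced by the obvious quotient surjections. The main technical obstacle is to verify that this limit, interpreted in the graded sense, genuinely recovers $\La[[t_1, \ldots, t_n]]_{\gr}$ as defined earlier in the paper. Since $\La^\ast$ is concentrated in non-positive degrees while each $t_i$ has positive degree, for any fixed total degree $d$ only finitely many monomials $t_1^{a_1} \cdots t_n^{a_n}$ can contribute (with coefficients in the appropriate finite-rank graded piece of $\La^\ast$), and the truncation relations $t_i^{j+1}=0$ become inactive as soon as $j+1$ exceeds $\max a_i$. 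This shows that the degree-$d$ part of the limit stabilises at finite $j$, giving exactly the degree-$d$ part of $\La[[t_1, \ldots, t_n]]_{\gr}$. Once this identification is in place, the assignment $t_i \mapsto c_1^T(L_{\chi_i})$ provides the desired graded $S(T)$-algebra isomorphism.
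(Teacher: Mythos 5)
The paper does not reprove this statement; it is quoted from Krishna \cite[Proposition 6.7]{CobofSch}, and your construction (the good pairs $V_j=\bigoplus_i k^{j+1}_{\chi_i}$, $U_j/T\cong(\PZ^j)^n$, the projective bundle formula for trivial bundles, and the identification of $(L_{\chi_i})_T$ with a line of $\mathcal{O}(\pm 1)$'s) is essentially Krishna's original argument, so the overall route is the right one. The good-pair verification, the computation $\Omega^\ast((\PZ^j)^n)\cong\La[t_1,\ldots,t_n]/(t_1^{j+1},\ldots,t_n^{j+1})$, and the compatibility with the transition maps are all fine.

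However, your justification of the last step is wrong, and in a way that matters. You claim that for fixed total degree $d$ only finitely many monomials $t_1^{a_1}\cdots t_n^{a_n}$ contribute and that the degree-$d$ part of the inverse system stabilises at finite $j$. This is false: since $\deg a_{ij}=1-i-j\leq -1$, the coefficient of $t^a$ in a degree-$d$ element lies in $\La^{d-|a|}$, which is nonzero for every $a$ with $|a|\geq d$, so infinitely many monomials occur in each fixed degree (this is unavoidable -- already $c_1^T(L_{\chi_1+\chi_2})=F_\La(t_1,t_2)$ and $[-1]_{F_\La}t_i$ are honest infinite series of degree $1$). If your stabilisation claim were true, the limit would be the polynomial ring $\La[t_1,\ldots,t_n]$, contradicting the statement you are proving; you have imported the reasoning that is valid for Chow groups, where the coefficient ring sits in degree zero. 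The correct argument is: in fixed degree $d$, the term $\Omega^d((\PZ^j)^n)=\bigoplus_{a_i\leq j}\La^{d-|a|}t^a$ is a finite direct sum, the transition maps are the evident surjections killing the monomials with some $a_i=j+1$, and the inverse limit is therefore the full product $\prod_{|a|\geq d}\La^{d-|a|}t^a$, which is by definition the degree-$d$ component of $\La[[t_1,\ldots,t_n]]_{\gr}$. With that replacement the proof is complete.
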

\begin{bem}
	Let $M$ be the character group of a torus $T$ of finite rank. Using Definition \ref{Reminder_Quotient} one observes that
	\begin{align*}
		\rho_{n/m}c_1^T(L_\chi)=\frac{c_1^T(L_{n\chi/m})}{c_1^T(L_\chi)}
	\end{align*}
	holds in $S(T)_\Q$ for any character $\chi\in M$, $n\in \Z\setminus\{0\}$ and $m\in \Z_{\geq 1}$ if $\frac{n\chi}{m}$ is also a character in $M$.
\end{bem}
The first step for the computations in this article is to describe a result in equivariant cobordism which is similar to the following one in Chow groups. For any $T$-scheme $X$, any closed $T$-stable subvariety $Y\subseteq X$ and any rational function $f$ on $Y$ which is an eigenvector of $T$ for weight $\chi$, we have $\chi\cdot [Y]=\di_Y(f)$ in the $\CH^\ast_T(k)$-module $\CH^\ast_T(X)$ (cf. \cite[Theorem 2.1]{BrionTorusActions}). We would like to have such a relation for smooth schemes $X$ in equivariant cobordism and therefore, we need to understand properly the $S(T)$-action on $\Omega_\ast^T(X)$ for $X\in \boldsymbol{\Sm}_k$. 
\begin{con}\label{RelationEquivCob}	
	Now we present a similar construction to the one introduced to prove the above relation in Chow groups in \cite[Theorem 2.1]{BrionTorusActions}. By Proposition \ref{T-equivariant_cobordism_of_the_point} we know that for any basis $\{\chi_1,...,\chi_n\}$ of the character group of $T$ we have the isomorphism $\La[[t_1,...,t_n]]_{\gr}\cong S(T), t_i\mapsto c_1^T(L_{\chi_i})$, where in this case we set $L_{\chi_i}$ to be the one-dimensional representation of $T$ on which $T$ acts via weight $-\chi_i$. Hereby $c_1^T(L_{\chi_i})$ means $c_1^T(L_{\chi_i})[\Spec k\to \Spec k]$ where $[\Spec k\to \Spec k]$ is by abuse of notation the equivariant fundamental class of the ordinary cobordism cycle $[\Spec k\to \Spec k]$. For any character $\chi$ and a $l_j$-dimensional good pair $(V_j,U_j)$ we have the line bundle $(L_{\chi}\times U_j)/T\to U_j/T$ which we denote by $(L_{\chi})_T$. Since equivariant cobordism is defined via an inverse limit construction we consider the elements  
	\begin{align*}
		c_1^T(L_{\chi})[\Spec k\to \Spec k]=\varprojlim_j \widetilde{c}_1((L_{\chi})_T)[U_j/T\to U_j/T].
	\end{align*}
	By \cite[Theorem 4.11]{CobTorus} we know that the $S(T)$-module $\Omega^T_\ast(X)$ is generated by the equivariant fundamental classes of the $T$-stable cobordism cycles in $\Omega_\ast(X)$ for smooth $k$-schemes $X$. Therefore, we take one of these ordinary cobordism cycles $[h:Y\to X]$ and consider $[(Y\times U_j)/T\to (X\times U_j)/T]$ in the $j$-th component of the equivariant fundamental class which we denote by $[Y\to X]_j$ for some good pair $(V_j,U_j)$.  For the morphism $g: (X\times U_j\times U_j)/T\to U_j/T$ we use the exterior product on equivariant cobordism which was described in the proof of \cite[Theorem 5.2]{CobofSch} and thus, we obtain
	\begin{align*}
		c_1^T(L_{\chi})\cdot [Y\to X]&=\varprojlim_j (\widetilde{c}_1((L_{\chi})_T)[U_j/T\to U_j/T]\cdot [(Y\times U_j)/T\to (X\times U_j)/T])\\
		&=\varprojlim_j \widetilde{c}_1(g^\ast(L_{\chi})_T)[(Y\times U_j\times U_j)/T\to (X\times U_j\times U_j)/T]
	\end{align*}
	in $\Omega_\ast^T(X)$. We observe that in this case the line bundle $g^\ast(L_\chi)_T$ is obtained by the good pair $(V_j\times V_j,U_j\times U_j)$ of dimension $2l_j$ for $j\geq 0$.
\end{con}
\begin{sat}\label{Relation_In_Equiv_Cobordism_FGL}
	Let $X$ be a smooth $T$-variety, $[h:Y\to X]$ the equivariant fundamental class of a $T$-stable cobordism cycle and $f\in k(Y)$ a rational $T$-eigenfunction with weight $\chi$ where $Z_0$ and $Z_\infty$ are the zeros and poles of $f$. Furthermore, we assume that $Z_0$ and $Z_\infty$ are smooth, i.e. that the corresponding sections are transverse. Then the relation
	\begin{align*}
		c_1^T(L_{\chi})\cdot [Y\to X]=h_\ast F_\La\left([Z_0\to Y],[-1]_{F_\La}[Z_\infty\to Y]\right)
	\end{align*}
	holds in $\Omega_\ast^T(X)$ where $F_\La$ denotes the universal formal group law and $[-1]_{F_\La}$ is the inverse in the universal formal group law. 
\end{sat}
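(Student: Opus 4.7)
The plan is to identify $h^{\ast}L_{\chi}$ with a $T$-equivariant combination of $\mathcal{O}_Y(Z_0)$ and $\mathcal{O}_Y(Z_\infty)$, unfold the relation by applying successively the formal group law axiom, the section axiom and the projection formula in equivariant cobordism, and perform the computation at the level of the finite-dimensional approximations from Construction~\ref{RelationEquivCob}. Concretely, viewed as a rational section of a suitable $T$-equivariant line bundle, the eigenfunction $f$ of weight $\chi$ has divisor $Z_0-Z_\infty$, and comparing with the canonical $T$-linearisations of $\mathcal{O}_Y(Z_0)$ and $\mathcal{O}_Y(Z_\infty)$ (canonical because $Z_0$ and $Z_\infty$ are $T$-stable) yields a $T$-equivariant isomorphism
\begin{align*}
h^{\ast}L_{\chi}\cong\mathcal{O}_Y(Z_0)\otimes\mathcal{O}_Y(Z_\infty)^{-1}
\end{align*}
of line bundles on $Y$. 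Applying the formal group law axiom gives
\begin{align*}
c_1^T(h^{\ast}L_{\chi})=F_\La\bigl(c_1^T(\mathcal{O}_Y(Z_0)),\;[-1]_{F_\La}c_1^T(\mathcal{O}_Y(Z_\infty))\bigr)
\end{align*}
in $\Omega_T^{\ast}(Y)$.

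Next, because $Z_0$ and $Z_\infty$ are by hypothesis the smooth transverse zero schemes of equivariant sections of $\mathcal{O}_Y(Z_0)$ and $\mathcal{O}_Y(Z_\infty)$, the section axiom delivers
\begin{align*}
c_1^T(\mathcal{O}_Y(Z_0))\cdot[\mathrm{Id}_Y]=[Z_0\hookrightarrow Y],\qquad c_1^T(\mathcal{O}_Y(Z_\infty))\cdot[\mathrm{Id}_Y]=[Z_\infty\hookrightarrow Y]
\end{align*}
in $\Omega_{\ast}^T(Y)$. Substituting these into the previous display and pushing forward along $h$ via the equivariant projection formula $h_{\ast}(c_1^T(h^{\ast}L_{\chi})\cdot\alpha)=c_1^T(L_{\chi})\cdot h_{\ast}\alpha$ produces exactly the asserted identity.

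The formal manipulation above has to be implemented inside the pro-system of finite-dimensional approximations from Construction~\ref{RelationEquivCob}: on each level $(Y\times U_j\times U_j)/T\to(X\times U_j\times U_j)/T$ the ordinary line bundle $g^{\ast}(L_{\chi})_T$ plays the role of $L_{\chi}$, the mixed quotients $(Z_{0}\times U_j\times U_j)/T$ and $(Z_{\infty}\times U_j\times U_j)/T$ are smooth (because $T$ acts freely on $U_j$) and remain cut out transversally by the induced sections, so the three axioms of ordinary algebraic cobordism together with the projection formula are all applicable there. The main obstacle is the compatibility of the resulting identities with the transition maps of the pro-system, and more subtly, ensuring that the equivariant identification $h^{\ast}L_{\chi}\cong \mathcal{O}_Y(Z_0)\otimes\mathcal{O}_Y(Z_\infty)^{-1}$ descends consistently at every level; this is precisely where the careful setup of $L_\chi$ via the doubled good pair $(V_j\oplus V_j,U_j\times U_j)$ in Construction~\ref{RelationEquivCob} is essential, as it absorbs the $T$-action into the quotient while keeping track of the weight $\chi$ in the transition.
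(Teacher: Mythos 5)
Your proposal is correct and follows essentially the same route as the paper's own proof: identify $h^\ast L_\chi$ with $\mathcal{O}_Y(Z_0)\otimes\mathcal{O}_Y(Z_\infty)^\vee$ via the rational eigensection, apply the formal group law axiom, convert the first Chern classes to fundamental classes of the transverse zero loci via the section axiom, push forward with the projection formula, and carry all of this out on the finite-dimensional mixed quotients from Construction~\ref{RelationEquivCob}. The only ingredient you leave implicit that the paper makes explicit is the nilpotence of the ordinary first Chern classes at each finite level, which guarantees the formal group law expansion is a finite sum compatible with the inverse limit, but this is a minor bookkeeping point rather than a gap in the argument.
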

\begin{proof} 
	We consider the rational function $f$ on $Y$ which is an eigenfunction of $T$ of weight $\chi$. One may observe that 
	\begin{align*}
		s:(Y\times U_j\times U_j)/T\to (Y\times U_j\times U_j\times L_{\chi})/T,(y,u_1,u_2)\mapsto (y,u_1,u_2,f(y))
	\end{align*}
	is a rational section of the line bundle $h^\ast g^\ast(L_{\chi})_T$. For this line bundle with the given rational section, we can also write 
	\begin{align*}
		h^\ast g^\ast(L_{\chi})_T=\mathcal{O}_{(Y\times U_j\times U_j)/T}(Z_0-Z_\infty)\cong \mathcal{O}_{(Y\times U_j\times U_j)/T}(Z_0)\otimes \mathcal{O}_{(Y\times U_j\times U_j)/T}(Z_\infty)^\vee
	\end{align*}
	by the known correspondence between Cartier divisors and pairs $(L,s)$ consisting of a line bundle and a rational section. We simplify by setting $L_{\chi^1}=\mathcal{O}_{(Y\times U_j\times U_j)/T}(Z_0)$ and $L_{\chi^2}=\mathcal{O}_{(Y\times U_j\times U_j)/T}(Z_\infty)$. By the smoothness assumption we know that the corresponding sections of $L_{\chi^1}$ and $L_{\chi^2}$ coming from the rational section $s$ are transverse to the zero sections of $L_{\chi^1}$ and $L_{\chi^2}$, respectively. Furthermore, the zero-subschemes of these sections are $T$-stable and hence they define cobordism cycles whose equivariant fundamental classes are in $\Omega_\ast^T(Y)$. In the following computation we will use \cite[Definition 2.1.2]{LM} axiom (A3) and \cite[Definition 2.2.1]{LM} axiom $(\Sect)$. We know further by \cite[Proposition 5.2.1]{LM} that the Chern class operator $\widetilde{c}_1(L)$ on a smooth scheme $X$ is given by $\widetilde{c}_1(L)(\eta)=c_1(L)\cdot \eta$ for $\eta\in \Omega^\ast(X)$ where the first Chern class is given by $c_1(L)=\widetilde{c}_1(L)(1_X)$. Lastly, we have the embeddings of the zero-subschemes $i_0: (Z_0\times U_j\times U_j)/T \to (Y\times U_j\times U_j)/T$ and similarly $i_\infty:(Z_\infty\times U_j\times U_j)/T \to (Y\times U_j\times U_j)/T$. Using all those properties, we obtain 
	\begin{align*}
		&\widetilde{c}_1(g^\ast (L_{\chi})_T)[(Y\times U_j\times U_j)/T\to (X\times U_j\times U_j)/T]\\
		&=\widetilde{c}_1(g^\ast (L_{\chi})_T)h_\ast[1_{(Y\times U_j\times U_j)/T}]\\
		&=h_\ast \widetilde{c}_1(h^\ast g^\ast (L_{\chi})_T)[1_{(Y\times U_j\times U_j)/T}]\\
		&=h_\ast \widetilde{c}_1(L_{\chi^1}\otimes L_{\chi^2}^\vee)[1_{(Y\times U_j\times U_j)/T}]\\
		&=h_\ast F_\La(\widetilde{c}_1(L_{\chi^1}),\widetilde{c}_1(L_{\chi^2}^\vee))[1_{(Y\times U_j\times U_j)/T}]\\
		&=h_\ast\left(\widetilde{c}_1(L_{\chi^1})[1_{(Y\times U_j\times U_j)/T}]+[-1]_{F_\La}\widetilde{c}_1(L_{\chi^2})[1_{(Y\times U_j\times U_j)/T}]\right)\\
		&\hspace{3.5mm} +h_\ast\left(\sum_{i,k\geq 1}a_{ik}\widetilde{c}_1(L_{\chi^1})^i\circ \widetilde{c}_1(L_{\chi^2}^\vee)^k[1_{(Y\times U_j\times U_j)/T}]\right)\\
		&=h_\ast\left(\widetilde{c}_1(L_{\chi^1})[1_{(Y\times U_j\times U_j)/T}]+[-1]_{F_\La}\widetilde{c}_1(L_{\chi^2})[1_{(Y\times U_j\times U_j)/T}]\right)\\
		&\hspace{3.5mm} +h_\ast\left(\sum_{i,k\geq 1}a_{ik}c_1(L_{\chi^1})^i\cdot c_1(L_{\chi^2}^\vee)^k\right)\\
		&=h_\ast \left({i_0}_\ast(1_{(Z_0\times U_j \times U_j)/T})+[-1]_{F_\La}{i_\infty}_\ast(1_{(Z_{\infty}\times U_j \times U_j)/T})\right)\\
		&\hspace{3.5mm} +h_\ast\left(\sum_{i,k\geq 1}a_{ik}{i_0}_\ast\left(1_{(Z_0\times U_j \times U_j)/T}\right)^i\cdot  \left([-1]_{F_\La}{i_\infty}_\ast(1_{(Z_{\infty}\times U_j \times U_j)/T})\right)^k\right)\\
		&=h_\ast\left([Z_0\to Y]_{j}+[-1]_{F_\La}[Z_\infty\to Y]_{j}+\sum_{i,k\geq 1}a_{ik}[Z_0 \to Y]_j^i\cdot \left([-1]_{F_\La}[Z_\infty\to Y]_j\right)^k\right).		    
	\end{align*} 
	Furthermore, the sum is finite since the ordinary first Chern classes are nilpotent. We conclude the claim because taking the limit on these elements commutes with the pushforward $h_\ast$ by the definition of the equivariant pushforward maps.
\end{proof}
In the sequel, we will only consider the case where $s$ is a global section which is transverse to the zero section. In this particular case, the terms containing $Z_\infty$ disappear and one obtains the following statement.
\begin{kor}\label{Relation_We_actually_use}
	Assume that $s$ is a global section which is transverse to the zero section. In this case, the relation
	\begin{align*}
		c_1^T(L_{\chi})\cdot [Y\to X]=[Z_0\to X]
	\end{align*}holds in $\Omega_\ast^T(X)$ where $Z_0$ is the zero-subscheme of $s$ on $Y$. 
\end{kor}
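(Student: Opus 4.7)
The plan is to specialise Theorem \ref{Relation_In_Equiv_Cobordism_FGL} to the case where the rational section $s$ is in fact global, which is exactly the situation in which the polar divisor $Z_\infty$ of the underlying eigenfunction is empty.

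The key observation is that for the empty subscheme the equivariant fundamental class $[Z_\infty\to Y]$ is the zero class in $\Omega_\ast^T(Y)$. Combining this with the unit axiom of the universal formal group law, namely $F_\La(u,0) = u$, which in particular forces the formal inverse series to satisfy $\chi(0) = 0$ (equivalently $[-1]_{F_\La}\cdot 0 = 0$), the right-hand side of the formula from Theorem \ref{Relation_In_Equiv_Cobordism_FGL} collapses to
\begin{align*}
h_\ast F_\La\bigl([Z_0\to Y],\, [-1]_{F_\La}[Z_\infty\to Y]\bigr) = h_\ast F_\La\bigl([Z_0\to Y],\, 0\bigr) = h_\ast[Z_0\to Y].
\end{align*}
Finally, functoriality of the projective pushforward on equivariant fundamental classes identifies $h_\ast[Z_0\to Y]$ with $[Z_0\to X]$, producing the desired identity $c_1^T(L_\chi)\cdot[Y\to X] = [Z_0\to X]$ in $\Omega_\ast^T(X)$.

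There is essentially no real obstacle: the corollary is an immediate consequence of the theorem together with the unit axiom of the formal group law and the convention that an empty subscheme represents the zero class. As a sanity check, one can re-run the computation in the proof of Theorem \ref{Relation_In_Equiv_Cobordism_FGL} under the global-section hypothesis: in that setting the line bundle $L_{\chi^2} = \mathcal{O}_{(Y\times U_j\times U_j)/T}(Z_\infty)$ is trivial at every finite level $j$, so $c_1(L_{\chi^2}) = 0$ and every mixed term with $k\geq 1$ in the formal group law expansion vanishes already before passing to the inverse limit, leaving exactly $\widetilde{c}_1(L_{\chi^1})[1_{(Y\times U_j\times U_j)/T}] = [Z_0\to Y]_j$ in each component.
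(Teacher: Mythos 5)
Your proof is correct and follows essentially the same reasoning the paper gives (the paper states the corollary with only the one-line remark that ``the terms containing $Z_\infty$ disappear''): when $s$ is global the polar divisor $Z_\infty$ is empty, so $[Z_\infty\to Y]=0$, the unit axiom $F_\La(u,0)=u$ together with $\chi(0)=0$ collapses the right-hand side of Theorem~\ref{Relation_In_Equiv_Cobordism_FGL} to $h_\ast[Z_0\to Y]$, and functoriality of the pushforward gives $[Z_0\to X]$. Your sanity check at each finite level $j$, where $L_{\chi^2}$ becomes the trivial bundle and hence $c_1(L_{\chi^2})=0$, is a clean way to see that the cross-terms in the formal group law expansion already vanish before taking the limit.
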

For equivariant Chow groups the above relations generate all relations as explained in the following result of Brion.  
\begin{sat}\cite[Theorem 2.1]{BrionTorusActions}\label{Brion 2.1}
	Let $X$ be a variety with an action of a torus $T$. The $\CH^T_\ast(k)$-module $\CH^T_\ast(X)$ is defined by generators $[Y]$, where $Y\subseteq X$ is a $T$-stable subvariety, and by relations $[\di_Y(f)]=\chi\cdot [Y]$, where $f$ is a non-constant rational function on $Y$ which is an eigenvector of $T$ of weight $\chi$. 
\end{sat}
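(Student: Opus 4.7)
The plan is to combine the Edidin-Graham description $\CH^T_i(X) = \varprojlim_j \CH_{i+l_j-n}(X \times^T U_j)$ (where $n = \dim T$ and $(V_j,U_j)_{j \geq 0}$ is a tower of good pairs) with the classical presentation of ordinary Chow groups by subvarieties modulo divisors of rational functions on (possibly singular) subvarieties. The task is then to match the two presentations on both sides: generators to generators, and relations to relations.

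For generation, I start from a subvariety $V \subseteq X \times^T U_j$, lift it to a $T$-stable subvariety $\widetilde V \subseteq X \times U_j$, and set $Y := p_X(\widetilde V)$, a $T$-stable subvariety of $X$. The key input is that $U_j$ is open in the affine space $V_j$ with complement of codimension at least $j$, so its ordinary Chow groups vanish in the relevant degrees. A moving argument supported on $Y \times U_j$ then shows that $[\widetilde V]$ is rationally equivalent to the ``vertical'' cycle $[Y \times U_j]$, and descent to the mixed quotient gives $[V] = [Y]_T$ in the inverse limit. For the eigenfunction relation, given $f \in k(Y)$ of weight $\chi$, the assignment $(y,u) \mapsto (y,u,f(y))$ defines a rational section of the pulled-back line bundle $(L_\chi)_T$ on $Y \times^T U_j$; the divisor-of-section argument, applied exactly as in Theorem \ref{Relation_In_Equiv_Cobordism_FGL} but with the additive formal group law $F(u,v) = u+v$ on $\CH$, gives $c_1^T(L_\chi) \cdot [Y]_T = [\di_Y(f)]_T$, and under the standard identification $\CH^T_\ast(k) \cong \Sym(M)$ one has $c_1^T(L_\chi) = \chi$.

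The serious step is completeness. Any relation in $\CH^T_\ast(X)$ descends from a rational equivalence on some $X \times^T U_j$, hence from a rational function $F \in k(W)^\times$ on a subvariety $W \subseteq X \times^T U_j$. After reducing by the first step to $W = Y \times^T U_j$ for a $T$-stable subvariety $Y \subseteq X$, I lift $F$ to a $T$-invariant rational function $\widetilde F$ on $Y \times U_j$ and decompose it multiplicatively into $T$-isotypical factors $f_\chi(y)\, g_{-\chi}(u)$, where $f_\chi \in k(Y)$ and $g_{-\chi} \in k(U_j)$ are $T$-eigenfunctions of the indicated weights. Computing $\di_{Y \times U_j}(\widetilde F)$ and pushing forward to $Y \times^T U_j$ exhibits the original rational equivalence as a combination of terms $[\di_Y(f_\chi)]_T - \chi \cdot [Y]_T$, since $\di(g_{-\chi})$ descends on $U_j/T$ to a divisor representing $-\chi \in \CH^1_T(k)$. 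The main obstacle is carrying out this isotypical analysis cleanly while ruling out contributions from outside $U_j$; this is where one essentially uses the freeness of the $T$-action on $U_j$ together with $\codim(V_j \setminus U_j) \geq j$, so that no spurious boundary relations survive after passing to the limit in $j$.
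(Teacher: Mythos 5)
First, note that the paper does not prove this statement at all: it is quoted verbatim from Brion and used as a black box, so there is no in-paper proof to compare against. Judged on its own merits, your outline has the right skeleton (Edidin--Graham presentation in the limit over good pairs, then matching generators and relations with the presentation of ordinary Chow groups), and your treatment of the eigenfunction relation itself is fine -- it is exactly the additive-formal-group-law specialisation of Construction \ref{RelationEquivCob} and Theorem \ref{Relation_In_Equiv_Cobordism_FGL}. But the two structural steps contain genuine gaps. For generation, the claim that $[\widetilde V]$ is rationally equivalent to $[Y\times U_j]$ with $Y=p_X(\widetilde V)$ cannot hold: rational equivalence preserves dimension, and $\widetilde V$ is in general a proper subvariety of $Y\times U_j$. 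Already for $X=\Spec k$, $T=\G_m$, $U_j=k^N\setminus\{0\}$, a point $V\in\PZ^{N-1}=U_j/T$ lifts to a one-dimensional $\widetilde V$ with $Y=\Spec k$, and $[V]=\chi^{N-1}\cdot[Y]_T\neq [Y]_T$. What the theorem asserts is generation as a $\CH^T_\ast(k)$-\emph{module}, and proving it requires decomposing $\CH_\ast(X\times^T U_j)$ over the base $U_j/T$ (a product of projective spaces for the standard good pairs) by a localisation/projective-bundle induction, so that an arbitrary class becomes an $S$-linear combination of classes $[Y\times^T U_j]$; a "moving to the vertical cycle" argument does not produce this.

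The completeness step has a second, independent problem. A relation on $X\times^T U_j$ is carried by a rational function on an \emph{arbitrary} subvariety $W$, and the reduction to $W=Y\times^T U_j$ does not follow from the generation statement (generation controls cycle classes, not the subvarieties supporting rational equivalences); this reduction is the actual crux of Brion's proof and is done by the same filtration induction over $U_j/T$. Moreover, the proposed multiplicative isotypical decomposition $\widetilde F=\prod f_\chi(y)\,g_{-\chi}(u)$ of a $T$-invariant rational function on $Y\times U_j$ is false in general: an invariant function on a product need not factor through the two projections (e.g. $1+xy$ on $\A^1\times\A^1$ with weights $\pm 1$ is irreducible and invariant but is not a product of eigenfunctions pulled back from the factors). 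The correct route is to observe that the divisor of a $T$-invariant rational function on a $T$-stable $W\subseteq Y\times U_j$ is a sum of $T$-stable divisors, and then to induct on the cell structure of $U_j/T$ to express the resulting relations in terms of eigenfunction relations on $T$-stable subvarieties of $X$ together with the relations already present in $\CH^T_\ast(k)$. As written, both the generation and the completeness arguments would need to be replaced by this induction.
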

\begin{bem}
	Similarly to Theorem \ref{Brion 2.1} we know that $\Omega_T^\ast(X)$ is generated by the equivariant fundamental classes of the $T$-stable cobordism cycles in $\Omega^\ast(X)$ by \cite[Theorem 4.11]{CobTorus} for a smooth variety $X$ with an action of a torus. At present the author does not know whether the equivariant cobordism rings $\Omega^\ast_T(X)$ are given by the equivariant fundamental classes of $T$-stable cobordism cycles in $\Omega^\ast(X)$ modulo the previously described relations from Proposition \ref{Relation_In_Equiv_Cobordism_FGL}, but it might be enough for smooth projective varieties $X$ with an action of a torus.
\end{bem}  
\subsection{Localisation at fixed points}
We now prove a lemma which will be useful in the sequel for comparing the equivariant algebraic cobordism with respect to a torus $T$ and its quotient $T/F$ by a finite subgroup $F$.
\begin{lem}\label{IsomorphismCobordism T and T/F}
	Let $T$ be a torus of rank $n$ and $F$ be a finite subgroup. Then we have a graded $\La$-algebra isomorphism
	\begin{align*}
	\Omega_T^\ast(k)_\Q\cong \Omega_{T/F}^\ast(k)_\Q.
	\end{align*}
\end{lem}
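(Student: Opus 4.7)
The strategy is to show that the natural pullback $q^\ast: \Omega_{T/F}^\ast(k) \to \Omega_T^\ast(k)$ induced by the quotient morphism $q: T \to T/F$ — via functoriality of equivariant cobordism in the group — becomes an isomorphism after rationalization. By Proposition \ref{T-equivariant_cobordism_of_the_point}, choosing $\Z$-bases $\{\chi_j\}_{j=1}^n$ of the character lattice of $T$ and $\{\chi_i'\}_{i=1}^n$ of the character lattice of $T/F$, I identify $\Omega_T^\ast(k) \cong \La[[t_1, \ldots, t_n]]_{\gr}$ with $t_j = c_1^T(L_{\chi_j})$ and $\Omega_{T/F}^\ast(k) \cong \La[[t_1', \ldots, t_n']]_{\gr}$ with $t_i' = c_1^{T/F}(L_{\chi_i'})$. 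Both rings have the same number $n$ of variables since $F$ is finite.

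The induced map on character lattices is injective with finite cokernel of order dividing $|F|$, so writing $\chi_i' = \sum_j a_{ij}\chi_j$ yields an integer matrix $A = (a_{ij})$ with $|\det A| = [\widehat{T} : \widehat{T/F}]$ nonzero. Under the identifications above, $q^\ast$ sends
\begin{align*}
t_i' \longmapsto [a_{i1}]_{F_\La} t_1 +_{F_\La} \cdots +_{F_\La} [a_{in}]_{F_\La} t_n,
\end{align*}
whose linearization in the $t_j$ is the matrix $A$, invertible over $\Q$.

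To produce an inverse $\psi: \Omega_T^\ast(k)_\Q \to \Omega_{T/F}^\ast(k)_\Q$, set $m := |F|$, so that $m\chi_j$ lies in the character lattice of $T/F$ for every $j$; write $m\chi_j = \sum_i c_{ji}\chi_i'$ with $c_{ji} \in \Z$ and let $C = (c_{ji})$. The two change-of-basis identities together imply $CA = mI_n$. Using Definition \ref{Reminder_Quotient} together with Lemma \ref{FGL_rational_coefficients_power_series}, I define
\begin{align*}
\psi(t_j) := \left[\tfrac{1}{m}\right]_{F_\La}\bigl([c_{j1}]_{F_\La} t_1' +_{F_\La} \cdots +_{F_\La} [c_{jn}]_{F_\La} t_n'\bigr),
\end{align*}
which is a well-defined degree-one element of $\La_\Q[[t_1', \ldots, t_n']]_{\gr}$ with no constant term in the $t_i'$. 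It therefore extends uniquely by continuity in the graded topology to a graded $\La_\Q$-algebra homomorphism.

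The main obstacle is verifying that $\psi \circ q^\ast_\Q$ and $q^\ast_\Q \circ \psi$ agree with the identity on the degree-one generators (this suffices, since these generators together with $\La_\Q$ topologically generate the respective completed graded rings). This reduces to the linear-algebra identity $CA = mI_n$ together with the defining property that $[m]_{F_\La} \circ [1/m]_{F_\La}$ is the identity on degree-one elements; expanding the two compositions and repeatedly applying the associativity and commutativity of $F_\La$ shows that all higher-order corrections cancel and the compositions collapse to the identity.
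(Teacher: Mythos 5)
Your proof is correct and reaches the same conclusion as the paper's, but organizes the argument differently. The paper chooses bases adapted to the sublattice inclusion $\widehat{T/F} \subseteq \widehat{T}$ via the Smith normal form: there is a basis $\{\chi_1,\dots,\chi_n\}$ of $\widehat{T}$ and positive integers $a_1 \mid \cdots \mid a_n$ so that $\{a_1\chi_1,\dots,a_n\chi_n\}$ is a basis of $\widehat{T/F}$. With that choice the change of variables is diagonal, the whole argument reduces to the single-variable identity $c_1^T(L_{a_i\chi_i}) = [a_i]_{F_\La}\,c_1^T(L_{\chi_i})$ coordinate by coordinate, and the inverse is immediate from Lemma \ref{FGL_rational_coefficients_power_series}. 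You instead work with arbitrary $\Z$-bases, so you must carry an integer matrix $A$ for $q^\ast$, introduce $m=|F|$ and the matrix $C$ with $CA = mI_n$ (hence also $AC = mI_n$ over $\Z$), and then check that $\psi\circ q^\ast_\Q$ and $q^\ast_\Q\circ\psi$ are the identity. What your approach buys is independence from the structure theorem for finitely generated abelian groups; what it costs is that the final cancellation is no longer coordinatewise, so you need the (unstated but correct) fact that all the maps $[n]_{F_\La}$ with $n\in\Z$, and their $\Q$-linear inverse $[1/m]_{F_\La}$, are endomorphisms of the formal group --- i.e.\ they distribute over $+_{F_\La}$ and commute with each other and with the ring homomorphisms $q^\ast$ and $\psi$. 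Once that is in hand, your ``expanding the two compositions'' step is exactly the identity
\begin{align*}
\psi(q^\ast t_i') = [1/m]_{F_\La}\Bigl(\bigl[\textstyle\sum_j a_{ij}c_{j1}\bigr]_{F_\La}t_1' +_{F_\La}\cdots+_{F_\La}\bigl[\textstyle\sum_j a_{ij}c_{jn}\bigr]_{F_\La}t_n'\Bigr) = [1/m]_{F_\La}[m]_{F_\La}t_i' = t_i',
\end{align*}
and similarly for $q^\ast\circ\psi$. I would recommend stating the formal-group-endomorphism property explicitly, since it is the real content of your concluding sentence; with that one addition the argument is complete and fully rigorous.
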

\begin{proof}
	Let $\{\chi_1,...,\chi_n\}$ be a basis of the character group of $T$. The basis of the character group of $T/F$ is then given by $\{a_1\chi_1,...,a_n\chi_n\}$ for positive integers $a_1\vert a_2\vert\cdots\vert a_n$.
	Using Proposition \ref{T-equivariant_cobordism_of_the_point} we know that there is an isomorphism $\Omega_T^\ast(k)\cong \La[[t_1,...,t_n]]_{\gr}$ mapping $c_1^T(L_{\chi_i})\mapsto t_i$ where $L_{\chi_i}$ is the one-dimensional representation of weight $-\chi_i$. Furthermore, we have $\Omega_{T/F}^\ast(k)\cong \La[[t'_1,...,t'_n]]_{\gr}$ for $c_1^{T/F}(L_{a_i\chi_i})\mapsto t'_i$. Since we consider the $L_{\chi_i}$ as the one-dimensional representations of $T$ and similarly those of $T/F$, we know that 
	\begin{align*}
	c_1^T(L_{a_i\chi_i})=c_1^T(L_{\chi_i+...+\chi_i})=c_1^T(L_{\chi_i}\otimes...\otimes L_{\chi_i})=[a_i]_{F_\La}c_1^T(L_{\chi_i})
	\end{align*}  
	holds in $\Omega_T^\ast(k)$ where $F_\La$ denotes again the universal formal group law in cobordism. On the other hand, we know that we can take $c_1^T(L_{a_i\chi_i})$ as generators of $\Omega_T^\ast(k)_\Q$ instead of $c_1^T(L_{\chi_i})$ as soon as we consider rational coefficients by Lemma \ref{FGL_rational_coefficients_power_series}. This leads to the desired isomorphism.  
\end{proof}
\begin{bem}
	The preceding lemma implies the same statement for equivariant Chow groups and furthermore we remark that the finite subgroup $F$ has order $a_1\cdots a_n$. Lastly, the statement also holds if we only take coefficients in $\Z[1/p_1,...,1/p_\ell]$ where $p_1,..,p_\ell$ are the primes occuring in the prime factorisation of $a_n$. Therefore, we only have to invert a finite number of primes in order to obtain the isomorphism of Lemma \ref{IsomorphismCobordism T and T/F}. 
\end{bem}
 To finish this introductory section, let $T$ be a torus and $X\in T-\textbf{Sch}_k$. We recap some basic notation for $T$-filtrable schemes and relevant applications which were presented by Krishna \cite{CobTorus}. We say that $X$ is \textbf{$\boldsymbol{T}$-filtrable} if the fixed point subscheme $X^T$ is smooth and projective and if there is an ordering $X^T=\coprod_{m=0}^n Z_m$ of the connected components $Z_m$ of the fixed point subscheme such that there is a filtration of $X$ by $T$-stable closed subschemes 
\begin{align}
\emptyset=X_{-1}\subsetneq X_0\subsetneq...\subsetneq X_n=X \label{T-filtration}
\end{align}
with $Z_m\subseteq W_m:=X_m\setminus X_{m-1}$ and maps $\phi_m: W_m\to Z_m$ for all $0\leq m\leq n$ which are all $T$-equivariant vector bundles such that the inclusions $Z_m\hookrightarrow W_m$ are the $0$-section embeddings. One should note that if $X$ is $T$-filtrable then so is every closed subscheme $X_m$. We remark that this definition coincides with Brion's definition in \cite[Section 3]{BrionTorusActions} for smooth projective schemes which will be the objects of our main interest. The following result which is a consequence of the Bialynicki-Birula decomposition will be essential for our understanding of the equivariant cobordism of smooth projective varieties.
\begin{thm}\cite[Theorem 4.3]{BB}\label{BB_smooth_projective}
	Let $X$ be a smooth projective variety with an action of a torus $T$. Then $X$ is $T$-filtrable.
\end{thm}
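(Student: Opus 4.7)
The plan is to reduce from an arbitrary torus action to a one-parameter subgroup and then apply the classical Bialynicki-Birula construction, ordering the attracting cells so that the unions of closures give the required $T$-stable filtration. Throughout, one should work with the given smooth projective $T$-variety $X$ and exploit Sumihiro's theorem to find a $T$-equivariant embedding into projective space.

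First I would choose a one-parameter subgroup $\lambda \colon \G_m \to T$ that is sufficiently general, meaning that its image meets no kernel of any character occurring as a weight on any fiber $T_xX$ for $x \in X^T$ except the zero character. Since there are only finitely many fixed components and each contributes finitely many weights, such a $\lambda$ exists. For this $\lambda$ one has $X^{\G_m} = X^T$ at the level of closed subschemes, so it suffices to produce the filtration with respect to the induced $\G_m$-action, which will automatically be $T$-stable because all attracting cells are constructed using limits, and limits are compatible with the ambient $T$-action.

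Next I would establish that $X^T$ is smooth and projective. Projectivity is immediate since $X^T$ is a closed $T$-stable subscheme of $X$. Smoothness follows from Iversen's theorem: the fixed point scheme of a linearly reductive group acting on a smooth scheme in characteristic zero is smooth. Let $Z_0, \ldots, Z_n$ denote its connected components. On each $Z_m$ the normal bundle $N_{Z_m/X}$ decomposes into weight subbundles $N_{Z_m/X} = N_{Z_m/X}^+ \oplus N_{Z_m/X}^-$ according to whether $\lambda$ acts with positive or negative weights. Then define the attracting (plus) cell
\begin{align*}
W_m := \{x \in X : \lim_{t \to 0} \lambda(t) \cdot x \in Z_m\},
\end{align*}
together with the retraction $\phi_m \colon W_m \to Z_m$ sending $x$ to this limit. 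By the theorem of Bialynicki-Birula, each $W_m$ is locally closed and $T$-stable, the $W_m$ partition $X$, and $\phi_m$ is a $T$-equivariant affine bundle; since $X$ is smooth, an étale-local computation identifies $\phi_m$ with the projection $N_{Z_m/X}^+ \to Z_m$, so it is in fact a $T$-equivariant vector bundle with the inclusion $Z_m \hookrightarrow W_m$ being the zero section.

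The final, and in my view main, obstacle is producing a linear ordering of the cells such that each $X_m := W_0 \cup W_1 \cup \cdots \cup W_m$ is closed in $X$. I would approach this by using the partial order on the set $\{Z_0, \ldots, Z_n\}$ defined by $Z_i \preceq Z_j$ if and only if $\overline{W_i}$ meets $Z_j$ (equivalently, if and only if $Z_j$ lies in the closure of $W_i$), and then refining it to a total order by choosing any linear extension. That this partial order encodes the closure relations of the cells, i.e.\ that $\overline{W_m} \subseteq \bigcup_{Z_i \preceq Z_m} W_i$, is the core content of the Bialynicki-Birula stratification, and can be verified by examining, for any limit point $y \in \overline{W_m}$, the trajectory $t \mapsto \lambda(t) \cdot y$ and the position of its limit at $t \to 0$. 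Reversing the order if necessary so that minimal elements come first yields the required chain $\emptyset = X_{-1} \subsetneq X_0 \subsetneq \cdots \subsetneq X_n = X$ of $T$-stable closed subschemes with $W_m = X_m \setminus X_{m-1}$, completing the verification that $X$ is $T$-filtrable.
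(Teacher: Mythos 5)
The paper does not supply its own proof of this statement; it is cited directly from Bia\l{}ynicki-Birula, so there is nothing internal to compare against. Your outline follows the classical argument: pass to a generic one-parameter subgroup $\lambda$ with $X^{\lambda} = X^{T}$, invoke Iversen for smoothness of $X^{T}$, decompose the normal bundles by the sign of the $\lambda$-weights, take attracting cells $W_m$ with retraction $\phi_m$ to $Z_m$, and then order the cells to produce a filtration by closed subschemes. The reduction to $\mathbb{G}_m$, the $T$-stability of the cells, and the vector bundle structure of $\phi_m$ are all handled correctly (the last by citing the 1973 BB decomposition theorem, which is a distinct result from the filterability you are proving, so this is not circular).

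The genuine gap is in the last step, which you yourself identify as the main obstacle. You declare the relation ``$Z_i \preceq Z_j$ iff $\overline{W_i}$ meets $Z_j$'' to be a partial order and then take a linear extension, but you never show that this relation is acyclic (equivalently, that its transitive closure is antisymmetric). Without acyclicity there is no linear extension, and the proposed closed subschemes $X_m$ need not exist. Your proposed verification, ``examining the trajectory $t \mapsto \lambda(t)\cdot y$ and the position of its limit,'' establishes at most that for $y \in \overline{W_m}\setminus W_m$ lying in $W_j$, the limit of $y$ lands in $Z_j$; it gives no reason why a cycle $Z_{i_1} \prec Z_{i_2} \prec \cdots \prec Z_{i_k} \prec Z_{i_1}$ cannot occur. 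The missing ingredient is exactly the Sumihiro embedding that you mention at the outset but never actually use: fix a $T$-equivariant closed embedding $X \hookrightarrow \mathbb{P}(V)$, decompose $V = \bigoplus_a V_a$ into $\lambda$-weight spaces, and assign to each fixed component $Z_m$ the integer $\mu(Z_m) = a$ such that $Z_m \subseteq \mathbb{P}(V_a)$. One then checks, via exactly the limit computation you gesture at, that $Z_i \preceq Z_j$ forces $\mu(Z_j) \leq \mu(Z_i)$, with equality only when $i = j$. This strictly decreasing integer invariant rules out cycles and is the actual content of \cite[Theorem 4.3]{BB}; without making this argument explicit the proof is incomplete at its central point. (There is also an index reversal in the claimed inclusion $\overline{W_m} \subseteq \bigcup_{Z_i \preceq Z_m} W_i$, which should read $\bigcup_{Z_m \preceq Z_i} W_i$; this is cosmetic and absorbed by your remark about reversing the order, but worth fixing.)
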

The following proposition is very useful for computing equivariant cobordism. As opposed to the localisation theorem for Chow groups (cf. \cite[Theorem 3.3]{BrionTorusActions}) one has to assume that the fixed point scheme consists only of finitely many isolated points in order to formulate the equivalent statement in equivariant cobordism.
\begin{thm}\cite[Theorem 7.6, Theorem 7.1]{CobTorus}\label{EquivCobIntersectionImages}
	Let $X$ be a smooth $T$-filtrable scheme with an action of a torus $T$. Further, let $X^T$ consist of finitely many fixed points $x_1,...,x_s$ and let $i:X^T\hookrightarrow X$ denote the inclusion of the fixed point subscheme. Then the pullback map $i^\ast:\Omega^\ast_T(X)\to \Omega^\ast_T(X^T)$ is injective and its image is the intersection of the images of
	\begin{align*}
	i^\ast_{T'}:\Omega^\ast_T(X^{T'})_\Q\to \Omega^\ast_T(X^T)_\Q 
	\end{align*} 
	where $T'$ runs over all subtori of codimension one in $T$.
\end{thm}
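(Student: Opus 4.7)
The plan is to adapt Brion's proof of the analogous statement for Chow groups in \cite[Theorem 3.3]{BrionTorusActions} to the equivariant cobordism setting, replacing the linear relations from the Chow case by the formal group law relations provided by Theorem \ref{Relation_In_Equiv_Cobordism_FGL}. The main structural input is the $T$-filtration
\begin{align*}
\emptyset = X_{-1}\subsetneq X_0\subsetneq\dots\subsetneq X_n = X.
\end{align*}
Under the hypothesis that $X^T$ is finite, each connected component $Z_m$ is a single fixed point $x_m$ and each cell $W_m = X_m\setminus X_{m-1}$ is a $T$-equivariant vector bundle over $x_m$. By homotopy invariance of equivariant cobordism, $\Omega^\ast_T(W_m)\cong \Omega^\ast_T(x_m)\cong S(T)$, and restriction to $x_m$ is an isomorphism.

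For injectivity I would proceed by induction on $n$. The base case $n=0$ is trivial since then $X = X^T$. For the inductive step, the localisation exact sequence
\begin{align*}
\Omega^T_\ast(X_{n-1})\to \Omega^T_\ast(X)\to \Omega^T_\ast(W_n)\to 0
\end{align*}
shows that any class in the kernel of $i^\ast$ restricts trivially to $W_n$, since the unique $T$-fixed point of $W_n$ is $x_n$, and hence is pushed forward from $X_{n-1}$, which is itself $T$-filtrable. Applying the induction hypothesis to $X_{n-1}$ concludes the argument. The containment $\mathrm{Image}(i^\ast)\subseteq \bigcap_{T'}\mathrm{Image}(i^\ast_{T'})$ of the image characterisation is immediate since $i$ factors as $X^T\hookrightarrow X^{T'}\hookrightarrow X$ for every codimension-one subtorus $T'\subseteq T$.

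The hard direction is to show that every compatible family $(f_x)_{x\in X^T}$ in the intersection lifts to a class on $X$. Again I would proceed by induction on the filtration length, assuming a lift exists over $X_{n-1}$. One extends $f_{x_n}\in S(T)$ to a class on $\overline{W_n}$ via the vector-bundle structure and pushes it forward to $X$; the resulting class matches the family at $x_n$ but may fail to match at the other fixed points, and the discrepancy is measured by the classes of $T$-stable curves joining $x_n$ to lower-indexed fixed points together with the weights of $T$ on the relevant tangent spaces. The main obstacle is to show that the compatibility of $(f_x)$ along every codimension-one subtorus $T'$ forces this discrepancy to come from $\Omega^\ast_T(X_{n-1})_\Q$, so the induction can close. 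This amounts to a careful analysis of how the formal group law recombines contributions from distinct $T'$; here one invokes Theorem \ref{Relation_In_Equiv_Cobordism_FGL} together with the rational invertibility statement of Lemma \ref{FGL_rational_coefficients_power_series}, and uses that only finitely many codimension-one subtori contribute nontrivially since only finitely many weights appear in the tangent spaces $T_{x_m}X$.
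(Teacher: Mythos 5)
This statement is a recalled result, cited verbatim from Krishna's paper (Theorems 7.1 and 7.6 of \cite{CobTorus}); the present paper gives no proof, so there is no internal argument to compare against. Taking your proposal on its own terms, there are two genuine gaps.

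For injectivity, the induction through the localisation sequence does not close. If $\alpha\in\ker(i^\ast)$ restricts trivially to $W_n$, then $\alpha=j_\ast\beta$ with $j:X_{n-1}\hookrightarrow X$ a closed immersion and $\beta\in\Omega^T_\ast(X_{n-1})$; but to invoke the induction hypothesis on $X_{n-1}$ you need both that $\beta$ pulls back to zero on $X_{n-1}^T$ and that $j_\ast$ is injective. The former does not follow from $i^\ast j_\ast\beta=0$, because $i^\ast j_\ast$ is a Gysin-type composite (a multiple of the restriction to $X_{n-1}^T$ by an Euler class in general), not the naive restriction to $X_{n-1}^T$; and the latter is not addressed at all and is not automatic for the pushforward of a closed $T$-stable subscheme. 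The clean route, and the one Krishna actually follows, is instead to use that $\Omega^\ast_T(X)$ is a free $S(T)$-module (from the BB filtration and homotopy invariance, as you observe), combine this with the localisation isomorphism of the type recalled as Corollary \ref{Krishna_Corollary_7.3} --- namely that $i_\ast$, hence also $i^\ast$ by the self-intersection formula (Proposition \ref{Self-intersection_formula}), becomes an isomorphism after inverting all nonzero characters in $M$ --- and then conclude injectivity from the injectivity of $S(T)_\Q\hookrightarrow S(T)_\Q[M^{-1}]$ on free modules.

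For the image characterisation, the step ``extend $f_{x_n}$ to a class on $\overline{W_n}$ via the vector-bundle structure and push forward'' is not well-defined: $\overline{W_n}$ need not be smooth, so it does not directly define a cobordism class without choosing a resolution, and the ``discrepancy'' you describe is not shown to lie in the image coming from $X_{n-1}$. You also acknowledge that the crucial closing step of the induction is unresolved; that step is precisely where the substance of the theorem lives. The actual strategy (Brion \cite[Theorem 3.3]{BrionTorusActions} for Chow, transported by Krishna) is to use the localisation isomorphism to write any compatible family $(f_x)$ as $i^\ast(\alpha)$ for some $\alpha\in\Omega^\ast_T(X)_\Q[M^{-1}]$, and then to prove integrality of $\alpha$ over $S(T)_\Q$ by analysing the denominators one character at a time: compatibility along each codimension-one subtorus $T'$ precisely clears the denominator supported at the corresponding character. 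This is a denominator-clearing argument over the localised ring, not an inductive gluing of cell classes, and it is what makes the hypothesis ``in the image of all $i^\ast_{T'}$'' effective. Finally, invoking Theorem \ref{Relation_In_Equiv_Cobordism_FGL} here is anachronistic: that relation is a new contribution of the present paper and plays no role in Krishna's original proof of the cited statement.
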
  
Lastly, we present the equivalent result of \cite[Theorem 3.4]{BrionTorusActions} in equivariant cobordism.
\begin{thm}\cite[Theorem 7.8]{CobTorus}\label{Krishna7.8}
	Let $X$ be a smooth $T$-filtrable scheme where a torus $T$ acts with finitely many fixed points $x_1,...,x_s$ and finitely many $T$-stable curves. Then the image of 
	\begin{align*}
	i^\ast: \Omega^\ast_T(X)_\Q\to \Omega^\ast_T(X^T)_\Q
	\end{align*}
	is the subalgebra of $(f_1,...,f_s)\in S(T)^s_\Q$ such that $f_i\equiv f_j\mod c_1^T(L_\chi)$ whenever $x_i$ and $x_j$ are connected by a stable irreducible curve where $T$ acts through the weight $\chi$.
\end{thm}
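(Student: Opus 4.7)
The plan is to reduce the problem to a model computation on a $T$-equivariant $\PZ^1$ via Theorem \ref{EquivCobIntersectionImages}, and then to carry out that computation using the $T$-filtrable structure of $\PZ^1$ together with the equivariant self-intersection formula.

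By Theorem \ref{EquivCobIntersectionImages}, the image of $i^\ast$ equals $\bigcap_{T'} \Im(i^\ast_{T'})$ as $T'$ ranges over codimension-one subtori of $T$. Only finitely many such $T'$ are \emph{singular}, in the sense that $X^{T'} \neq X^T$, because $X$ has only finitely many $T$-stable curves; the remaining $T'$ contribute no condition. For each singular $T'$, the connected components of $X^{T'}$ are either isolated $T$-fixed points (which impose no constraint) or $T$-stable irreducible curves, each of which is smooth, normal, projective and contains exactly two $T$-fixed points, hence is isomorphic to $\PZ^1$ on which $T$ acts through a single character $\chi$, namely the weight on the tangent line at one of its endpoints.

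For such a $T$-stable curve $C \cong \PZ^1$ joining $x_i$ and $x_j$, I would compute the pullback $i^\ast_C \colon \Omega^\ast_T(\PZ^1)_\Q \to S(T)_\Q \oplus S(T)_\Q$ explicitly. The $T$-filtrable structure $\emptyset \subset \{x_j\} \subset \PZ^1$, together with the general behaviour of $\Omega^\ast_T$ on $T$-filtrable varieties worked out in \cite{CobTorus}, shows that $\Omega^\ast_T(\PZ^1)_\Q$ is a free $S(T)_\Q$-module of rank two with basis $\{[\Id_{\PZ^1}], [x_i]\}$. Functoriality gives $i^\ast_C[\Id_{\PZ^1}] = (1,1)$. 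The equivariant self-intersection formula gives $i^\ast_{x_i}[x_i] = c_1^T(L_\chi)$ (the tangent line at $x_i$ has weight $\chi$), while disjointness of supports gives $i^\ast_{x_j}[x_i] = 0$; hence $i^\ast_C[x_i] = (c_1^T(L_\chi), 0)$. The image is therefore the $S(T)_\Q$-submodule of $S(T)_\Q^{\oplus 2}$ generated by $(1,1)$ and $(c_1^T(L_\chi),0)$, which coincides with $\{(f_i,f_j) : f_i \equiv f_j \bmod c_1^T(L_\chi)\}$. Intersecting these congruences as $C$ ranges over all $T$-stable curves of $X$ yields the stated description.

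The main technical obstacle is the $\PZ^1$ computation, specifically the freeness of $\Omega^\ast_T(\PZ^1)_\Q$ of rank two and the equivariant self-intersection formula. Both can be extracted from Theorem \ref{Relation_In_Equiv_Cobordism_FGL} applied to a coordinate function on $\PZ^1$ (whose pole at $x_j$ produces the usual formal-group-law correction terms), together with the observation that $[-1]_{F_\La} c_1^T(L_\chi) / c_1^T(L_\chi)$ is a unit in $S(T)_\Q$, so the FGL corrections disappear once the image is described as an $S(T)_\Q$-submodule. This rational feature is precisely what allows the equivariant cobordism computation to mirror the classical Chow counterpart proved by Brion in \cite{BrionTorusActions}.
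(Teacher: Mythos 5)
The paper does not prove this statement: it is quoted verbatim from \cite[Theorem 7.8]{CobTorus}, so there is no internal proof to compare against. Your argument is nevertheless correct and is exactly the standard localisation proof — reduce via Theorem \ref{EquivCobIntersectionImages} to the one-dimensional components of the $X^{T'}$, identify each such component with a $T$-stable $\PZ^1$ (smoothness of $X^{T'}$ forces each connected one-dimensional component to be a single irreducible curve), and compute the image of $\Omega^\ast_T(\PZ^1)_\Q\to S(T)^2_\Q$ from the Bialynicki--Birula generators together with the self-intersection formula. This is also precisely the template the paper itself follows when it extends the result to the two-dimensional components in Proposition \ref{Brion 7.2.Cob} and Theorem \ref{EquivCobThmSpherical}, and your closing remark that $[-1]_{F_\La}c_1^T(L_\chi)/c_1^T(L_\chi)$ is a unit correctly explains why the formal-group-law corrections do not alter the congruence condition.
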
 
\section{Equivariant cobordism of spherical varieties}
Throughout this section let $G$ be a connected reductive group, $B\subseteq G$ a Borel subgroup and $T\subseteq B$ a maximal torus. Recall that a normal $G$-variety $X$ containing a dense $B$-orbit is called \textbf{spherical}. This section is based on \cite{BrionTorusActions}.     
\begin{defi}
	A subtorus $T'\subseteq T$ is \textbf{regular} if its centraliser 
	\begin{align*}
	C_G(T')=\{g\in G\ \vert \ gt'=t'g\text{ for all }t'\in T'\}
	\end{align*}is equal to the torus $T$. If this is not the case, we call the subtorus $T'$ \textbf{singular}.
\end{defi}
\begin{bem}\label{PositiveRootsSubtorus}
	Following \cite[Corollary B, Section 26.2]{Humphreys_Linear_alg_grps} a subtorus $T'$ of codimension one is singular if and only if it is the identity component of the kernel of some positive root $\alpha$ of $(G,T)$. In this case we will write $T'=\Ker(\alpha)^0$. Then $\alpha$ is unique and the group $C_G(T')$ is the product of $T'$ with a subgroup $S(\alpha)\subseteq G$ isomorphic to $\SL_2$ or to $\PSL_2$. Then the fixed point locus $X^{T'}$ is equipped with an action of
	\begin{align*}
	C_G(T')/T'=T'S(\alpha)/T'=S(\alpha)/(S(\alpha)\cap T')=\SL_2\text{ or }\PSL_2
	\end{align*}
	since $S(\alpha)\cap T'$ is either of order one or two. Furthermore, we have $T=T'S_m(\alpha)$ for a maximal subtorus $S_m(\alpha)$ of $S(\alpha)$, the image of the coroot of $\alpha$. As above, $T'\cap S_m(\alpha)$ is a finite group $F(\alpha)$ of order one or two. Clearly, $F(\alpha)$ acts trivially on $X^{T'}$ and hence the torus action on $X^{T'}$ is in fact the action through the corresponding quotient $T/F(\alpha)\cong (T'\times S_m(\alpha))/(F(\alpha)\times F(\alpha))$.  
\end{bem}
	In the following proposition, we will analyse the components of the fixed point subschemes $X^{T'}$ for regular and singular codimension one subtori $T'\subseteq T$. Recall that the surface $\mathbb{F}_n=\PZ(\mathcal{O}_{\PZ^1}\oplus \mathcal{O}_{\PZ^1}(n))$ is called the $n$-th Hirzebruch surface. 

\begin{thm}\cite[Proposition 7.1]{BrionTorusActions}\label{componentsX^T'}
	Let $X$ be a spherical $G$-scheme and let $T'\subseteq T$ be a subtorus of codimension one.   
	\begin{enumerate}[(i)]
		\item Each irreducible component of $X^{T'}$ is a spherical $C_G(T')$-variety.
		\item If $T'$ is regular, then the fixed point set $X^{T'}$ is at most one-dimensional.
		\item If $T'$ is singular, then $X^{T'}$ is at most two-dimensional. Furthermore, any two-dimensional connected component of $X^{T'}$ is either a Hirzebruch surface $\F_n$ where $C_G(T')$ acts through the natural action of $\SL_2$, or the projective plane $\PZ^2$ where $C_G(T')$ acts through the projectivization of a non-trivial $\SL_2$-module of dimension three. 
	\end{enumerate}
\end{thm}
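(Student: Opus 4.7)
The approach is to bootstrap everything from the defining property of sphericality---namely that $X$ has only finitely many $B$-orbits---and then reduce parts (ii) and (iii) to dimension counts for the group acting effectively on $X^{T'}$. Let $L := C_G(T')$, which is a Levi subgroup of a parabolic $P \supseteq B$ with unipotent radical $U$; then $B_L := B \cap L$ is a Borel of $L$ and $B = B_L \cdot U$.

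For (i), since $L$ is connected it fixes each irreducible component $Y$ of $X^{T'}$ setwise. The crucial step is the identity $(B \cdot y) \cap X^{T'} = B_L \cdot y$ for every $y \in Y$. To prove it I would exploit that every weight of $T$ on $U$ is a root of $(G,T)$ not lying in $L$, hence non-trivial on the codimension-one subtorus $T'$; this forces $U^{T'} = \{1\}$ and, together with the decomposition $B = B_L \cdot U$, yields the equality. Combined with the finiteness of $B$-orbits in $X$, this shows that $Y$ carries only finitely many $B_L$-orbits and thus has a dense one, proving that $Y$ is a spherical $L$-variety.

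For (ii), regularity gives $L = T$, so $Y$ is a toric variety; since $T'$ acts trivially on $Y$ the effective torus $T/T'$ has rank one, giving $\dim Y \leq 1$. For the dimension bound in (iii), Remark \ref{PositiveRootsSubtorus} identifies $L/T'$ with $\SL_2$ or $\PSL_2$, whose Borel has dimension $2$, so sphericality of $Y$ forces $\dim Y \leq 2$. The main obstacle is the classification of two-dimensional components in (iii): one must show that a smooth projective surface which is spherical under $\SL_2$ or $\PSL_2$ is either $\PZ^2$---necessarily with the action induced by the projectivization of an irreducible three-dimensional representation, since any non-trivial $\SL_2$-action on $\PZ^2$ arises from a three-dimensional linear representation---or a Hirzebruch surface $\F_n$ with $\SL_2$ acting naturally on the base $\PZ^1 = \SL_2/B$. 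This classification goes back to classical work on equivariant $\SL_2$-embeddings (Luna--Vust theory) and is the input imported from Brion's proof of \cite[Proposition 7.1]{BrionTorusActions}.
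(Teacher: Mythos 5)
The paper does not prove this result; it is cited verbatim from \cite[Proposition 7.1]{BrionTorusActions}, so there is no in-paper argument to compare against. Your outline reconstructs the natural argument and, to the best of my knowledge, the one Brion uses. It is essentially correct, but one step in (i) is stated too quickly. The identity $(B\cdot y)\cap X^{T'}=B_L\cdot y$ does not follow formally from $U^{T'}=\{1\}$ together with $B=B_LU$. Writing $b=b_Lu$ and using that $B_L$ centralises $T'$, the nontrivial inclusion reduces to $(Uy)^{T'}=\{y\}$, and $U^{T'}=\{1\}$ alone does not control the $T'$-fixed locus of the quotient variety $Uy\cong U/U_y$ (for a non-linearly-reductive group this can genuinely fail). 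You need one of the standard repairs: either the vanishing $H^1(T',U_y)=1$ for a torus acting on a unipotent group, which makes $U^{T'}\to (U/U_y)^{T'}$ surjective; or linearise the $T'$-action on the affine space $Uy$ at the fixed point $y$ and note that $T_y(Uy)\cong\operatorname{Lie}(U)/\operatorname{Lie}(U_y)$ has no zero $T'$-weight. With that supplied, each $B$-orbit meets $X^{T'}$ in exactly one $B_L$-orbit or not at all, and (i) follows.

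Parts (ii) and (iii) via the dimension of a Borel of $C_G(T')$ are exactly right, and the classification of the two-dimensional components in (iii) is indeed the external input from the Luna--Vust theory of $\SL_2$-embeddings, as you say and as Brion also invokes. One caveat worth recording: that classification yields $\F_n$ and $\PZ^2$ only for smooth complete surfaces, which is automatic once $X$ is smooth and projective (the only case the present paper uses), but is not literally implied by the bare hypothesis ``spherical $G$-scheme'' as written in the statement.
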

In this section we want to generalise the presentations of the equivariant Chow rings of smooth projective spherical $G$-varieties (cf. \cite[Theorem 7.3]{BrionTorusActions}) to equivariant algebraic cobordism. In order to be able to generalise those, we need to compute the equivariant algebraic cobordism of the projective plane and the Hirzebruch surfaces according to Proposition \ref{componentsX^T'} and  Proposition \ref{EquivCobIntersectionImages}.
Using notation as in Proposition \ref{Krishna7.8}, we now can formulate the main result of this section which is the analogue of \cite[Theorem 7.3]{BrionTorusActions} and which will be proved later on in this section.
\begin{sat}\label{EquivCobThmSpherical}
	For any smooth projective and spherical $G$-variety $X$, the pullback map 
	\begin{align*}
	i^\ast:\Omega^\ast_T(X)_\Q\to \Omega^\ast_T(X^T)_\Q
	\end{align*}
	is injective. Moreover, the image of $i^\ast$ consists of all families $(f_x)_{x\in X^T}$ such that
	\begin{enumerate}[(i)]
		\item $f_x\equiv f_y\mod c_1^T(L_\chi)$ whenever $x$ and $y$ are connected by a $T$-stable curve where $T$ acts through the weight $\chi$.
		\item $(f_x-f_y)+\rho_{1/2}c_1^T(L_{\alpha})(f_z-f_x)\equiv 0\mod c_1^T(L_{\alpha})^2$ whenever $\alpha$ is a positive root of $G$ relative to $T$, $x,y$ and $z$ lie in a connected component of $X^{\Ker(\alpha)^0}$ isomorphic to a projective plane $\PZ^2$ and $x\geq y\geq z$ are ordered by their corresponding weights.
		\item $f_w-f_x-f_y+f_z\equiv 0\mod c_1^T(L_{\alpha})^2$ whenever $\alpha$ is a positive root of $G$ relative to $T$, $w,x,y$ and $z$ lie in a connected component of $X^{\Ker(\alpha)^0}$ isomorphic to $\F_0$ and $w\geq x, y \geq z$ are ordered by their corresponding weights. 
		\item $\rho_{n/2}c_1^T(L_{\alpha})(f_y-f_z)+\rho_{-n/2}c_1^T(L_{\alpha})(f_w-f_x)\equiv 0\mod c_1^T(L_{\alpha})^2$ whenever $\alpha$ is a positive root of $G$ relative to $T$, $w,x,y$ and $z$ lie in a connected component of $X^{\Ker(\alpha)^0}$ isomorphic to a rational ruled surface $\F_n$, $n\geq 1$, and $w\geq x\geq y \geq z$ are ordered by their corresponding weights. 
	\end{enumerate}
\end{sat}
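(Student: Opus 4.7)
The plan is to apply the intersection-of-images localization theorem (Proposition \ref{EquivCobIntersectionImages}) and then, one codimension-one subtorus $T' \subseteq T$ at a time, analyse which families $(f_x)_{x \in X^T}$ actually arise from $\Omega^\ast_T(X^{T'})_\Q$. First I would note that smooth projective spherical $G$-varieties have only finitely many $T$-fixed points (a classical fact for spherical varieties) and are $T$-filtrable by Proposition \ref{BB_smooth_projective}, so the hypotheses of Proposition \ref{EquivCobIntersectionImages} are satisfied. This immediately gives the injectivity of $i^\ast$ and identifies its image with the intersection, over all codimension-one subtori $T' \subseteq T$, of the images of $i^\ast_{T'} \colon \Omega^\ast_T(X^{T'})_\Q \to \Omega^\ast_T(X^T)_\Q$.

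Next I would invoke Proposition \ref{componentsX^T'} to split into two cases. When $T'$ is regular, every irreducible component of $X^{T'}$ is at most one-dimensional, so it is either a fixed point or a $T$-stable rational curve; the latter contribute precisely the congruences modulo $c_1^T(L_\chi)$ of condition (i), by the componentwise argument of Proposition \ref{Krishna7.8}. When $T'$ is singular we have $T' = \Ker(\alpha)^0$ for a unique positive root $\alpha$ of $(G,T)$, and each two-dimensional irreducible component of $X^{T'}$ is isomorphic to one of $\PZ^2$, $\F_0$, or $\F_n$ with $n \geq 1$, carrying the $C_G(T')/T'$-action described in Remark \ref{PositiveRootsSubtorus}. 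One-dimensional components, as well as the $T$-stable curves lying inside these surfaces, again feed into condition (i).

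The core of the proof is therefore to determine the image of $i^\ast$ from $\Omega^\ast_T(S)_\Q$ to the fixed-point ring of $S$, for each of the three surface types $S \in \{\PZ^2, \F_0, \F_n\}$ equipped with the action from Remark \ref{PositiveRootsSubtorus}. I would (a) apply Lemma \ref{IsomorphismCobordism T and T/F} to replace $T$ by the quotient $T/F(\alpha)$, which acts effectively through a rank-two torus; (b) use the $T$-filtrable structure of $S$ to list explicit $S(T)_\Q$-module generators of $\Omega^\ast_T(S)_\Q$, namely the equivariant fundamental classes of the closures of the Bialynicki--Birula cells; and (c) apply Theorem \ref{Relation_In_Equiv_Cobordism_FGL} together with Corollary \ref{Relation_We_actually_use} to compute how these generators restrict to each fixed point of $S$. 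Expanding the universal formal group law $F_\La$ in those restrictions and reducing modulo $c_1^T(L_\alpha)^2$ should yield precisely the conditions (ii), (iii), and (iv). Sufficiency in each case follows because the chosen generators already span $\Omega^\ast_T(S)_\Q$ freely over $S(T)_\Q$, so the conditions obtained this way already exhaust the image.

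The hard part will be the $\F_n$ case with $n \geq 1$: the $\PSL_2$-weights governing the $T$-stable curves of $\F_n$ differ by integer multiples of $\alpha/2$, so the FGL relation has to be evaluated at arguments of the form $[\pm n]_{F_\La}\bigl([1/2]_{F_\La} c_1^T(L_\alpha)\bigr)$, which forces the operators $\rho_{\pm n/2}$ of Definition \ref{Reminder_Quotient} to appear in the final formula. Tracking the nonlinear $a_{ij}$-contributions coming from $F_\La$ and checking that they assemble precisely into $\rho_{n/2}$ and $\rho_{-n/2}$ acting on $c_1^T(L_\alpha)$ is the delicate step; all remaining cases reduce to formal manipulations with the FGL relation and direct applications of the earlier-cited results.
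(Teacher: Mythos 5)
Your proposal follows essentially the same route as the paper: localisation via Proposition \ref{EquivCobIntersectionImages}, the component classification of Proposition \ref{componentsX^T'}, Proposition \ref{Krishna7.8} for the at-most-one-dimensional components, and a Bialynicki--Birula/section-relation/formal-group-law computation for the three surface types, which the paper factors out as Proposition \ref{Brion 7.2.Cob} for the rank-one torus $D$ and then transports to the full torus $T$ by Lemma \ref{IsomorphismCobordism T and T/F} together with a K\"unneth-type base change. The one point you should flesh out is the sufficiency direction: that every family satisfying (ii)--(iv) lies in the $S(T)_\Q$-span of the restricted generators is not automatic from the generators spanning freely, but requires the explicit division-by-Chern-class computations carried out in the proof of Proposition \ref{Brion 7.2.Cob}.
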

\begin{bem}\label{Condition (i)_Theorem}
	We will see later in the proof that condition (i) in the preceding proposition comes from Proposition \ref{Krishna7.8}. Further, the formulation of the equations in the conditions (ii) and (iv) slightly differs from the one in Brion's description. We need to introduce the terms $\rho_{n/2}$, $n\in \Z\setminus\{0\}$, from Definition \ref{Reminder_Quotient} because of the universal formal group law in cobordism. Besides that, as opposed to the formulation of Brion, we have to distinguish between the cases $\F_0$ and $\F_n$, $n\geq 1$, again due to the universal formal group law. In the case of a smooth projective spherical $G$-variety $X$, the theorem is a generalisation of Proposition \ref{Krishna7.8} because the cases (ii)-(iv) do not occur if the variety has only finitely many $T$-stable curves. 
\end{bem}
Now we want to compute equivariant cobordism for projective planes and Hirzebruch surfaces. Therefore, we describe the irreducible components of $X^{T'}$ for singular codimension one subtori $T'$ coming from Proposition \ref{componentsX^T'} in some more detail.

We start with the description of the $T$-fixed points in $X^{T'}$. Let $D$ be the torus of diagonal matrices in $\SL_2$ and let $\alpha$ be the positive root. At first, we want to consider the two cases of $\PZ(V)$ for a non-trivial $\SL_2$-module $V$ of dimension three. Set $V_{n+1}:=\Sym^{n+1}(k^2)$. Let $V=V_0\oplus V_1$ be the first non-trivial $\SL_2$-module of dimension three. The weights of $D$ in $V$ are $-\alpha/2,0$ and $\alpha/2$ with the induced group action of $D$ on $V$. We denote by $x,y$ and $z$ the corresponding fixed points of $D$ in $\PZ(V)$. To be more explicit, the fixed points $x=[1:0:0],y=[0:1:0]$ and $z=[0:0:1]$ correspond to the weights $\alpha/2,0,-\alpha/2$, respectively. Thus, we identify $\Omega^\ast_D(\PZ(V)^D)_\Q$ with $S(D)_\Q^3$. 

Similarly, for the second non-trivial $\SL_2$-module $V=V_2=\mathfrak{sl_2}$ of dimension three, the corresponding weights are $\alpha,0$ and $-\alpha$ where the corresponding fixed points are again $x=[1:0:0], y=[0:1:0]$ and $z=[0:0:1]$, respectively. 

Next, we consider the case $\F_0=\PZ^1\times \PZ^1$ with $D$-action given by 
\begin{align*}
	d\cdot ([a:b],[u:v])=([da:d^{-1}b],[du:d^{-1}v]).
\end{align*} We denote by $w$ and $z$ the $D$-fixed points $([1:0],[1:0])$ and $([0:1],[0:1])$, respectively. Further, we denote the remaining two $D$-fixed points $([1:0],[0:1])$ and $([0:1],[1:0])$ by $x$ and $y$, respectively. 

Lastly, we have a look at the rational ruled surfaces $\F_n$, $n\geq 1$, which is the closure of the $\SL_2$-orbit $\SL_2\cdot [v_1+v_{n+1}]$ in $\PZ(V)$ for $V:=V_1\oplus V_{n+1}$ where $v_1\in V_1$ and $v_{n+1}\in V_{n+1}$ denote the two highest weight vectors, respectively.  We recall that $\F_n$ has four $D$-fixed points $w,x,y$ and $z$ with corresponding weights $(n+1)\alpha/2, \alpha/2, -\alpha/2$ and $-(n+1)\alpha/2$, respectively, by the induced $D$-action on $\F_n$. Therefore, we can identify $\Omega^\ast_D(\F_n^D)_\Q$ with $S(D)_\Q^4$.    
\begin{thm}\label{Brion 7.2.Cob}
	Let $X$ be a Hirzebruch surface $\F_n$ or a projective plane $\PZ(V)$ as above. 
	\begin{enumerate}[(i)]
		\item The image of the pullback
		\begin{align*}
			i^\ast: \Omega^\ast_D(\F_n)_\Q\to S(D)^4_\Q
		\end{align*}
		consists of all $(f_w,f_x,f_y,f_z)\in S(D)^4_\Q$ such that 
		\begin{align*}
			f_w\equiv f_x\equiv f_y\equiv f_z&\mod c_1^D(L_{\alpha})\text{ and }\\
			f_w-f_x-f_y+f_z\equiv 0&\mod c_1^D(L_{\alpha})^2
		\end{align*}
		hold for $n=0$ and of all $(f_w,f_x,f_y,f_z)\in S(D)^4_\Q$ such that
		\begin{align*}
			f_w\equiv f_x\equiv f_y\equiv f_z&\mod c_1^D(L_{\alpha})\text{ and }\\
			\rho_{n/2}c_1^D(L_{\alpha})(f_y-f_z)+\rho_{-n/2}c_1^D(L_{\alpha})(f_w-f_x)\equiv 0&\mod c_1^D(L_{\alpha})^2
		\end{align*}
		hold for $n\geq 1$.
		\item Moreover, the image of 
		\begin{align*}
			\Omega^\ast_D(\PZ(V))_\Q\to S(D)^3_\Q
		\end{align*}
		consists of all $(f_x,f_y,f_z)$ such that 
		\begin{align*}
			f_x\equiv f_y\equiv f_z&\mod c_1^D(L_{\alpha})\text{ and }\\
			(f_x-f_y)+\rho_{1/2}c_1^D(L_{\alpha})(f_z-f_x)\equiv 0&\mod c_1^D(L_{\alpha})^2
		\end{align*}
		hold.
	\end{enumerate} 
\end{thm}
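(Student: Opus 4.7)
The plan is to combine the injectivity provided by the localisation \ref{EquivCobIntersectionImages} with explicit divisor-class computations coming from Corollary \ref{Relation_We_actually_use} and Theorem \ref{Relation_In_Equiv_Cobordism_FGL}. For injectivity of $i^\ast$, note that $D$ has rank one, so the only codimension-one subtorus is the trivial group with $X^{\{1\}}=X$, and the intersection statement of \ref{EquivCobIntersectionImages} reduces to injectivity of the pullback to the (finite) fixed-point set; both $\PZ(V)$ and $\F_n$ are smooth projective $D$-varieties with three, respectively four, isolated $D$-fixed points, so this applies.

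For the first congruences modulo $c_1^D(L_\alpha)$, I would apply Proposition \ref{Krishna7.8} directly to $\PZ(V)$ (whose only $D$-stable curves are the three coordinate lines) and to $\F_n$ with $n\geq 1$ (whose only $D$-stable curves are the two sections and two fibres). For $\F_0=\PZ^1\times \PZ^1$, where the $D$-stable curves form an infinite pencil and Proposition \ref{Krishna7.8} does not directly apply, one still obtains the four pairwise congruences by applying Corollary \ref{Relation_We_actually_use} to the rational $D$-eigenfunctions supported on the four rulings of the two projections. For the higher-order congruences modulo $c_1^D(L_\alpha)^2$ I would apply Theorem \ref{Relation_In_Equiv_Cobordism_FGL} to suitable cobordism cycles $[h\colon Y\to X]$ together with rational $D$-eigenfunctions $f$ whose zero and pole divisors decompose into $D$-stable curves passing through the relevant fixed points. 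The universal formal group law in the resulting identity automatically produces the operator $\rho_{1/2}c_1^D(L_\alpha)$ in case (ii), reflecting that the three weights of $V$ are $\{\pm\alpha/2,0\}$ or $\{\pm\alpha,0\}$ and differ by half-integer multiples of $\alpha$, and the operators $\rho_{\pm n/2}c_1^D(L_\alpha)$ in case (i) for $n\geq 1$, reflecting the tangent weights $\pm(n+1)\alpha/2$ at $w,z$ versus $\pm\alpha/2$ at $x,y$.

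For the reverse containment I would argue by a rank count over $S(D)_\Q$. By \cite[Theorem 4.11]{CobTorus} the module $\Omega^\ast_D(X)_\Q$ is free of rank equal to the number of fixed points, and by writing down an explicit $S(D)_\Q$-basis of Lagrange-type interpolants at the fixed-point weights one checks that the submodule of $S(D)^3_\Q$ or $S(D)^4_\Q$ cut out by the stated congruences is likewise free of the same rank. Injectivity of $i^\ast$, combined with inclusion into a submodule of the same rank, forces equality.

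The main obstacle will be extracting the precise form of the mod $c_1^D(L_\alpha)^2$ relations from the $F_\La$-expansion that appears in the proof of Theorem \ref{Relation_In_Equiv_Cobordism_FGL}. In the Chow setting of \cite[Proposition 7.2]{BrionTorusActions}, the analogous relations are additive in the characters, whereas here the non-additivity of $F_\La$ replaces plain multiples by $[n]_{F_\La}$-multiples, and Lemma \ref{FGL_rational_coefficients_power_series} is needed to rewrite these in terms of $c_1^D(L_\alpha)$ scaled by the operators $\rho_{n/m}$ of Definition \ref{Reminder_Quotient}. A further subtlety is that weights such as $\alpha/2$ and $(n+1)\alpha/2$ are not characters of $D$ itself but of a finite cover, so one must invoke Lemma \ref{IsomorphismCobordism T and T/F} before making sense of such rational multiples with rational coefficients.
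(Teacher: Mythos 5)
There are two genuine gaps. First, your derivation of the congruences modulo $c_1^D(L_\alpha)$ rests on the claim that $\PZ(V)$ and $\F_n$ ($n\geq 1$) contain only finitely many $D$-stable curves (the coordinate lines, respectively the two sections and two fibres), so that Proposition \ref{Krishna7.8} applies. This is false: $D$ has rank one and these surfaces are two-dimensional, so the generic $D$-orbit is one-dimensional and its closure is a $D$-stable curve; there is a one-parameter family of such curves (e.g.\ the conics $\overline{\{[ta:b:t^{-1}c]\}}$ in $\PZ^2$). The failure of the finiteness hypothesis of Proposition \ref{Krishna7.8} on exactly these surfaces is the entire reason Theorem \ref{Brion 7.2.Cob} is needed, as Remark \ref{Condition (i)_Theorem} and the proof of Theorem \ref{EquivCobThmSpherical} point out. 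The paper avoids this by never invoking Proposition \ref{Krishna7.8} here: it chooses explicit affine $D$-stable charts and eigenfunctions, computes via Corollary \ref{Relation_We_actually_use} the pullbacks at each fixed point of the closures of the Bialynicki--Birula cells, and reads off all congruences (both modulo $c_1^D(L_\alpha)$ and modulo $c_1^D(L_\alpha)^2$) from these explicit tuples. Your treatment of the mod-square relations is likewise only a sketch; the paper gets them for free from the same explicit generator images.

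Second, your reverse containment by rank count does not work. Over $S(D)_\Q$ a free submodule of the same rank as the ambient free module need not be the whole module (compare $2\Z\subset\Z$); equality of ranks only gives equality after passing to the fraction field, which loses exactly the integrality information the congruences encode. What is actually needed, and what the paper does, is to observe that the images of the Bialynicki--Birula cell closures generate the image of $i^\ast$ as an $S(D)_\Q$-module (by \cite[Corollary 4.8]{CobTorus}) and then to decompose an arbitrary tuple satisfying the congruences explicitly as an $S(D)_\Q$-linear combination of these generators, checking that each successive coefficient is a genuine element of $S(D)_\Q$ precisely because of the stated congruences (together with the observation that divisibility by $c_1^D(L_\alpha)$ implies divisibility by $c_1^D(L_{n\alpha/m})$ rationally). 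Your remarks on the role of $\rho_{n/m}$, Lemma \ref{FGL_rational_coefficients_power_series} and Lemma \ref{IsomorphismCobordism T and T/F} for half-integral weights correctly identify where the formal group law enters, but they do not substitute for this constructive step.
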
 
\begin{bem}
	In the above statement, the is one equation more than needed to keep the symmetry in the arguments. For example, in the $\F_n$ case we could remove the equation $f_w\equiv f_x \mod c_1^D(L_\alpha)$. 
\end{bem}
\begin{proof}
	We first consider the case of $\PZ(V)$ for $V=V_0\oplus V_1$. Since $i^\ast$ is a ring homomorphism, the class $[\PZ(V)\to \PZ(V)]$ maps to $(1,1,1)$. Now we want to compute the images of the closures of the Bialynicki-Birula cells, i.e. the images of the equivariant fundamental classes of the $D$-stable cobordism cycles $[(yz)\to \PZ(V)]$ and $[z\to \PZ(V)]$. We have a look at the pullback 
	\begin{align*}
	i^\ast[Y\to \PZ(V)]=(i^\ast_x[Y\to \PZ(V)],i^\ast_y[Y\to \PZ(V)],i^\ast_z[Y\to \PZ(V)])
	\end{align*}where $i^\ast_x[Y\to \PZ(V)]$ denotes the pullback of the class $[Y\to \PZ(V)]$ under the inclusion $i_x$ of the corresponding fixed point in $\PZ(V)$. To compute $i_z^\ast[Y\to \PZ(V)]$ we can replace $\PZ(V)$ by any open $D$-stable neighbourhood $U_z$ of $z$. In this case we choose $U_z$ to be the affine chart of $\PZ(V)$ in which the coordinate associated to $z$ does not vanish. We introduce the coordinates $a,b$ and $c$ for $V$ such that our coordinates for $U_z$ become $a/c$ and $b/c$. Therefore, $D$ acts linearly on $U_z$ with weights $\alpha$ and $\alpha/2$. We choose $f(a/c,b/c)=a/c$ which is an eigenfunction of $D$ with respect to weight $\alpha$. In this situation, we can apply Corollary \ref{Relation_We_actually_use} because $f$ defines a global section $s$ which is transverse to the zero section with zero-subscheme $Z_0=(yz)\cap U_z$. Thus, we know that
	\begin{align*}
	[(yz)\cap U_z\to U_z]=c_1^D(L_{\alpha})[\Spec k\to \Spec k]\cdot [U_z\to U_z]
	\end{align*}holds in $\Omega^\ast_D(U_z)_\Q$. Pulling back to $\Omega^\ast_D(z)_\Q$ yields $i^\ast_z[(yz)\cap U_z\to U_z]=c_1^D(L_{\alpha})$. We can apply the same argument for the pullback $i^\ast_y[(yz)\to \PZ(V)]$ by choosing $U_y$ to be the open affine neighbourhood of $y$ such that the coordinate associated to $y$ does not vanish. Thus, $D$ acts linearly on $U_y$ with weights $\alpha/2$ and $-\alpha/2$. We take $f(a/b,c/b)=a/b$ which is an eigenfunction of $D$ with respect to weight $\alpha/2$. Therefore, we conclude $i^\ast_y[(yz)\cap U_y]=c_1^D(L_{\alpha/2})$ by the same argument as above.
	
	Finally, we consider the last closure of the Bialynicki-Birula cells, i.e. the point $z$.  
	Clearly, $z$ is the complete intersection of the two lines $(yz)$ and $(xz)$. 
	Therefore, we want to compute the pullback of $[(xz)\cap (yz)\cap U_z\to U_z]=[z\to U_z]$. We want to apply the same argument again using the relation 
	\begin{align*}
	c_1^D(L_{\alpha/2})\cdot [(yz)\cap U_z\to U_z]=[(xz)\cap (yz)\cap U_z\to U_z]=[z\to U_z]
	\end{align*} 
	from Corollary \ref{Relation_We_actually_use} where $z=(xz)\cap (yz)\cap U_z$ is the zero-subscheme of the section defined by the eigenfunction $g(a/c,b/c)=b/c$ of $D$ with respect to weight $\alpha/2$ on $(yz)\cap U_z$. Using the equality $i^\ast_z[(yz)\cap U_z\to U_z]=c_1^D(L_{\alpha})$, we obtain the pullback
	\begin{align*}
	i^\ast_z[z\to U_z]=c_1^D(L_{\alpha/2})\cdot i^\ast_z[(yz)\cap U_z\to U_z]=c_1^D(L_{\alpha/2})\cdot c_1^D(L_{\alpha})
	\end{align*}  
	in $\Omega^\ast_D(z)_\Q$. The images of the $D$-stable cobordism cycles coming from the closures of the Bialynicki-Birula decomposition generate the equivariant cobordism ring by \cite[Corollary 4.8]{CobTorus}. Therefore, the image of the pullback $i^\ast: \Omega^\ast_D(\PZ(V))_\Q\to S(D)^3_\Q$ is generated by the images 
	\begin{align*}
	[\PZ(V)\to \PZ(V)]&\mapsto (1,1,1)\\
	[(yz)\to \PZ(V)]&\mapsto (0,c_1^D(L_{\alpha/2}),c_1^D(L_{\alpha}))\\
	[z\to \PZ(V)]&\mapsto (0,0,c_1^D(L_{\alpha/2})c_1^D(L_{\alpha})).
	\end{align*}
	These images satisfy the given equations which can be seen by again expressing $c_1^D(L_{\alpha/2})$ as a formal power series in the variable $c_1^D(L_\alpha)$ with rational coefficients. For the following computation and similar ones upcoming in the sequel of this proof, we remark that any element which is divisible by $c_1^D(L_{\alpha})$ will be also divisible by $c_1^D(L_{n\alpha/m})$ for $m,n\in \Z\setminus \{0\}$ because we can again express the first Chern class in terms of the second one and factor out. Therefore, for an element $(f_x,f_y,f_z)\in S(T)_\Q^3$ satisfying the given equations we have
	\begin{align*}
	(f_x,f_y,f_z)&=f_x(1,1,1)+(0,f_y-f_x,f_z-f_x)\\
	&=f_x(1,1,1)+\frac{f_y-f_x}{c_1^D(L_{\alpha/2})}(0,c_1^D(L_{\alpha/2}),c_1^D(L_\alpha))\\
	&+(0,0,(f_x-f_y)\frac{c_1^D(L_\alpha)}{c_1^D(L_{\alpha/2})}+f_z-f_x)\\
	&=f_x(1,1,1)+\frac{f_y-f_x}{c_1^D(L_{\alpha/2})}(0,c_1^D(L_{\alpha/2}),c_1^D(L_\alpha))\\
	&+\left(0,0,c_1^D(L_{\alpha/2})c_1^D(L_{\alpha})\left(\frac{(f_x-f_y)c_1^D(L_\alpha)+c_1^D(L_{\alpha/2})(f_z-f_x)}{c_1^D(L_{\alpha/2})^2c_1^D(L_\alpha)}\right)\right)\\
	&=f_x(1,1,1)+\frac{f_y-f_x}{c_1^D(L_{\alpha/2})}(0,c_1^D(L_{\alpha/2}),c_1^D(L_\alpha))\\
	&+\frac{(f_x-f_y)c_1^D(L_\alpha)+c_1^D(L_{\alpha/2})(f_z-f_x)}{c_1^D(L_{\alpha/2})^2c_1^D(L_\alpha)}(0,0,c_1^D(L_{\alpha/2})c_1^D(L_{\alpha}))
	\end{align*} 
	which completes the proof in the case $V=V_0\oplus V_1$.
	
	The computation for $V=V_2$ can be done similarly. We obtain
	\begin{align*}
	i^\ast: \Omega^\ast_D(\PZ(V))_\Q&\to S(D)^3_\Q\\
	[\PZ(V)\to \PZ(V)]&\mapsto (1,1,1)\\
	[(yz)\to \PZ(V)]&\mapsto (0,c_1^D(L_\alpha),c_1^D(L_{2\alpha}))\\
	[z\to \PZ(V)]&\mapsto (0,0,c_1^D(L_\alpha)c_1^D(L_{2\alpha}))
	\end{align*}
	which satisfy the given equations using the properties of the formal group law. Again, we obtain 
	\begin{align*}
	(f_x,f_y,f_z)&=f_x(1,1,1)+(0,f_y-f_x,f_z-f_x)\\
	&=f_x(1,1,1)+\frac{f_y-f_x}{c_1^D(L_{\alpha})}(0,c_1^D(L_{\alpha}),c_1^D(L_{2\alpha}))\\
	&+(0,0,(f_x-f_y)\frac{c_1^D(L_{2\alpha})}{c_1^D(L_{\alpha})}+f_z-f_x)\\
	&=f_x(1,1,1)+\frac{f_y-f_x}{c_1^D(L_{\alpha})}(0,c_1^D(L_{\alpha}),c_1^D(L_{2\alpha}))\\
	&+\left(0,0,c_1^D(L_{\alpha})c_1^D(L_{2\alpha})\left(\frac{(f_x-f_y)c_1^D(L_{2\alpha})+c_1^D(L_{\alpha})(f_z-f_x)}{c_1^D(L_{\alpha})^2c_1^D(L_{2\alpha})}\right)\right)\\
	&=f_x(1,1,1)+\frac{f_y-f_x}{c_1^D(L_{\alpha})}(0,c_1^D(L_{\alpha}),c_1^D(L_{2\alpha}))\\
	&+\frac{(f_x-f_y)c_1^D(L_{2\alpha})+c_1^D(L_{\alpha})(f_z-f_x)}{c_1^D(L_{\alpha})^2c_1^D(L_{2\alpha})}(0,0,c_1^D(L_{\alpha})c_1^D(L_{2\alpha}))
	\end{align*}
	which completes the proof for $V=V_2$ since the last coefficient is well-defined using the properties of the formal group law and the given equations. More precisely, the quotient $c_1^D(L_\alpha)/c_1^D(L_{2\alpha})$ has the same coefficients as $\rho_{1/2}c_1^D(L_\alpha)$ and the only difference will be the variable $c_1^D(L_{2\alpha})$ in the first quotient as opposed to $c_1^D(L_\alpha)$ in the second one. As we consider the reduction modulo $c_1^D(L_\alpha)^2$, we only need to take the first two summands of $c_1^D(L_\alpha)/c_1^D(L_{2\alpha})$ into account. Therefore, $c_1^D(L_\alpha)/c_1^D(L_{2\alpha})$ differs from $\rho_{1/2}c_1^D(L_\alpha)$ only by a factor of two in the second summand. Their difference contains a factor $c_1^D(L_\alpha)$ which will be multiplied by $(f_z-f_x)$. This product vanishes modulo $c_1^D(L_\alpha)^2$ because of the first equation and thus, we reduced the coefficient to the known equation $(f_x-f_y)+\rho_{1/2}c_1^D(L_{\alpha})(f_z-f_x)$ which finishes the argument.  
	
	Next, we consider the case $\F_0=\PZ^1\times \PZ^1$ for which we choose $U_w$ to be an open $D$-stable neighbourhood of $w=([1:0];[1:0])$. We get $(t^{-2}b/a,t^{-2}v/u)$ for coordinates $([a:b];[u:v])$ which implies that $D$ acts linearly on $U_w$ with weight $-\alpha$. The class $[\F_0]\in \Omega^\ast_D(\F_0)_\Q$ again maps to $(1,1,1,1)$ and we want to compute the remaining images of the closures of the Bialynicki-Birula cells.
	
	Therefore, we take the closure $(wx)$ of one of the remaining Bialynicki-Birula cells. We choose $f(b/a,v/u)=b/a$ to be an eigenfunction of $D$ with respect to weight $-\alpha$. By Corollary \ref{Relation_We_actually_use} we obtain
	\begin{align*}
	[(wx)\cap U_w\to U_w]=c_1^D(L_{-\alpha})[\Spec k\to \Spec k]\cdot [U_w\to U_w]
	\end{align*}
	in $\Omega^\ast_D(U_w)_\Q$. Pulling this relation back yields $i^\ast_w[(wx)\cap U_w\to U_w]=c_1^D(L_{-\alpha})$. With the eigenfunction $f(b/a,u/v)=b/a$ and an open $D$-stable neighbourhood $U_x$ of the fixed point $x$ we obtain $i^\ast_x[(wx)\cap U_x\to U_x]=c_1^D(L_{-\alpha})$.
	
	For the pullbacks of $(wy)$ we take the eigenfunction $f(b/a,v/u)=v/u$ of $D$ with respect to weight $-\alpha$ on the open $D$-stable $U_w$ from above, but in this case we have $V(f)=(wy)\cap U_w$ and therefore, $i^\ast_w[(wy)\cap U_w\to U_w]=c_1^D(L_{-\alpha})$.
	Similarly, we obtain $i^\ast_y[(wy)\cap U_y\to U_y]=c_1^D(L_{-\alpha})$.
	
	Lastly, we consider the pullback of the point $w$ which is again the complete intersection of $(wy)$ and $(wx)$. By the same argument as in the above cases, we get 
	\begin{align*}
	i^\ast_w[w\to U_w]&=i^\ast_w[(wx)\cap (wy)\cap U_w\to U_w]\\&=c_1^D(L_{-\alpha})\cdot i^\ast_w[(wy)\cap U_w\to U_w]\\&=c_1^D(L_{-\alpha})c_1^D(L_{-\alpha})
	\end{align*}
	whereas the other pullbacks of the class of the point $w$ vanish. We summarise that the image is given by
	\begin{align*}
	i^\ast: \Omega^\ast_D(\F_0)_\Q&\to S(D)^4_\Q\\
	[\F_0\to \F_0]&\mapsto (1,1,1,1)\\
	[(wx)\to \F_0]&\mapsto (c_1^D(L_{-\alpha}),c_1^D(L_{-\alpha}),0,0)\\
	[(wy)\to \F_0]&\mapsto (c_1^D(L_{-\alpha}),0,c_1^D(L_{-\alpha}),0)\\
	[w\to \F_0]&\mapsto (c_1^D(L_{-\alpha})c_1^D(L_{-\alpha}),0,0,0).
	\end{align*}
	which satisfies the equations. 
	
	Conversely, for an element $(f_w,f_x,f_y,f_z)\in S(T)^4_\Q$ fulfilling the conditions we have
	\begin{align*}
	(f_w,f_x,f_y,f_z)&=f_z(1,1,1,1)+(f_w-f_z,f_x-f_z,f_y-f_z,0)\\
	&=f_z(1,1,1,1)+\frac{f_y-f_z}{c_1^D(L_{-\alpha})}(c_1^D(L_{-\alpha}),0,c_1^D(L_{-\alpha}),0)\\
	&+(f_w-f_y,f_x-f_z,0,0)\\
	&=f_z(1,1,1,1)+\frac{f_y-f_z}{c_1^D(L_{-\alpha})}(c_1^D(L_{-\alpha}),0,c_1^D(L_{-\alpha}),0)\\
	&+\frac{f_x-f_z}{c_1^D(L_{-\alpha})}(c_1^D(L_{-\alpha}),c_1^D(L_{-\alpha}),0,0)+(f_w-f_x-f_y+f_z,0,0,0)\\
	&=f_z(1,1,1,1)+\frac{f_y-f_z}{c_1^D(L_{-\alpha})}(c_1^D(L_{-\alpha}),0,c_1^D(L_{-\alpha}),0)\\
	&+\frac{f_x-f_z}{c_1^D(L_{-\alpha})}(c_1^D(L_{-\alpha}),c_1^D(L_{-\alpha}),0,0)\\
	&+\frac{f_w-f_x-f_y+f_z}{c_1^D(L_{-\alpha})^2}(c_1^D(L_{-\alpha})^2,0,0,0)
	\end{align*}
	which completes the proof for the case $\F_0$.
	
	In the following, we consider the case $\F_n$ for $n\geq 1$. The class $[\F_n]\in \Omega^\ast_D(\F_n)_\Q$ is again mapped to $(1,1,1,1)$. 
	
	Now we compute the remaining pullbacks of the closures of the Bialynicki-Birula cells. We choose again an open $D$-stable neighbourhood $U_w$ of the fixed point $w=[0:0:1:0:...:0]$. The induced $D$-action on $U_w$ is given by $(t^{-n}x_0/y_0,t^{-2}y_1/y_0)$ for coordinates $[x_0:x_1:y_0:y_1:...:y_{n+1}]$ and therefore, $D$ acts linearly on $U_w$ with weights $-n\alpha/2$ and $-\alpha$. We choose $f(x_0/y_0,y_1/y_0)=y_1/y_0$ to be an eigenfunction of $D$ with respect to weight $-\alpha$. By the relations on the coordinates in $\F_n$ we obtain $V(f)=(wx)\cap U_w$ with the given notations of the $D$-fixed points. As above, we get $i^\ast_w[(wx)\cap U_w\to U_w]=c_1^D(L_{-\alpha})$. One may observe that the pullback does not depend on the choice of coordinates for $U_w$. For the point $x=[1:0:...:0]$ and a $D$-stable nighbourhood $U_x$ we choose the eigenfunction $f(x_1/x_0,y_1/x_0)=x_1/x_0$ of $D$ with respect to weight $-\alpha$ which leads to $i^\ast_x[(wx)\cap U_x\to U_x]=c_1^D(L_{-\alpha})$. 
	
	For the pullback of $(xy)$ let $U_x$ be given by coordinates $(y_0/x_0,x_1/x_0)$ and take the eigenfunction $f(y_0/x_0,x_1/x_0)=y_0/x_0$ of weight $n\alpha/2$ which leads to $V(f)=(xy)\cap U_x$ and therefore to $i^\ast_x[(xy)\cap U_x\to U_x]=c_1^D(L_{n\alpha/2})$. For the coordinates $(y_0/x_1,y_{n+1}/x_1)$ for $U_y$ and the eigenfunction $f(y_0/x_1,y_{n+1}/x_1)=y_{n+1}/x_1$ we get $V(f)=(xy)\cap U_y$ and hence $i^\ast_y[(xy)\cap U_y\to U_y]=c_1^D(L_{-n\alpha/2})$.
	
	Finally, we consider the pullback of the point $w$ by introducing an eigenfunction on $(wx)\cap U_w$. We choose $g(x_0/y_0,y_1/y_0)=x_0/y_0$ which is an eigenfunction of weight $-n\alpha/2$. This leads to $V(g)=(wz)\cap (wx)\cap U_w=w$ and thus we obtain 
	\begin{align*}
	[w\to U_w]=c_1^D(L_{-n\alpha/2})[(wx)\cap U_w\to U_w]
	\end{align*}
	in $\Omega^\ast_D(U_w)_\Q$ again by Corollary \ref{Relation_We_actually_use}. 
	
	We conclude $i_w^\ast[w\to U_w]=c_1^D(L_{-n\alpha/2})c_1^D(L_{-\alpha})$ and obtain the image
	\begin{align*}
	i^\ast: \Omega^\ast_D(\F_n)_\Q&\to S(D)^4_\Q\\
	[\F_n\to \F_n]&\mapsto (1,1,1,1)\\
	[(wx)\to \F_n]&\mapsto (c_1^D(L_{-\alpha}),c_1^D(L_{-\alpha}),0,0)\\
	[(xy)\to \F_n]&\mapsto (0,c_1^D(L_{n\alpha/2}),c_1^D(L_{-n\alpha/2}),0)\\
	[w\to \F_n]&\mapsto (c_1^D(L_{-\alpha})c_1^D(L_{-n\alpha/2}),0,0,0)
	\end{align*} 
	which satisfies the given equations. 
	
	Conversely, let $(f_w,f_x,f_y,f_z)\in S(T)^4_\Q$ be an element fulfilling the conditions. This leads to 
	\begin{align*}
	&(f_w,f_x,f_y,f_z)=f_z(1,1,1,1)+(f_w-f_z,f_x-f_z,f_y-f_z,0)\\
	&=f_z(1,1,1,1)+\frac{f_y-f_z}{c_1^D(L_{-n\alpha/2})}(0,c_1^D(L_{n\alpha/2}),c_1^D(L_{-n\alpha/2}),0)\\
	&+(f_w-f_z,\frac{(f_z-f_y)c_1^D(L_{n\alpha/2})}{c_1^D(L_{-n\alpha/2})}+f_x-f_z,0,0)\\
	&=f_z(1,1,1,1)+\frac{f_y-f_z}{c_1^D(L_{-n\alpha/2})}(0,c_1^D(L_{n\alpha/2}),c_1^D(L_{-n\alpha/2}),0)\\
	&+\left(\frac{(f_z-f_y)c_1^D(L_{n\alpha/2})+(f_x-f_z)c_1^D(L_{-n\alpha/2})}{c_1^D(L_{-n\alpha/2})c_1^D(L_{-\alpha})}\right)(c_1^D(L_{-\alpha}),c_1^D(L_{-\alpha}),0,0)\\
	&+(\frac{(f_x-f_w)c_1^D(L_{n\alpha/2})+(f_y-f_z)c_1^D(L_{-n\alpha/2})}{c_1^D(L_{-n\alpha/2})},0,0,0)\\
	&=f_z(1,1,1,1)+\frac{f_y-f_z}{c_1^D(L_{-n\alpha/2})}(0,c_1^D(L_{n\alpha/2}),c_1^D(L_{-n\alpha/2}),0)\\
	&+\left(\frac{(f_z-f_y)c_1^D(L_{n\alpha/2})+(f_x-f_z)c_1^D(L_{-n\alpha/2})}{c_1^D(L_{-n\alpha/2})c_1^D(L_{-\alpha})}\right)(c_1^D(L_{-\alpha}),c_1^D(L_{-\alpha}),0,0)\\
	&+\left(\frac{(f_y-f_z)c_1^D(L_{n\alpha/2})+c_1^D(L_{-n\alpha/2})(f_w-f_x)}{c_1^D(L_{-n\alpha/2})^2c_1^D(L_{-\alpha})}\right)(c_1^D(L_{-\alpha})c_1^D(L_{-n\alpha/2}),0,0,0)
	\end{align*}
	which completes the proof in the case $\F_n$ because of the equations and the above mentioned fact that an element which is divisible by $c_1^D(L_{\alpha})$ will be also divisible by $c_1^D(L_{n\alpha/m})$ for $m,n\in \Z\setminus\{0\}$ since we consider rational coefficients. 
\end{proof}
\begin{bem}
	The equations given in Proposition \ref{Brion 7.2.Cob} reduce to Brion's equations given in \cite[Proposition 7.2]{BrionTorusActions} for rational equivariant Chow rings. In order to be able to compute rational equivariant cobordism rings one has to consider the universal formal group law and not the additive formal group law which simplifies the computations in the Chow group case.  
\end{bem}
Next, we want to prove Theorem \ref{EquivCobThmSpherical} which is a refinement of \cite[Theorem 7.3]{BrionTorusActions}. One way to prove it would be to give explicit generators and relations describing the equivariant algebraic cobordism module as in the Chow group case (cf. \cite[Theorem 2.1]{BrionTorusActions}) which is not known at present. Luckily, we do not need such a deep result in order to be able to prove Theorem \ref{EquivCobThmSpherical}. In our situation it will be enough to use some known results on $T$-filtrable varieties and their equivariant algebraic cobordism rings. 
\begin{proof}[Proof of Theorem \ref{EquivCobThmSpherical}]\label{Proof_of_Theorem_Cobordism_Spherical}
	We want to apply Proposition \ref{EquivCobIntersectionImages} in order to compute $\Omega_T^\ast(X)_\Q$. Due to Proposition \ref{componentsX^T'} we know which fixed point subschemes $X^{T'}$ can occur and therefore, we distinguish between codimension one subtori $T'$ with $\dim X^{T'}\leq 1$ and those with $\dim X^{T'}=2$. 
	
	For a subtorus $T'$ with $\dim X^{T'}\leq 1$ there are only finitely many $T$-stable curves in $X^{T'}$ and furthermore, in the setting of a smooth projective spherical $G$-variety $X$, we have only finitely many $T$-fixed points by \cite[Lemma 2.2]{ConciniSpringer}. This implies that the assumptions of Proposition \ref{Krishna7.8} are fulfilled and thus, we can apply Proposition \ref{Krishna7.8} to $X^{T'}$ which leads to case (i).
	
	Now we consider the case where $\dim X^{T'}=2$ for which we know that $X^{T'}$ is either a projective plane or a Hirzebruch surface $\F_n$. The $T$-orbits in $X^{T'}$ are always one-dimensional and thus, the surfaces occurring in (ii)-(iv) must consist of infinitely many $T$-stable curves. For these cases we need some different results. We claim that $\Omega_\ast^T(X^{T'})_\Q\cong \Omega_\ast
	^{T/F(\alpha)}(X^{T'})_\Q$ holds where $F(\alpha)$ is given as in Remark \ref{PositiveRootsSubtorus}. We will use \cite[Theorem 4.7]{CobTorus} in order to prove our claim. This theorem states that we have an isomorphism of $S(T)$-modules $\Omega_\ast^T(X^{T'})\cong \Omega_\ast(X^{T'})[[t_1,...,t_r]]_{\gr}$ since $T$ is acting on the $T$-filtrable variety $X^{T'}$ where $r$ is the rank of $T$ and $t_i$ corresponds to $c_1^T(L_{\chi_i})$ for a chosen basis of the character group of $T$. We remark that $X^{T'}$ is also a $T/F(\alpha)$-filtrable variety as $F(\alpha)$ acts trivially on $X^{T'}$ and therefore the $T$-action factors through the $T/F(\alpha)$-action. Since $T/F(\alpha)$ is similarly a torus of rank $r$ acting with the same action on $X^{T'}$ we obtain the isomorphism $\Omega_\ast^{T/F(\alpha)}(X^{T'})\cong \Omega_\ast(X^{T'})[[t_1,...,t_r]]_{\gr}$ of $S(T/F(\alpha))$-modules where $t_i$ here corresponds to $c_1^{T/F(\alpha)}(L_{\chi_i'})$ for the corresponding basis of the character group of $T/F(\alpha)$, but as we are considering rational coefficients we have $S(T)_\Q\cong S(T/F(\alpha))_\Q$ by Lemma \ref{IsomorphismCobordism T and T/F}. This implies the claim and using the same argument for the torus $T'\times S_m(\alpha)$ we obtain
	\begin{align*}
	\Omega_\ast^T(X^{T'})_\Q&\cong \Omega_\ast^{(T'\times S_m(\alpha))/(F(\alpha)\times F(\alpha))}(X^{T'})_\Q\\
	&\cong \bigoplus_{i\in\Z}\Omega_i^{T'\times S_m(\alpha)}(X^{T'})_\Q\\
	&\cong \bigoplus_{i\in\Z}\varprojlim_{j}\Omega_i((\Spec k\times U^2_j\times X^{T'}\times U^1_j)/(T'\times S_m(\alpha)))_\Q\\
	&\cong \bigoplus_{i\in\Z}\varprojlim_{j}\Omega_i((\Spec k\times U^2_j)/T'\times (X^{T'}\times U^1_j)/S_m(\alpha))_\Q\\
	&\cong \bigoplus_{i\in\Z}\varprojlim_{j}\bigoplus_{i_1+i_2=i}\Omega_{i_1}((\Spec k\times U_j^2)/T')_\Q\otimes_{\La_\Q}\Omega_{i_2}((X^{T'}\times U^1_j)/S_m(\alpha))_\Q\\
	&\cong \bigoplus_{i\in\Z}\bigoplus_{i_1+i_2=i}\varprojlim_{j}\Omega_{i_1}((\Spec k\times U_j^2)/T')_\Q\otimes_{\La_\Q}\Omega_{i_2}((X^{T'}\times U^1_j)/S_m(\alpha))_\Q\\
	&\cong \bigoplus_{i\in\Z}\bigoplus_{i_1+i_2=i}\Omega_{i_1}^{T'}(\Spec k)_\Q\otimes_{\La_\Q}\Omega_{i_2}^{S_m(\alpha)}(X^{T'})_\Q\\
	&\cong \Omega_\ast^{T'}(\Spec k)_\Q\otimes_{\La_\Q}\Omega_\ast^{S_m(\alpha)}(X^{T'})_\Q
	\end{align*}
	where $U^1_j$ and $U^2_j$ are the corresponding parts of the sequences of good pairs $\{(V^1_j,U^1_j)\}_{j\geq 0}$ and $\{(V^2_j,U^2_j)\}_{j\geq 0}$ for $S_m(\alpha)$ and $T'$, respectively. In this case we know that $U_j^2/T'$ are products of projective spaces by the choice of good pairs in the proof of \cite[Lemma 6.1]{CobTorus}. As a product of projective spaces, the $U_j^2/T'$ are cellular which means that we can use a special version of a Künneth formula (cf. \cite[Proposition 7]{HellerMalagon-Lopez}) from line 4 to 5. The ordinary cobordism does not exist for negative degrees and therefore the inverse limit and the finite sum commute in our setting. We conclude the result by Proposition \ref{Brion 7.2.Cob}. 
\end{proof}
\begin{thm}\cite[Theorem 3.4]{CobTorus}\label{CobTorus3.4}
	Let $T$ be a torus acting on a $k$-variety $X$. Then there is an isomorphism
	\begin{align*}
	\overline{r}^T_X: \Omega^T_\ast(X)\otimes_{S(T)}\La\overset{\cong}{\longrightarrow}\Omega_\ast(X).
	\end{align*}
	If $X$ is smooth, this is an $\La$-algebra isomorphism.
\end{thm}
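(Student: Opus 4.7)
The strategy is to construct the map $\overline{r}^T_X$ from the good pair presentation of equivariant cobordism (Theorem \ref{SequenceofGoodpairs}) by restricting to a fiber, to check that it annihilates the augmentation ideal of $S(T)$, and then to establish bijectivity via a dévissage along a $T$-stable stratification.

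First I would fix a sequence $\{(V_j,U_j)\}_{j\geq 0}$ of good pairs and choose compatible representatives $u_j\in U_j$. Since $T$ acts freely on each $U_j$, the fiber of the projection $\pi_j\colon X\times^T U_j\to U_j/T$ over $[u_j]$ is canonically isomorphic to $X$, yielding a regular closed immersion $\iota_j\colon X\hookrightarrow X\times^T U_j$ of codimension $l_j-t$ where $t=\rk(T)$. Gysin pullback along $\iota_j$ induces a map on cobordism whose degree shift exactly cancels the one in the definition of equivariant cobordism, and assembling these in the inverse limit gives a degree-preserving $\La$-linear map $r^T_X\colon\Omega^T_\ast(X)\to\Omega_\ast(X)$. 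For any character $\chi$ the mixed line bundle $(L_\chi)_T$ is canonically trivial when restricted to the fiber over $[u_j]$, so $\iota_j^\ast c_1^T(L_\chi)=0$. By Theorem \ref{T-equivariant_cobordism_of_the_point} the augmentation ideal of $S(T)$ is topologically generated over $\La$ by such Chern classes, so $r^T_X$ descends to the $\La$-linear map $\overline{r}^T_X\colon\Omega^T_\ast(X)\otimes_{S(T)}\La\to\Omega_\ast(X)$ appearing in the statement, which is a ring homomorphism when $X$ is smooth because the construction is compatible with external products.

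Bijectivity is then handled by dévissage. For a $T$-stable closed $Z\subseteq X$ with open complement $U$, the localisation sequences in ordinary and equivariant cobordism fit into a commutative ladder, and the five-lemma lets one induct on a $T$-stable stratification of $X$. The base case is a single $T$-orbit $X=T/T_x$, where via an isomorphism $\Omega^T_\ast(T/T_x)\cong\Omega^{T_x}_\ast(\Spec k)$ the claim reduces to the identity $\Omega^{T_x}_\ast(\Spec k)\otimes_{S(T)}\La\cong\La$; this follows by comparing augmentation ideals along the restriction of characters $S(T)\to S(T_x)$, in the spirit of Lemma \ref{IsomorphismCobordism T and T/F}.

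The main obstacle is controlling the interchange of the inverse limit in the definition of $\Omega^T_\ast(X)$ with the right-exact functor $(-)\otimes_{S(T)}\La$. The key input is that $\codim(V_j\setminus U_j)\to\infty$, which forces the transition maps in the inverse system $\{\Omega_\ast(X\times^T U_j)\}_j$ to satisfy a Mittag-Leffler condition in each fixed bidegree; this allows the limit and the tensor product to commute, so that the dévissage propagates to the limit. Without this stabilisation the argument breaks, since equivariant cobordism is generally not free over $S(T)$ for non-filtrable $X$ and no Künneth-type splitting is available directly.
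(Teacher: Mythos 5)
The paper does not prove this statement; it is quoted from \cite[Theorem 3.4]{CobTorus}, so your attempt has to be measured against Krishna's proof there. Your construction of the map is fine and essentially equivalent to the standard forgetful map: restriction to a fibre of $X\times^T U_j\to U_j/T$ is a regular embedding (the base change of a point of the smooth base $U_j/T$ along a smooth morphism), the Gysin pullbacks are compatible with the inverse system, and $(L_\chi)_T$ trivialises on a fibre, so the map kills the augmentation ideal and factors through $\Omega^T_\ast(X)\otimes_{S(T)}\La$.

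The bijectivity argument, however, has two genuine gaps. First, the localisation sequence in algebraic cobordism is only right exact: for a closed $Z\subseteq X$ with open complement $U$ one has $\Omega_\ast(Z)\to\Omega_\ast(X)\to\Omega_\ast(U)\to 0$ and nothing to the left of $\Omega_\ast(Z)$. A ladder of right exact sequences whose outer vertical maps are isomorphisms only yields surjectivity of the middle map, so the five-lemma step cannot deliver injectivity. Second, a general $T$-variety admits no finite stratification by single orbits (consider the trivial action on any positive-dimensional $X$), so the proposed base case $X=T/T_x$ does not reach the general statement; one would need strata fibred in homogeneous spaces together with a K\"unneth-type input for $\Omega^T_\ast$ of such fibrations, which is exactly the kind of splitting you concede at the end is unavailable. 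The proof in \cite{CobTorus} avoids stratifying $X$ altogether: it reduces to $T=\G_m$ by induction on the rank of the torus, and for $\G_m$ applies localisation not to $X$ but to the closed embedding $X\times^{\G_m}U_j\hookrightarrow X\times^{\G_m}U_{j+1}$ of approximation spaces, whose open complement is isomorphic to $X\times V_j$; homotopy invariance (valid for arbitrary $X\in\boldsymbol{\Sch}_k$) identifies the cokernel with $\Omega_\ast(X)$ up to the degree shift, while the self-intersection formula (Proposition \ref{Self-intersection_formula}) identifies the first map, after passage to the limit, with multiplication by $t=c_1^T(L_\chi)$. This presents $\Omega_\ast(X)$ directly as $\Omega^T_\ast(X)/(t_1,\dots,t_n)\Omega^T_\ast(X)=\Omega^T_\ast(X)\otimes_{S(T)}\La$, establishing injectivity and surjectivity at once and sidestepping the Mittag--Leffler issue you raise.
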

\begin{bem}
	The previous result leads to an abstract description of the rational ordinary algebraic cobordism ring of any smooth projective and spherical $G$-variety $X$. Using this result, we would be able to describe $\Omega^\ast(X)_\Q$ explicitly if we could compute all the classes in $\Omega_T^\ast(X)_\Q$. We will come back to this problem in Section \ref{Section_equiv_Multiplicities}.
\end{bem}
Now we want to have a look at the specific example $\IG(2,5)$ for which we can use Theorem \ref{EquivCobThmSpherical} in order to compute its rational equivariant cobordism ring.
\begin{bei}\label{ExampleIG(2,5)}
	Let $V=k^5$ be given with standard basis $e_1,...,e_5$. Recall that the odd symplectic Grassmannian $X=\IG(2,5)$ is given by 
	\begin{align*}
		\IG(2,5)=\{\Sigma\in \Gr(2,5)\ \vert \ \Sigma \text{ is isotropic for } \omega\}
	\end{align*}
	where $\omega$ is the antisymmetric form given by 
	\begin{align*}
		\omega: V\times V\to k, ((a_i),(b_j))_{1\leq i,j\leq 5}\mapsto a_5b_1+a_4b_2-a_2b_4-a_1b_5
	\end{align*}
	which has kernel $e_3$.	It is well-known that all odd symplectic Grassmannians are smooth, projective and horospherical (cf. \cite[Theorem 0.1]{Pasquier_horospherical_classification}). We will consider the natural torus action of $T\subseteq\Sp_4$ on $\IG(2,5)$ which leads to eight $T$-fixed points in $\IG(2,5)$. These are given by
	\begin{align*}
	x_{12}&=[e_1\land e_2], \ x_{13}=[e_1\land e_3], \  x_{14}=[e_1\land e_4],\ x_{23}=[e_2\land e_3]\\ 
	x_{25}&=[e_2\land e_5], \ x_{34}=[e_3\land e_4],\ x_{35}=[e_3\land e_5],\ x_{45}=[e_4\land e_5]
	\end{align*}  
	in $\PZ(\bigwedge^2 k^5)$ whereas the points $x_{15}=[e_1\land e_5]$ and $x_{24}=[e_2\land e_4]$ are not in $\IG(2,5)$. 
	
	The positive roots of $(\Sp_4,T)$ are given by $\varepsilon_1-\varepsilon_2, \varepsilon_1+\varepsilon_2,2\varepsilon_1$ and $2\varepsilon_2$. A short computation shows that the fixed point subschemes $X^{\Ker(\varepsilon_1-\varepsilon_2)^0}$ and $X^{\Ker(\varepsilon_1+\varepsilon_2)^0}$ consist of three $T$-stable curves and the remaining two isolated fixed points, respectively. Lastly, the fixed point subschemes $X^{\Ker(2\varepsilon_1)^0}$ and $X^{\Ker(2\varepsilon_2)^0}$ consist of two projective planes $\PZ^2$ and a $T$-stable curve, respectively. 
	
	The other codimension one subtori given by $T'=\Ker(\chi)^0$ for some primitive character $\chi$ of $T$ which is not a root will not contribute to the computations of cobordism since $X^{T'}=X^T$ holds for those $T'$. 
	
	These precise descriptions of the fixed point subschemes lead to the equations describing the image of $i^\ast: \Omega_T^\ast(\IG(2,5))_\Q\to \Omega_T^\ast((\IG(2,5))^T)_\Q$. Using Theorem \ref{EquivCobThmSpherical} the equations are given by 
	\begin{align*}
	&f_{13}\equiv f_{23}\mod c_1^T(L_{\varepsilon_1-\varepsilon_2}), \ \ f_{34}\equiv f_{35}\mod c_1^T(L_{\varepsilon_1-\varepsilon_2}), \\
	&f_{14}\equiv f_{25}\mod c_1^T(L_{\varepsilon_1-\varepsilon_2}), \ \ f_{13}\equiv f_{34}\mod c_1^T(L_{\varepsilon_1+\varepsilon_2}), \\
	&f_{23}\equiv f_{35}\mod c_1^T(L_{\varepsilon_1+\varepsilon_2}), \ \ f_{12}\equiv f_{45}\mod c_1^T(L_{\varepsilon_1+\varepsilon_2}), \\
	&f_{13}\equiv f_{35}\mod c_1^T(L_{2\varepsilon_1}),\ \ f_{23}\equiv f_{34}\mod c_1^T(L_{2\varepsilon_2}),\\
	&f_{12}\equiv f_{23}\equiv f_{25} \mod c_1^T(L_{2\varepsilon_1}),\ \ f_{14}\equiv f_{34}\equiv f_{45} \mod c_1^T(L_{2\varepsilon_1}),\\
	&f_{25}\equiv f_{35}\equiv f_{45} \mod c_1^T(L_{2\varepsilon_2}), \ \ f_{12}\equiv f_{13}\equiv f_{14} \mod c_1^T(L_{2\varepsilon_2}), \\ &(f_{12}-f_{23})+\rho_{1/2}c_1^T(L_{2\varepsilon_1})(f_{25}-f_{12})\equiv 0 \mod c_1^T(L_{2\varepsilon_1})^2,\\
	&(f_{14}-f_{34})+\rho_{1/2}c_1^T(L_{2\varepsilon_1})(f_{45}-f_{14})\equiv 0 \mod c_1^T(L_{2\varepsilon_1})^2,\\ &(f_{25}-f_{35})+\rho_{1/2}c_1^T(L_{2\varepsilon_2})(f_{45}-f_{25})\equiv 0 \mod c_1^T(L_{2\varepsilon_2})^2,\\ &(f_{12}-f_{13})+\rho_{1/2}c_1^T(L_{2\varepsilon_2})(f_{14}-f_{12})\equiv 0 \mod c_1^T(L_{2\varepsilon_2})^2.\\
	\end{align*}
	These equations give a complete description of the rational equivariant algebraic cobordism ring of $\IG(2,5)$. 
	
	Furthermore, we would like to identify the elements in the algebra given by the equations with geometric $T$-stable cobordism cycles in $\Omega_T^\ast(\IG(2,5))_\Q$. As an example, we consider the $T$-stable projective space $\PZ^2_{14,34,45}$ containing the fixed points $x_{14}, x_{34}$ and $x_{45}$ in $\IG(2,5)$. This leads to 
	\begin{align*}
	i_{x_{14}}^\ast[\PZ^2_{14,34,45}\to \IG(2,5)]&=n_1c_1^T(L_{\varepsilon_1-\varepsilon_2})c_1^T(L_{2\varepsilon_2})^2\\
	i_{x_{34}}^\ast[\PZ^2_{14,34,45}\to \IG(2,5)]&=n_2c_1^T(L_{\varepsilon_1-\varepsilon_2})c_1^T(L_{\varepsilon_1+\varepsilon_2})c_1^T(L_{2\varepsilon_2})\\
	i_{x_{45}}^\ast[\PZ^2_{14,34,45}\to \IG(2,5)]&=n_3c_1^T(L_{\varepsilon_1+\varepsilon_2})c_1^T(L_{2\varepsilon_2})^2
	\end{align*} 
	for some $n_1,n_2,n_3\in S(T)_\Q$ of degree zero. We do not know at this point which particular choice of the $n_i$ determines the pullback of the class $[\PZ^2 _{14,34,45}\to \IG(2,5)]$, but one of those tuples is certainly its image under the pullback map $i^\ast$. We will come back to this problem in the sequel of this article.   
\end{bei}
\section{Equivariant cobordism of horospherical varieties of Picard rank one}
In this section, we let $G$ be a connected reductive linear algebraic group, $B$ a fixed Borel subgroup with maximal torus $T$ and $W=N(T)/T$ the Weyl group. We want to compute the equivariant algebraic cobordism of smooth projective horospherical varieties of Picard number one. We begin this section describing the $T$-stable curves in flag varieties which will be important in order to describe the geometry of horospherical varieties. After that, we will define horospherical varieties and recall some of their basic notions as well as their geometry. Excellent references for the geometry of horospherical varieties are for example \cite{Perrin_Geometry_of_horospherical_varieties,Pasquier_horospherical_classification}. Using these descriptions, we will be able to describe the rational equivariant cobordism ring of horospherical varieties of Picard number one in terms of Theorem \ref{EquivCobThmSpherical}. 
\subsection{\texorpdfstring{$\boldsymbol{T}$}{T}-stable curves in flag varieties}\label{Tinvariant_curves_flags}
In this section, we will recall the main notions and results on $T$-stable curves in flag varieties $G/P$ from \cite[Section 3]{Fulton_Woodward}. We denote by $R=R^+\cup R^-$ the positive and negative roots and by $S$ the simple roots. Furthermore, we denote by $s_\alpha$ the reflections in $W$ which are indexed by positive roots $\alpha$. These are simple reflections if $\alpha$ is in $S$. For a subset $I\subseteq S$, let $W_I$ be the group which is generated by the reflections $s_\alpha$ for $\alpha$ in $I$. In addition, let $P_I=\coprod_{w\in W_I}BwB$ and $R^+_{P_I}$ be the set of positive roots that can be written as sums of roots in $I$. This is the well-known correspondence between parabolic subgroups $P_I$ of $G$ containing $B$ and subsets $I\subseteq S$. The length $\ell(w)$ of an $w\in W$ is the minimum number of simple reflections whose product is $w$. 

For any $u\in W/W_I$ we let $X(u)=\overline{BuP_I/P_I}$ be the corresponding Schubert variety which is of dimension $\ell(u)$ where $\ell(u)$ denotes the unique minimum length of a representative of $u$ in $W$. We denote its cohomology class $[X(u)]$ by $\sigma(u)$. Furthermore, for any $u\in W/W_I$ we denote by $x(u)=uP_I/P_I$ the corresponding $T$-fixed point in $G/P_I$. The Schubert classes of dimension one have the form $\sigma(s_\beta)$ as $\beta$ varies over $S\setminus I$. We define a degree $d$ to be a nonnegative integral combination $d=\sum d_\beta \sigma(s_\beta)$. The degrees are the classes of curves on $G/P_I$. For any positive root $\alpha$, we write $\alpha=\sum n_{\alpha\beta}\beta$ as the nonnegative sum of simple roots $\beta$. Then we define the degree $d(\alpha)$ of $\alpha$ by
\begin{align*}
d(\alpha):=\sum_{\beta\in S\setminus I}n_{\alpha\beta}\frac{(\beta,\beta)}{(\alpha,\alpha)}\sigma(s_\beta).
\end{align*}     
\begin{bem}\label{Degree_of_Curves}
	If $h_\alpha=2\alpha/(\alpha,\alpha)$ and $\omega_\beta$ is the fundamental weight corresponding to $\beta$, then $h_\alpha(\omega_\beta)=n_{\alpha\beta}(\beta,\beta)/(\alpha,\alpha)$ which implies 
	\begin{align*}
	d(\alpha)=\sum_{\beta\in S\setminus I}h_\alpha(\omega_\beta)\sigma(s_\beta).
	\end{align*}
\end{bem}
\begin{lem}\cite[Lemma 3.1]{Fulton_Woodward}
	If $w$ is in $W_I$, then we have $d(w(\alpha))=d(\alpha)$.
\end{lem}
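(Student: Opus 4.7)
The plan is to exploit the reformulation of $d(\alpha)$ given just above the lemma: $d(\alpha)=\sum_{\beta\in S\setminus I}h_\alpha(\omega_\beta)\sigma(s_\beta)$, where $h_\alpha=2\alpha/(\alpha,\alpha)$ is the coroot. Written in this form, the statement $d(w(\alpha))=d(\alpha)$ reduces to the coefficient-wise equality $h_{w(\alpha)}(\omega_\beta)=h_\alpha(\omega_\beta)$ for every $\beta\in S\setminus I$. I would prove this in two short steps.

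First, I would observe that coroots transform equivariantly under $W$, i.e.\ $h_{w(\alpha)}=w(h_\alpha)$. This is immediate from the definition together with $W$-invariance of the bilinear form $(\cdot,\cdot)$: one has $(w\alpha,w\alpha)=(\alpha,\alpha)$, hence $h_{w\alpha}=2w\alpha/(w\alpha,w\alpha)=w(2\alpha/(\alpha,\alpha))=w(h_\alpha)$. Using $W$-invariance of the natural pairing between the weight and coweight lattices, this yields
\begin{equation*}
h_{w(\alpha)}(\omega_\beta)=w(h_\alpha)(\omega_\beta)=h_\alpha(w^{-1}(\omega_\beta)).
\end{equation*}

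Second, I would show that $W_I$ fixes each fundamental weight $\omega_\beta$ with $\beta\in S\setminus I$. It suffices to check this on the generators $s_\gamma$, $\gamma\in I$. From the standard reflection formula $s_\gamma(\omega_\beta)=\omega_\beta-\langle\omega_\beta,h_\gamma\rangle\gamma$ and the defining property $\langle\omega_\beta,h_\gamma\rangle=\delta_{\beta\gamma}$ for simple roots $\beta,\gamma$, the pairing vanishes whenever $\gamma\in I$ and $\beta\in S\setminus I$. Hence $s_\gamma(\omega_\beta)=\omega_\beta$ for every such $\gamma$, so $W_I$ fixes $\omega_\beta$ pointwise. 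Consequently $w^{-1}(\omega_\beta)=\omega_\beta$ for all $w\in W_I$, and the identity from the previous step becomes $h_{w(\alpha)}(\omega_\beta)=h_\alpha(\omega_\beta)$. Summing over $\beta\in S\setminus I$ gives $d(w(\alpha))=d(\alpha)$.

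There is no genuine obstacle here; the only care needed is to keep the $W$-equivariance statements straight (covariance of coroots versus the identity $\langle w\lambda,wh\rangle=\langle\lambda,h\rangle$). No properties of algebraic cobordism, formal group laws, or Schubert classes beyond the definitions recalled just above the lemma enter the argument.
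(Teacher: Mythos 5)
Your proof is correct and is essentially the standard argument found in the cited source: pass to the coroot pairing form $d(\alpha)=\sum_{\beta\in S\setminus I}h_\alpha(\omega_\beta)\sigma(s_\beta)$, use $W$-equivariance of coroots and $W$-invariance of the pairing to move $w$ onto $\omega_\beta$, and then observe that $W_I$ fixes every $\omega_\beta$ with $\beta\in S\setminus I$ because $\langle\omega_\beta,h_\gamma\rangle=0$ for all $\gamma\in I$. Note that the paper itself does not reprove this lemma but only cites it from Fulton and Woodward; your write-up fills in the argument correctly along the same lines.
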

For any positive root $\alpha$ which is not in $R^+_{P_I}$, there is a unique $T$-stable curve $C_\alpha$ in $G/P_I$ that contains the points $x(1)$ and $x(s_\alpha)$. We know that $C_\alpha=Z_\alpha\cdot P_I/P_I$ where $Z_\alpha$ is the 3-dimensional subgroup of $G$ whose Lie algebra is $\mathfrak{g_\alpha}\oplus\mathfrak{g_{-\alpha}}\oplus[\mathfrak{g_\alpha},\mathfrak{g_{-\alpha}}]$. 
\begin{lem}\cite[Lemma 3.4]{Fulton_Woodward}
	The degree $[C_\alpha]$ of $C_\alpha$ is $d(\alpha)$.
\end{lem}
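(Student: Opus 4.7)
The plan is to determine the class $[C_\alpha] \in H_2(G/P_I, \Z)$ by computing its intersection numbers with the generators of $\operatorname{Pic}(G/P_I)$. Recall that $\operatorname{Pic}(G/P_I)$ is freely generated by the line bundles $L_{\omega_\beta}$ for $\beta \in S \setminus I$, where $L_{\omega_\beta}$ is the $G$-linearised line bundle on $G/P_I$ associated to the fundamental weight $\omega_\beta$. Under the intersection pairing, the Schubert classes $\{\sigma(s_\beta)\}_{\beta\in S\setminus I}$ form a basis of $H_2$ dual to the basis $\{c_1(L_{\omega_\beta})\}_{\beta\in S\setminus I}$ of $H^2$, i.e.\ $c_1(L_{\omega_\beta}) \cdot \sigma(s_{\beta'}) = \delta_{\beta\beta'}$. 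Once this duality is granted, the coefficient $d_\beta$ of $\sigma(s_\beta)$ in $[C_\alpha]$ equals $\deg(L_{\omega_\beta}|_{C_\alpha})$, and the statement reduces to identifying this degree with $h_\alpha(\omega_\beta)$.

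Next I would give an explicit description of $C_\alpha$ as a projective line using the subgroup $Z_\alpha$. The group $Z_\alpha$ is isomorphic to $\SL_2$ or $\PSL_2$; it acts on $G/P_I$ by left multiplication, and since $\alpha \notin R^+_{P_I}$ the root subgroup $U_{-\alpha} \subseteq Z_\alpha$ does not lie in $P_I$. Therefore the stabiliser of the base point $x(1) = P_I/P_I$ in $Z_\alpha$ is a Borel subgroup $B_\alpha$ of $Z_\alpha$, and the orbit map yields a $Z_\alpha$-equivariant isomorphism $Z_\alpha/B_\alpha \to C_\alpha$, so that $C_\alpha \cong \PZ^1$.

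I would then compute the degree of $L_{\omega_\beta}|_{C_\alpha}$ under this identification. The pullback of $L_{\omega_\beta}$ to $Z_\alpha/B_\alpha \cong \PZ^1$ is the line bundle associated to the character of $B_\alpha$ obtained by restricting $\omega_\beta$ to the maximal torus of $Z_\alpha$, which is the image of the coroot $h_\alpha$. Hence this restriction is $\mathcal{O}_{\PZ^1}(h_\alpha(\omega_\beta))$, whose degree is $h_\alpha(\omega_\beta)$. Combined with Remark \ref{Degree_of_Curves}, this yields $[C_\alpha] = \sum_{\beta \in S\setminus I} h_\alpha(\omega_\beta)\sigma(s_\beta) = d(\alpha)$.

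The main technical obstacle is the duality $c_1(L_{\omega_\beta}) \cdot \sigma(s_{\beta'}) = \delta_{\beta\beta'}$ between the Schubert curves and the ample generators of the Picard group. One can avoid appealing to it as a black box by applying the very same $Z$-orbit argument to the Schubert curve $X(s_{\beta'})$ itself: since $x(1)$ and $x(s_{\beta'})$ are its two $T$-fixed points and there is a unique $T$-stable curve joining them, one has $X(s_{\beta'}) = C_{\beta'}$, and the computation above specialises to $\deg(L_{\omega_\beta}|_{X(s_{\beta'})}) = h_{\beta'}(\omega_\beta) = \delta_{\beta\beta'}$ by the defining property of fundamental weights. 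This makes the whole argument self-contained.
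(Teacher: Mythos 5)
Your proof is correct, but note that the paper does not give a proof of this statement at all; it is quoted directly from Fulton and Woodward, so there is no internal argument to compare against. Your route is a legitimate and self-contained one, and it differs somewhat from the one Fulton and Woodward actually use: they derive the degree $\deg\bigl(L_{\omega_\beta}\vert_{C_\alpha}\bigr)=h_\alpha(\omega_\beta)$ from their Lemma 2.1 (the $T$-fixed-point localisation formula, which is also reproduced in this paper a few lines below), comparing the weights of $T$ on the fibres of $L_{\omega_\beta}$ at $x(1)$ and $x(s_\alpha)$ and the tangent weight $-\alpha$ at $x(1)$, whereas you identify $C_\alpha\cong Z_\alpha/B_\alpha\cong\PZ^1$ directly and read off the degree from the restriction of the $B_\alpha$-character to the image of the coroot $\alpha^\vee$. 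The two methods are equivalent in substance, and your extra paragraph replacing the black-box duality $c_1(L_{\omega_\beta})\cdot\sigma(s_{\beta'})=\delta_{\beta\beta'}$ by the specialisation $X(s_{\beta'})=C_{\beta'}$, $h_{\beta'}(\omega_\beta)=\delta_{\beta\beta'}$, is a nice touch that makes the argument genuinely closed. One small point worth making explicit: the identification of the stabiliser $Z_\alpha\cap P_I$ with a Borel of $Z_\alpha$ relies on $\mathfrak{g}_\alpha\subseteq\mathfrak{p}_I$ (since $\alpha>0$) and $\mathfrak{g}_{-\alpha}\nsubseteq\mathfrak{p}_I$ (since $\alpha\notin R^+_{P_I}$); you state the second but it is worth recording the first as well, since both are needed to conclude that the stabiliser is exactly the Borel and not larger or smaller.
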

\begin{defi}\label{adjacent}
	We say that two unequal elements $u$ and $v$ in $W/W_I$ are \textbf{adjacent} if there is a reflection $s_\alpha$ in $W$ for $\alpha \in R^+$ such that $v=s_\alpha u$. In this case we define $d(u,v)$ to be the degree $d(\alpha)$. 
\end{defi}
\begin{bem}
	If $u$ and $v$ are adjacent, then for any $w\in W$, the elements $wu$ and $wv$ are also adjacent and $d(wu,wv)=d(u,v)$ holds.
\end{bem}
\begin{lem}\cite[Lemma 4.2]{Fulton_Woodward}
	Elements $u$ and $v$ in $W/W_I$ are adjacent if and only if $x(u)\neq x(v)$ and there is a $T$-stable curve $C$ connecting $x(u)$ and $x(v)$. In this case, the curve $C$ is unique, isomorphic to $\PZ^1$ and its degree is equal to $d(u,v)$.
\end{lem}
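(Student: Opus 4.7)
The plan is to prove both directions after a reduction. Since any $u \in W$ lifts to an element of $N(T)$ acting on $G/P_I$ by left multiplication, and this action normalises $T$, it sends $T$-stable curves to $T$-stable curves. Moreover, it preserves adjacency, and the remark preceding the lemma tells us $d(wu,wv) = d(u,v)$. So I may assume $u = 1$ and $v \in W/W_I$ with $x(v) \neq x(1)$.

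For the forward direction, assume $v = s_\alpha$ for some positive root $\alpha$. The hypothesis $x(s_\alpha) \neq x(1)$ forces $s_\alpha \notin W_I$, equivalently $\alpha \notin R^+_{P_I}$. The three-dimensional subgroup $Z_\alpha \subseteq G$ with Lie algebra $\mathfrak{g}_\alpha \oplus \mathfrak{g}_{-\alpha} \oplus [\mathfrak{g}_\alpha, \mathfrak{g}_{-\alpha}]$ is isomorphic to $\SL_2$ or $\PSL_2$, and its orbit $C_\alpha := Z_\alpha \cdot x(1)$ is isomorphic to $Z_\alpha/(Z_\alpha \cap P_I)$. Because $\alpha \notin R^+_{P_I}$, the intersection $Z_\alpha \cap P_I$ is a Borel subgroup of $Z_\alpha$, so $C_\alpha \cong \PZ^1$. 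Since $T$ normalises $Z_\alpha$, the curve $C_\alpha$ is $T$-stable and contains $x(1)$ and $s_\alpha \cdot x(1) = x(s_\alpha)$.

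For uniqueness and the converse, let $C$ be any irreducible $T$-stable curve connecting $x(1)$ and some other $T$-fixed point $x(v)$. The $T$-action on the tangent line $T_{x(1)} C$ is via a character, which must appear as a weight in
\begin{equation*}
T_{x(1)}(G/P_I) \;\cong\; \bigoplus_{\beta \in R^+ \setminus R^+_{P_I}} \mathfrak{g}_{-\beta}.
\end{equation*}
So this character is $-\beta$ for some $\beta \in R^+ \setminus R^+_{P_I}$. The one-parameter unipotent subgroup $U_{-\beta}$ acts on $C$ with this same weight and fixes $x(1)$, hence its orbit sweeps out an open dense set of $C$; this identifies $C$ with the closure $\overline{U_{-\beta} \cdot x(1)}$, which lies inside the $\SL_2$-orbit $C_\beta$ constructed above, forcing $C = C_\beta$. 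The second $T$-fixed point of $C_\beta$ is $x(s_\beta)$, so $v = s_\beta$ and adjacency follows.

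Finally, for the degree, use Remark \ref{Degree_of_Curves}: it suffices to show $\deg(L_{\omega_\gamma}\vert_{C_\alpha}) = h_\alpha(\omega_\gamma)$ for each $\gamma \in S \setminus I$, where $L_{\omega_\gamma}$ is the $G$-linearised line bundle on $G/P_I$ with highest weight $\omega_\gamma$. Via the Borel–Weil embedding $G/P_I \hookrightarrow \PZ(V_{\omega_\gamma})$ the curve $C_\alpha$ is an $\SL_2$-orbit of the highest weight vector, so its image is a rational curve of degree equal to the weight of $T$ on the restriction, which is precisely $h_\alpha(\omega_\gamma)$ by the standard $\mathfrak{sl}_2$-representation theory. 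Summing over $\gamma$ gives $[C_\alpha] = d(\alpha) = d(1, s_\alpha)$.

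The main obstacle is the uniqueness step: one needs to argue cleanly that an irreducible $T$-stable curve through $x(1)$ is determined by a single $T$-weight occurring in the tangent space and coincides with the $\SL_2$-orbit $C_\beta$. The argument above uses that a $U_{-\beta}$-orbit inside a $T$-stable curve is necessarily dense in it (since it is an open subset invariant under a connected algebraic group), combined with the fact that the root space $\mathfrak{g}_{-\beta}$ generates, together with $\mathfrak{g}_\beta$, the whole Lie algebra of $Z_\beta$; this is where the assumption that $G$ is reductive and that we are working with a genuine root (not a multiple) enters.
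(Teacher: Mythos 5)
The paper does not prove this lemma; it is cited from Fulton--Woodward, so there is no in-paper proof to compare against. Evaluating your argument on its own terms: the reduction to $u=1$, the forward direction, and the degree computation via Borel--Weil and $\mathfrak{sl}_2$-theory are all fine. The gap is exactly where you flag it, in the uniqueness/converse step, and the sketch you offer does not actually close it. You assert that $U_{-\beta}$ ``acts on $C$'' (that is, preserves $C$) and ``fixes $x(1)$.'' The latter is false: since $\beta\notin R^+_{P_I}$, the root subgroup $U_{-\beta}$ is not contained in $P_I$, so it moves $x(1)$. More seriously, there is no a priori reason why $U_{-\beta}$ should preserve an arbitrary $T$-stable curve $C$; the ``open orbit of a connected group is dense'' principle you invoke at the end presupposes $U_{-\beta}\cdot x(1)\subseteq C$, which is precisely what remains to be shown. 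A secondary issue is that you read the tangent weight off from $T_{x(1)}C$ being a line, which tacitly assumes $C$ is smooth at $x(1)$ -- also not yet established.

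The standard way to close the gap (and the one used in the sources Fulton--Woodward cite) goes through the fixed locus of a codimension-one subtorus rather than a root subgroup. Normalise $C$ to $\PZ^1$; the $T$-action factors through a nonzero character $\chi$, so $T':=\Ker(\chi)^0$ acts trivially on the normalisation and hence on $C$, giving $C\subseteq (G/P_I)^{T'}$. Comparing tangent weights at $x(1)$ identifies $T'=\Ker(\beta)^0$ for a unique $\beta\in R^+\setminus R^+_{P_I}$, so $T'$ is singular and $C_G(T')$ has semisimple rank one. Consequently each connected component of $(G/P_I)^{T'}$ is a flag variety for $C_G(T')$, hence a point or a $\PZ^1$, and the component through $x(1)$ is exactly $C_\beta=Z_\beta\cdot x(1)$. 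This forces $C=C_\beta$ and delivers smoothness of $C$, uniqueness of the connecting curve, and $v=s_\beta$ in one stroke, after which your degree computation applies unchanged.
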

\begin{bem}
	A general $T$-stable curve in $G/P_I$ has the form $w\cdot C_\alpha$ for some $\alpha\in R^+\setminus R^+_{P_I}$ and $w\in W$. This curve is the unique $T$-stable curve connecting $x(w)=w\cdot x(1)$ and $x(w s_\alpha)=w\cdot x(s_\alpha)$. 
\end{bem}
\begin{bei}\label{Example_G2_curves_closed orbits}
	We consider the flag variety $G_2/P_\alpha$ where $\alpha$ and $\beta$ denote the simple roots of $G_2$, $\beta$ being the long root. The flag variety $G_2/P_\alpha$ is a 5-dimensional quadric whose geometry was also studied in \cite{Perrin_Geometry_of_horospherical_varieties}. The positive roots are given by
	\begin{align*}
		R^+=\{\alpha,\beta,\alpha+\beta,2\alpha+\beta,3\alpha+\beta,3\alpha+2\beta\}.
	\end{align*} 
	Furthermore, we know $W_{\alpha}$ is generated by $s_\alpha$ which has order 2. Thus, we have 6 $T$-fixed points in $G_2/P_\alpha$ which are indexed by the elements of $W/W_\alpha$. From the above we know that for any $\gamma\in R^+\setminus R^+_{P_\alpha}$ there exists a unique $T$-stable curve connecting $x(1)$ and $x(s_\gamma)$. A short computation shows that we can find a reflection in $W$ for any two fixed elements in $W/W_\alpha$ such that they are adjacent. This implies that there is a $T$-stable curve connecting any two of the $T$-fixed points in $G_2/P_\alpha$ which leads to a total of 15 $T$-stable curves in the flag variety $G_2/P_\alpha$. Similar computations can be done for the flag variety $G_2/P_\beta$. 
\end{bei}
Later on, we will be interested in the weight acting on a $T$-stable curve $C$ and also its degree. To obtain those one can use the following lemma.
\begin{lem}\cite[Lemma 2.1]{Fulton_Woodward}
	Let a torus $T$ act on a curve $C\cong \PZ^1$ with two different $T$-fixed points $p$ and $q$ and let $L$ be a $T$-equivariant line bundle on $C$. Let $\chi_p$ and $\chi_q$ be the weights of $T$ acting on the fibers $L_p$ and $L_q$, respectively, and $\psi_p$ the weight of $T$ acting on the tangent space of $C$ at $p$. Then we have
	\begin{align*}
		\chi_p-\chi_q=n\psi_p
	\end{align*}
	where $n$ is the degree of $L$ on $C$.	
\end{lem}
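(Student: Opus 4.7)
The plan is to reduce the assertion to an explicit calculation on $\PZ^1$ after identifying $C$ with a projectivization. Since $C\cong\PZ^1$ carries a nontrivial $T$-action with exactly two fixed points $p$ and $q$, the action factors through a one-dimensional quotient of $T$, and one can realise $C$ as $\PZ(V)$ for a two-dimensional $T$-representation $V=V_{\mu_p}\oplus V_{\mu_q}$, where $V_{\mu_p}$ and $V_{\mu_q}$ are the weight lines corresponding to $p$ and $q$, respectively. A direct chart computation in the affine neighbourhood of $p$ using the coordinate $v/u$, with $u$ a generator of $V_{\mu_p}$ and $v$ a generator of $V_{\mu_q}$, gives the tangent weight
\begin{align*}
\psi_p=\mu_q-\mu_p.
\end{align*}

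The second step is to classify the $T$-equivariant line bundles on $C=\PZ(V)$. Every such bundle is isomorphic to $\mathcal{O}_{\PZ(V)}(n)\otimes L_\lambda$ for a unique integer $n=\deg L$ and a unique character $\lambda$ of $T$; this is standard since $\mathrm{Pic}(\PZ^1)=\Z$ and the kernel of the forgetful map from equivariant to ordinary Picard group is exactly the character group of $T$. Using that the fiber of $\mathcal{O}_{\PZ(V)}(1)$ at $[w]$ is the dual line $(kw)^\ast$, one reads off
\begin{align*}
\chi_p=-n\mu_p+\lambda,\qquad \chi_q=-n\mu_q+\lambda.
\end{align*}

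Subtracting these two expressions immediately yields
\begin{align*}
\chi_p-\chi_q=n(\mu_q-\mu_p)=n\psi_p,
\end{align*}
which is the claim. There is no real obstacle here: once the identification $C=\PZ(V)$ is fixed and the classification of $T$-equivariant line bundles on $\PZ^1$ is invoked, both $\psi_p$ and $\chi_p-\chi_q$ reduce to the difference $\mu_q-\mu_p$, and the proportionality constant is read off to be $n$. The only point requiring care is the sign convention for the fiber of $\mathcal{O}(1)$ at a point $[w]$ (dualising the tautological line), which is what guarantees that the same factor $\mu_q-\mu_p$ appears, up to the factor $n$, in both weights.
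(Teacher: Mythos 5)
Your proof is correct. The paper simply cites this statement from Fulton--Woodward and does not reprove it, so there is no in-paper argument to compare against; but your linearization argument is a clean and standard way to establish it. Two small points worth making explicit: (a) the identification $C\cong\PZ(V)$ for a two-dimensional $T$-representation $V$ requires lifting the homomorphism $T\to\operatorname{Aut}(\PZ^1)=\PSL_2$ (with image in a maximal torus) to $\GL_2$, which is possible since $T$ is a split torus — or one can simply invoke the paper's standing assumption that all actions are linear; (b) the uniqueness in the decomposition $L\cong\mathcal{O}_{\PZ(V)}(n)\otimes L_\lambda$ presupposes a fixed choice of equivariant structure on $\mathcal{O}(1)$ (the one dual to the tautological subbundle), which is what you use when reading off $\chi_p=-n\mu_p+\lambda$. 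With these conventions fixed, the tangent weight $\psi_p=\mu_q-\mu_p$ and the fiber weights are computed correctly, and the subtraction gives exactly $\chi_p-\chi_q=n\psi_p$.
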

\begin{bem}
	In our case, and more specifically in the previous Example \ref{Example_G2_curves_closed orbits}, the degree can also be obtained by Remark \ref{Degree_of_Curves}. These computations lead to 6 $T$-stable curves of degree 1, 6 $T$-stable curves of degree 3 and 3 $T$-stable curves of degree 2 in the flag variety $G_2/P_{\alpha}$ in Example \ref{Example_G2_curves_closed orbits}.	
\end{bem}    
\subsection{Geometry of horospherical varieties of Picard number one}
In this section, we focus on the class of \textbf{horospherical varieties} which is a special case of spherical varieties. We give two equivalent definitions and refer to \cite{Perrin_Geometry_of_horospherical_varieties,PhDThesis_Pasquier,horospherical,Pasquier_horospherical_classification} for more details on the geometry of horospherical varieties.
\begin{defi}
	Let $X$ be a normal $G$-variety.
	\begin{enumerate}[(i)]
		\item Let $H\subseteq G$ be a closed subgroup containing the unipotent radical $U$ of $B$. In this case, the homogeneous space $G/H$ is said to be \textbf{horospherical}.
		\item We call $X$ \textbf{horospherical} if it contains an open orbit isomorphic to a horospherical homogeneous space. 
	\end{enumerate}
\end{defi}
\begin{bem}
	A short computation shows that this open orbit isomorphic to $G/H$ contains an open orbit under the action of the Borel subgroup and therefore, a horospherical variety is spherical.
\end{bem} 
Now we give the second definition of horospherical varieties using a more geometric description. 
\begin{bem}
	A horospherical homogeneous space $G/H$ can be equivalently described as a torus bundle over a flag variety $G/P$ with fiber $P/H$. In this situation we have $P=N_G(H)$ by \cite[Proposition 2.2]{horospherical}. Furthermore, one has $P=TH=BH$ for all maximal tori $T$ of $B$ and all Borel subgroups contained in $P$. 
\end{bem}
\begin{defi}
	For a horospherical homogeneous space $G/H$, we call the dimension of the fiber $P/H$ the \textbf{rank of $\boldsymbol{G/H}$}. Furthermore, for a horospherical variety $X$, the \textbf{rank of $\boldsymbol{X}$} is defined as the rank of its open $G$-orbit.
\end{defi}
In this article, we focus on smooth projective horospherical varieties of Picard number one which have been classified by Pasquier \cite{Pasquier_horospherical_classification} in the following theorem.
\begin{thm}\cite[Theorem 0.1]{Pasquier_horospherical_classification}\label{Pasquier_classification}
	Let $G$ be a connected reductive algebraic group. Let $X$ be a smooth projective horospherical $G$-variety with Picard number one. Then one of the following cases can occur
	\begin{enumerate}[(i)]
		\item $X$ is homogeneous.
		\item $X$ is horospherical of rank 1. Its automorphism group is a connected non-reductive linear algebraic group, acting with exactly two orbits.
	\end{enumerate}
	Moreover, in the second case $X$ is uniquely determined by its two closed $G$-orbits $Y$ and $Z$, isomorphic to $G/P_Y$ and $G/P_Z$, respectively, and $(G,P_Y,P_Z)$ is one of the triples of the following list. 
	\begin{enumerate}[(1)]
		\item $(B_n,P(\omega_{n-1}),P(\omega_n))$ for $n\geq 3$
		\item $(B_3,P(\omega_1),P(\omega_3))$
		\item $(C_n,P(\omega_m),P(\omega_{m-1}))$ for $n\geq 2$ and $m\in [2,n]$
		\item $(F_4,P(\omega_2),P(\omega_3))$
		\item $(G_2,P(\omega_1),P(\omega_2)$ 
	\end{enumerate}	
	Here we denote by $P(\omega_i)$ the maximal parabolic subgroup of $G$ corresponding to the fundamental weight $\omega_i$ where we use the notations from Bourbaki \cite{Bourbaki_4-6}.
\end{thm}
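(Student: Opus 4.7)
The plan is to apply the Luna--Vust theory of spherical embeddings in the horospherical setting. After fixing a horospherical homogeneous space $G/H$, any $G$-equivariant embedding $X$ is classified combinatorially by a colored fan $\mathcal{F}_X$ in $N_\Q=\operatorname{Hom}(\chi(P/H),\Q)$ together with color data coming from the closures of codimension-one $B$-orbits; here $P=N_G(H)$. The Picard group of a complete smooth spherical variety admits a presentation via colors and $G$-stable prime divisors modulo relations coming from $B$-eigencharacters, so $\rho(X)=1$ severely restricts both the rank $r=\dim N_\Q$ and the shape of $\mathcal{F}_X$.

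First I would reduce to the cases $r=0$ and $r=1$. If $r\geq 2$, any complete colored fan contributes at least two independent classes to $\operatorname{Pic}(X)\otimes\Q$ after the color relations are imposed, contradicting $\rho(X)=1$. If $r=0$ then $H=P$ and $X=G/P$ is homogeneous, yielding case (i). If $r=1$ then the only complete fan in $\Q$ consists of the two half-lines $\Q_{\geq 0}$ and $\Q_{\leq 0}$, producing exactly two closed $G$-orbits which, for horospherical varieties, must be generalised flag varieties $Y=G/P_Y$ and $Z=G/P_Z$.

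Next I would translate smoothness into combinatorial conditions on $(P_Y,P_Z)$ and the color assignments on the two rays. By the local structure theorem for spherical varieties, smoothness of $X$ along $Y$ is equivalent to the canonical $P_Y$-stable affine chart at $Y$ being an affine space; this forces $P_Y$ to be a maximal parabolic and imposes a precise compatibility between the colors attached to $\Q_{\geq 0}$ and the simple root distinguishing $P_Y$. The analogous statement holds at $Z$. Thus the problem reduces to enumerating all pairs $(G,P_Y,P_Z)$ of maximal parabolic subgroups of a simple $G$ for which such compatible data exist; the general reductive case reduces to the simple case since a central torus acts trivially on a projective variety of Picard number one.

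The hardest step is the enumeration itself: one must inspect each Dynkin diagram together with each pair of fundamental weights $(\omega_i,\omega_j)$, compute the action of the Levi Weyl groups $W_{P_Y}$ and $W_{P_Z}$ on the weight lattice, and verify when the color compatibility and $\rho(X)=1$ can be simultaneously satisfied. This inspection should leave precisely the five families in (1)--(5). Finally, the non-reductivity of $\operatorname{Aut}(X)^\circ$ and the two-orbit structure would be established case by case by exhibiting additional unipotent one-parameter subgroups via the Demazure--Cox description of automorphism groups of complete spherical varieties, and then identifying the two $\operatorname{Aut}(X)$-orbits with the open $G$-orbit and its complement. The main obstacles are this combinatorial enumeration and the subsequent automorphism computation, both of which rely on delicate case-specific properties of the individual root systems.
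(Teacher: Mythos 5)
This theorem is not proved in the paper; it is imported verbatim from Pasquier \cite{Pasquier_horospherical_classification} and used as a black box. So there is no ``paper's own proof'' to compare against, and the relevant benchmark is Pasquier's argument.

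Your high-level strategy matches Pasquier's: reduce to simple $G$, classify via the Luna--Vust theory of spherical (here horospherical) embeddings, bound the rank, translate smoothness into combinatorics of colored fans and the local structure theorem, and finish by an explicit enumeration. However, several steps are stated as if they were routine when they are in fact the substance of the proof, and one step has a gap. The claim that $r\geq 2$ forces $\rho(X)\geq 2$ is not a general fact that falls out of ``the colored fan contributes at least two independent classes'': the Picard group of a complete spherical variety is the cokernel of a map from $B$-eigencharacters to the free group on $G$-stable divisors and colors, and whether this cokernel has rank $\geq 2$ depends delicately on how many colors the fan swallows. Pasquier obtains the rank bound by a more careful analysis of this exact sequence together with smoothness, not by a dimension count on $N_\Q$. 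Likewise, the statement that smoothness at $Y$ ``forces $P_Y$ to be maximal parabolic'' is a consequence of Picard number one plus the combinatorial smoothness criterion, not of smoothness alone, and one also needs the uniqueness statement (that there is at most one smooth projective $G/H$-embedding of Picard number one, which is Pasquier's Lemma~1.2, also cited in the present paper) before the classification can be phrased as a list of triples $(G,P_Y,P_Z)$. Finally, the actual enumeration and the identification of $\mathrm{Aut}^\circ(X)$ as non-reductive with two orbits, which you defer to ``case-specific properties of the individual root systems'' and a Demazure--Cox style description, is where nearly all of the work lies; the proposal gives no indication of how the five families drop out and why nothing else survives. So the outline is faithful to the literature, but as a proof it has concrete gaps in the rank bound and leaves the decisive enumeration entirely unverified.
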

\begin{bem}
	In our notation $P(\omega_i)$ will always be the maximal parabolic subgroup $P_{S\setminus \alpha_i}$ for the simple root $\alpha_i$ associated to the fundamental weight $\omega_i$.
\end{bem}
\begin{lem}\cite[Lemma 1.2]{Pasquier_horospherical_classification}
	Let $G/H$ be a horospherical homogeneous space. Up to isomorphism of varieties, there exists at most one smooth projective $G/H$-embedding with Picard number one.
\end{lem}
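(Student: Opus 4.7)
The plan is to invoke the Luna--Vust classification of spherical embeddings specialised to the horospherical case, and then to translate the three hypotheses (smooth, projective, Picard number one) into combinatorial constraints that leave essentially no freedom. Since $G/H$ is horospherical with $P = N_G(H)$, the fibration $G/H \to G/P$ is a torus bundle with fiber $P/H$, and one sets $N = \operatorname{Hom}(P/H\text{-characters}, \mathbb{Z})$ together with $N_\mathbb{Q} = N \otimes \mathbb{Q}$. I would then record the finite set $\mathcal{D}$ of colors, i.e.\ the $B$-stable prime divisors of $G/H$, together with the map $\mathcal{D} \to N_\mathbb{Q}$ sending each color to its image under the valuation determined by the associated $B$-semi-invariant. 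The Luna--Vust theorem asserts that $G/H$-embeddings correspond bijectively to colored fans in $N_\mathbb{Q}$.

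Next I would translate the three hypotheses into fan-theoretic language. Projectivity forces the support of the colored fan to cover $N_\mathbb{Q}$, so the fan is complete. Smoothness of $X$ forces every maximal cone $(\sigma, \mathcal{F})$ of the colored fan to be \emph{smooth}, in the sense that the primitive generators of $\sigma$ together with the images of the colors in $\mathcal{F}$ form part of a $\mathbb{Z}$-basis of $N$. Finally, the Picard rank can be read off the exact sequence
\begin{equation*}
\mathfrak{X}(P) \longrightarrow \mathbb{Z}^{\mathcal{D}} \oplus \mathbb{Z}^{\{G\text{-stable prime divisors}\}} \longrightarrow \operatorname{Pic}(X) \longrightarrow 0,
\end{equation*}
so that $\rho(X) = 1$ pins down the total number of colors plus $G$-stable rays equal to $\operatorname{rk}\mathfrak{X}(P)+1$.

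Having recorded these three constraints, I would argue combinatorially that they leave at most one colored fan. Since $N_\mathbb{Q}$ must be covered by smooth cones whose primitive generators (rays and colors) number exactly one more than $\dim N_\mathbb{Q}$, the fan must consist of the cones obtained from a single simplex whose vertices are the colors of $G/H$ together with (at most) one additional lattice ray; the direction of this extra ray is forced by completeness once the colors are fixed, because the sum of the vectors assigned to the colors and to the extra ray must sit in $\mathfrak{X}(P)^\perp$ in order for the Picard number to drop to one. Smoothness then determines the integral length of the new ray uniquely. Thus the colored fan, and hence the embedding $X$, is unique up to $G$-isomorphism.

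The main obstacle will be verifying that the three conditions really do cut out a single colored fan rather than a small finite family. Concretely, one has to exclude the possibility of several distinct smooth completions of the data $(N, \mathcal{D})$ with one ray: this requires checking that the colors of $G/H$ already span a simplex whose complementary direction is uniquely determined by $\mathfrak{X}(P)^\perp$, and that the primitivity condition then leaves no integer freedom for the extra ray. This is precisely where the \emph{horospherical} hypothesis (as opposed to general spherical) is crucial, because for horospherical $G/H$ the map $\mathcal{D} \to N_\mathbb{Q}$ is very rigid — the colors are indexed by simple roots not annihilating the character lattice of $P/H$, and their images are computed explicitly from the coroot data — so one can carry out the verification case by case using the Dynkin-type combinatorics already appearing in Pasquier's classification.
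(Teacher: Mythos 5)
First, a point of comparison: the paper does not prove this statement at all --- it is quoted verbatim from Pasquier --- so there is no in-paper argument to measure your proposal against. Your Luna--Vust strategy is the standard (and essentially Pasquier's) route, but as written it contains concrete errors and a genuine gap at the decisive step.

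The lattice in your Picard-number sequence is wrong: the relations among the $B$-stable prime divisors come from the divisors of the $B$-semi-invariant rational functions on $G/H$, whose weight lattice is $M=\mathfrak{X}(P/H)$, of rank equal to the rank $n$ of $G/H$ (i.e.\ $\dim N_\Q$), not $\mathfrak{X}(P)$. The correct consequence of $\rho(X)=1$ is that the number of $G$-stable rays plus the number of colors equals $n+1$. Since a complete fan in $N_\Q\cong\Q^n$ has at least $n+1$ rays, this forces the colored fan to have exactly $n+1$ rays, all colors to belong to it with pairwise distinct images, and the fan to be the face fan of a simplex. The condition that pins down the remaining rays is not that ``the sum of the vectors sits in $\mathfrak{X}(P)^\perp$'' --- that condition has no meaning here --- but that smoothness of \emph{every} maximal cone of a complete fan with exactly $n+1$ rays forces the $n+1$ primitive ray generators to sum to zero. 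When $G$ is semisimple one has $n\le|\mathcal{D}|$ (because $M\subseteq\mathfrak{X}(P_I)\cong\Z^{S\setminus I}$), so at most one ray is not a color and is then uniquely determined; but when $G$ has a positive-dimensional connected centre (already for $G=T$, $H=\{e\}$, where every $\PZ^n$-fan occurs) several $G$-stable rays remain free, the colored fan is genuinely \emph{not} unique, and your conclusion ``the colored fan, and hence the embedding, is unique'' is false as stated. The lemma only asserts uniqueness up to abstract isomorphism of varieties, and the missing step is to show that the residual freedom in the fan is absorbed by automorphisms of $(N,\mathcal{D}\to N_\Q)$ induced by automorphisms of $G/H$. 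Finally, your proposed fallback --- verifying the hard case by the Dynkin combinatorics ``already appearing in Pasquier's classification'' --- is circular, since this lemma is an input to that classification, not a consequence of it.
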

In the sequel, we will be only interested in the cases which are not homogeneous because the cobordism for homogeneous varieties has been studied before. Therefore we recall the construction from \cite[Section 1.3]{Perrin_Geometry_of_horospherical_varieties}. 

Let $X$ be a smooth projective horospherical but non homogeneous variety of Picard number one with associated triple $(G,P_Y,P_Z)$. In this case, we denote the previous triple also by $(G,P(\omega_Y),P(\omega_Z))$ for the corresponding fundamental weights $\omega_Y$ and $\omega_Z$. Furthermore, the dense orbit is given by $G/H=G\cdot [v_Y+v_Z]\subseteq \PZ(V_Y\oplus V_Z)$ where $V_Y$ and $V_Z$ are the irreducible $G$-representations with highest weights $\omega_Y$ and $\omega_Z$ and the corresponding highest weight vectors $v_Y$ and $v_Z$. We conclude by the construction that $P_Y$ and $P_Z$ are the stabilisers of $[v_Y]$ and $[v_Z]$ in $\PZ(V_Y)$ and $\PZ(V_Z)$ and that $Y$ and $Z$ are the $G$-orbits of $[v_Y]$ and $[v_Z]$ in $\PZ(V_Y)$ and $\PZ(V_Z)$, respectively.

Now, we will be analysing the $T$-stable curves and the fixed point subschemes $X^{T'}$ for some given $X$ in order to be able to use Theorem \ref{EquivCobThmSpherical} to obtain the rational equivariant cobordism of $X$. In the previous section, we have already seen how to determine the $T$-stable curves in the closed orbits $G/P_Y$ and $G/P_Z$ which are flag varieties. Next, we will analyse the $T$-stable curves meeting the dense open orbit $G/H$ for any smooth projective horospherical variety $X$ of Picard number one. We will use the diagram
\begin{equation}\label{Diagram_horospherical}
\begin{tikzcd}[]
& G/H \arrow[d,"\pi"] & \\
& G/(P_Y\cap P_Z) \arrow[dl,"p_Y",labels=above left] \arrow[dr,"p_Z",labels=above right] & \\
G/P_Y & & G/P_Z
\end{tikzcd}
\end{equation}
where $\pi$ is the corresponding $\C^\ast$-bundle. 
\begin{defi}
	Let $C$ be a $T$-stable irreducible curve in the dense open orbit $G/H$. Then we define $S:=\pi^{-1}(\pi(C))$ to be the preimage of $\pi(C)$.
\end{defi} 
\begin{lem}\label{Two_cases_for_S}
	Let $C$ be a $T$-stable irreducible curve in the dense open orbit $G/H$. Then $S$ is $T$-stable and given by one of the following cases.
	\begin{enumerate}[(i)]
		\item $S$ is the curve $C$ itself.
		\item $S$ is a surface containing $C$.
	\end{enumerate}
\end{lem}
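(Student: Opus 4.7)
The plan is to derive both the $T$-stability and the dichotomy from two soft facts: the equivariance of $\pi$, and the fact that it is a $\C^\ast$-bundle, so all of its fibers are irreducible of dimension one. First I would observe that the quotient map $\pi:G/H\to G/(P_Y\cap P_Z)$ is $G$-equivariant, hence in particular $T$-equivariant. Therefore, if $C$ is $T$-stable, then its image $\pi(C)$ is a $T$-stable closed subset of $G/(P_Y\cap P_Z)$, and the preimage $S=\pi^{-1}(\pi(C))$ of a $T$-stable set under a $T$-equivariant morphism is again $T$-stable. This takes care of the first half of the statement.

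For the dichotomy I would split on the dimension of $\pi(C)$. Since $C$ is an irreducible curve and $\pi$ is a morphism of varieties, $\pi(C)$ is irreducible of dimension either $0$ or $1$. If $\dim\pi(C)=0$, then $\pi(C)$ is a single closed point and $S$ is the fiber of $\pi$ over that point, which is irreducible of dimension one because $\pi$ is a $\C^\ast$-bundle. Since $C\subseteq S$ and both are irreducible of the same dimension, we conclude $C=S$, which is case (i). If $\dim\pi(C)=1$, then $S$ is a $\C^\ast$-bundle over an irreducible curve, hence a connected two-dimensional variety containing $C$, giving case (ii).

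There is essentially no obstacle in this argument: the only subtlety worth spelling out is the equality $C=S$ in case (i), which uses irreducibility of $C$ together with the fact that the fiber of a $\C^\ast$-bundle is an irreducible curve, so no proper closed subset of the fiber can have dimension one. Everything else is formal from the definition of $S$ and the equivariance of $\pi$.
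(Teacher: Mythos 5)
Your proposal is correct and follows essentially the same route as the paper: both split on whether $\pi(C)$ is a point or a curve and then invoke the $\C^\ast$-bundle structure of $\pi$ to identify $S$ as the fiber (hence equal to $C$) or as a two-dimensional surface. Your derivation of $T$-stability as the preimage of a $T$-stable set under the $T$-equivariant map $\pi$ is a slightly cleaner packaging of the explicit computation $\pi(tgH)=t\pi(gH)\in\pi(C)$ carried out in the paper, which also records extra information (explicit representatives of the fibers and curves via the Weyl group action) used later but not needed for the statement itself.
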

\begin{proof}
	Let $C$ be a given $T$-stable irreducible curve in the dense open orbit $G/H$. Then, $\pi(C)$ is also $T$-stable. The following two cases can occur for $\pi(C)$. 
	\begin{enumerate}[(i)]
		\item $\pi(C)=\{\ast\}$ is a point. Without loss of generality, we can choose this point to be the $B$-fixed point in $G/P:=G/(P_Y\cap P_Z)$. The $B$-fixed point is $1\cdot P/P$ and therefore the closure $\overline{C}\subseteq X$ of the fiber $\pi^{-1}(1\cdot P/P)=C$ is the line joining the $B$-fixed points $1\cdot P_Y/P_Y=[v_Y]\in Y$ and $1\cdot P_Z/P_Z=[v_Z]\in Z$ because $\pi$ is a $\C^\ast$-bundle and $B$-fixed points are mapped to $B$-fixed points via the projections $p_Y$ and $p_Z$. The other lines will be obtained by the Weyl group action. Those lines are $T$-stable by assumption.
		\item $\pi(C)$ is a $T$-stable irreducible curve. Without loss of generality, we can choose $\pi(C)=Z_\alpha\cdot P/P$ for some positive root $\alpha$ which is not in $R^+_P$ where $Z_\alpha$ is the 3-dimensional subgroup of $G$ whose Lie algebra is $\mathfrak{g_\alpha}\oplus\mathfrak{g_{-\alpha}}\oplus[\mathfrak{g_\alpha},\mathfrak{g_{-\alpha}}]$. This curve joins the $B$-fixed point $x(1)$ and $x(s_\alpha)=s_\alpha\cdot P/P$. Then we obtain a two-dimensional surface $S:=\pi^{-1}(\pi(C))$ because $\pi$ is a $\C^\ast$-bundle. This surface $S$ contains $C$ and is $T$-stable. Indeed, for any $gH\in S$ we have
		\begin{align*}
			\pi(tgH)=t\pi(gH)\in \pi(C)=\pi\pi^{-1}\pi(C)=\pi(S).
		\end{align*}Therefore, $tgH\in \pi^{-1}\pi(C)=S$ holds which proves the claim of $S$ being $T$-stable. The other curves are obtained by the Weyl group action.
	\end{enumerate} 
\end{proof}
\begin{lem}
	Any surface in a connected component of $X^{T'}$ for a singular codimension one subtorus $T'=\Ker(\alpha)^0$ for some positive root $\alpha$ is of the form $\overline{S}\subseteq X$ for some $T$-stable curve $C$ and $S=\pi^{-1}(\pi(C))$.
\end{lem}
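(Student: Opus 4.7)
The strategy is to show that any two-dimensional connected component $\Sigma$ of $X^{T'}$ must meet the open $G$-orbit $G/H$, and then to identify $\Sigma^0 := \Sigma \cap (G/H)$ with a saturated surface $\pi^{-1}(\pi(C))$.

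First, I would analyse a $T$-fixed point $x$ of $\Sigma$. By Proposition \ref{componentsX^T'} the surface $\Sigma$ is isomorphic to $\PZ^2$ or to a Hirzebruch surface $\F_n$, hence smooth, so $T_x \Sigma = (T_xX)^{T'}$ is two-dimensional. Since $X^T \subseteq Y \cup Z$, we may assume $x \in Y$. The smoothness of $X$ and $Y$, together with the realisation of $X$ as a compactification of the $\C^\ast$-bundle $\pi$, yields a decomposition $T_xX = T_x Y \oplus L$ where $L$ is the one-dimensional normal direction, carrying a definite $T$-character inherited from the $\C^\ast$-fiber of $\pi$. The tangent weights on $T_xY$ are $G$-roots translated by the Weyl group element representing $x$; since the root systems in Pasquier's classification (types $B_n,C_n,F_4,G_2$) are reduced, the only roots proportional to $\alpha$ are $\pm\alpha$, so at most one weight on $T_xY$ is a multiple of $\alpha$. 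For $(T_xX)^{T'}$ to be two-dimensional, the normal weight on $L$ must therefore also be a multiple of $\alpha$, and consequently $\Sigma$ exits $Y$ along $L$. Hence $\Sigma^0 = \Sigma \cap (G/H)$ is non-empty and open dense in $\Sigma$.

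Next, I would project $\Sigma^0$ via $\pi$. Its image $\pi(\Sigma^0)$ is a $T'$-fixed irreducible subvariety of the flag variety $G/(P_Y\cap P_Z)$. Applying the same root-theoretic observation to tangent spaces at $T$-fixed points of this flag variety shows that $T'$-fixed irreducible subvarieties there are at most one-dimensional, so $\pi(\Sigma^0)$ is a $T$-stable curve. Setting $S := \pi^{-1}(\pi(\Sigma^0))$ produces an irreducible two-dimensional subvariety of $G/H$ containing $\Sigma^0$; equality of dimensions and irreducibility force $\Sigma^0 = S$. Choosing any $T$-stable curve $C \subseteq \Sigma^0$ whose $\pi$-image is $\pi(\Sigma^0)$ (for example a $T$-stable section of $\pi|_{\Sigma^0}$, whose existence follows from a Bialynicki-Birula argument for the one-dimensional torus $T/T'$ acting on $\Sigma$), we obtain $\pi(C) = \pi(\Sigma^0)$ and hence $S = \pi^{-1}(\pi(C))$. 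Taking closures in $X$ yields $\Sigma = \overline{\Sigma^0} = \overline{S}$.

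The main obstacle is the first step, namely ruling out that $\Sigma$ lies entirely in a closed $G$-orbit $Y$ or $Z$. This is handled uniformly by the root-proportionality observation combined with the fact that the normal direction along $Y$ in $X$ carries a definite $T$-weight coming from the $\C^\ast$-bundle structure of $\pi: G/H \to G/(P_Y\cap P_Z)$; the remaining identifications are then immediate geometric consequences of $\pi$ being a $T$-equivariant $\C^\ast$-bundle.
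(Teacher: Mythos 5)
Your overall strategy coincides with the paper's: show that a two-dimensional component $\Sigma$ of $X^{T'}$ must meet the open orbit $G/H$, then use the $\C^\ast$-bundle $\pi$ to identify $\Sigma\cap G/H$ with $\pi^{-1}(\pi(C))$ for a $T$-stable curve $C$. The second half is essentially the paper's argument: the paper obtains $\pi$-saturation directly from the $T$-stability of $X^{T'}$ and the transitivity of $T$ on the fibres of $\pi$, and then extracts $C$ from a one-dimensional $T$-orbit that is not a fibre; your dimension count $\Sigma\cap G/H=\pi^{-1}(\pi(\Sigma\cap G/H))$ and your Bialynicki--Birula section accomplish the same. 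Where you genuinely differ is in the first step: the paper argues globally that a surface component of $X^{T'}$ is covered by infinitely many $T$-stable curves while the flag varieties $Y$ and $Z$ contain only finitely many, so $\Sigma\not\subseteq Y\cup Z$; you argue infinitesimally with tangent weights at a $T$-fixed point. Both routes work.

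However, your first step contains a false claim: the decomposition $T_xX=T_xY\oplus L$ with $L$ a one-dimensional normal direction. In the non-homogeneous cases of Pasquier's classification the closed orbits have codimension at least two in $X$ (a $G$-stable divisor in the boundary would raise the Picard number above one); for instance $X=\IG(2,5)$ has dimension five while $Y\cong\Sp_4/P(\omega_2)$ and $Z\cong\PZ^3$ are both three-dimensional. So $L$ is not a line, and the sentence asserting that ``the normal weight on $L$ must therefore also be a multiple of $\alpha$'' does not make sense as written. The gap is repairable, because the correct part of your computation already suffices: the tangent weights of $T_xY$ at $x=wP_Y/P_Y$ are $w$-translates of roots lying in a single (negative) half of a reduced root system, so at most one of them is proportional to $\alpha$ and $(T_xY)^{T'}$ is at most one-dimensional; since $\Sigma$ is smooth with $T_x\Sigma=(T_xX)^{T'}$ of dimension two, $T_x\Sigma\not\subseteq T_xY$ and hence $\Sigma\not\subseteq Y$ (and likewise for $Z$), which is all you need. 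You should reformulate the step this way and delete the appeal to a one-dimensional normal bundle.
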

\begin{proof}
	Without loss of generality, we assume that $X^{T'}$ is connected. We know that $X^{T'}\cap G/H \neq \emptyset$ because the two closed orbits $Y\cong G/P_Y$ and $Z\cong G/P_Z$ contain only finitely many $T$-stable curves. Now let $x\in X^{T'}$. Then we have $tx=tt'x=t'tx$  for all $t,t'\in T$ which implies that $X^{T'}$ is $T$-stable. We have $X^{T'}\cap G/H\subseteq \pi^{-1}\pi(X^{T'}\cap G/H)$ and the reversed inclusion is also true because $X^{T'}$ is $T$-stable and $T$ acts transitively on the fibers of $\pi$ as $P/H$ is a quotient of $T$. Therefore, the whole fiber must be in $X^{T'}\cap G/H$. Furthermore, $X^{T'}$ has only zero- and one-dimensional $T$-orbits since $T/T'$ is one-dimensional. The image under $\pi$ of those orbits is either a $T$-fixed point or the $T$-stable irreducible curve $\pi(X^{T'}\cap G/H)$. We conclude that there must be a $T$-stable curve $C\subseteq X^{T'}\cap G/H$ such that $X^{T'}\cap G/H=\pi^{-1}\pi(C)$ because if the $T$-orbits were only the fibers then there would be infinitely many $T$-fixed points in $G/P$.
\end{proof}
\begin{bem}
	Let a connected component of $X^{T'}$ be given for some codimension one subtorus $T'$. As mentioned already in the previous proof these are $T$-stable with only zero- and one-dimensional $T$-orbits since $T/T'$ is one-dimensional. To be more precise, either an orbit is a $T$-fixed point or a one-dimensional $T$-orbit $T/T'\cdot x$ for some $x\in X^{T'}$. Therefore, the stabilisers of $x$ in $T$ are subtori of codimension one or zero.
\end{bem}
In the following, we want to analyse which surfaces $S$ are a connected component in some $X^{T'}$ for some codimension one subtorus $T'$. Therefore, we formulate the following lemma.
\begin{defi}
	Let $X$ be a smooth projective horospherical $G$-variety of Picard number one of the form $(G,P(\omega_Y),P(\omega_Z))$. Then we denote by $\chi:=\omega_Y-\omega_Z$ the difference of the two fundamental weights $\omega_Y$ and $\omega_Z$.
\end{defi}
\begin{lem}
	For any smooth projective horospherical variety $X$ of Picard number one we have the following properties.
	\begin{enumerate}[(1)]
		\item The only $T$-stable curves in $X$ meeting the open orbit $G/H$ occurring as a connected component of $X^{T'}$ for some codimension one subtorus $T'$ are of the form $\overline{\pi^{-1}(z)}$ where $z\in G/(P_Y\cap P_Z)$ is a $T$-fixed point.
		\item The surfaces occurring in $X^{T'}$ only arise from codimension one subtori of the form $T'=\Ker(w\alpha)^0=\Ker(w\chi)^0$ for some positive root $\alpha$ and some $w\in W$. 
	\end{enumerate}
\end{lem}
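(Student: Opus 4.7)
Let $C$ be a $T$-stable curve in $X$ which meets $G/H$ and which is a connected component of $X^{T'}$ for a codimension-one subtorus $T'$. I would consider $C' = C \cap G/H$ and its image under $\pi$, applying Lemma \ref{Two_cases_for_S} to split into two cases. If $\pi(C')$ is a $T$-stable curve, then by the same reasoning as in that lemma the preimage $S' = \pi^{-1}(\pi(C'))$ is a $T$-stable surface; moreover, since $T$ acts transitively on the $\C^\ast$-fibers of $\pi$, once $T'$ fixes a point of the base pointwise it fixes the whole fiber above it, so $S' \subseteq X^{T'}$. This produces a two-dimensional piece of $X^{T'}$ strictly containing $C'$, contradicting the assumption that $C$ is a connected component. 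Therefore $\pi(C')$ must be a single point $z$, necessarily $T$-fixed because $\pi(C')$ is $T$-stable, and then $C' = \pi^{-1}(z)$ and $C = \overline{\pi^{-1}(z)}$, proving (1).

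\textbf{Plan for part (2).} Let $S \subseteq X^{T'}$ be a two-dimensional connected component. Since the closed orbits $Y \cong G/P_Y$ and $Z \cong G/P_Z$ are flag varieties and a singular $T' = \Ker(\beta)^0$ satisfies that $C_G(T')$ is the product of $T'$ with a rank-one subgroup, the $T'$-fixed loci of $Y$ and $Z$ contribute only curves in the present horospherical Picard-one setting; hence $S$ meets $G/H$ in a two-dimensional open piece. Repeating the preimage argument from the preceding lemma, $S \cap G/H = \pi^{-1}(D)$ for some $T$-stable irreducible curve $D \subseteq G/(P_Y \cap P_Z)$. By the Fulton--Woodward description recalled in Section \ref{Tinvariant_curves_flags}, after applying a suitable element of $W$ one may write $D = w \cdot Z_\alpha P/P$ for some $w \in W$ and some positive root $\alpha \notin R^+_{P_Y \cap P_Z}$. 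The tangent weight of $T$ on $D$ at the fixed point $x(w)$ is then $w\alpha$ up to sign, so $T'$ acting trivially on $D$ forces $T' \subseteq \Ker(w\alpha)^0$, and by codimension comparison this inclusion is an equality.

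\textbf{The remaining equality} $T' = \Ker(w\chi)^0$ comes from the fiber direction. Using the construction $G/H = G \cdot [v_Y + v_Z] \subseteq \PZ(V_Y \oplus V_Z)$, the fiber over $x(1)$ is $\{[t v_Y + v_Z] : t \in \C^\ast\}$, on which $T$ acts through $\omega_Y - \omega_Z = \chi$ because $v_Y$ and $v_Z$ are highest-weight vectors of weights $\omega_Y$ and $\omega_Z$ respectively. Choosing a lift $\dot w \in N_G(T)$ of $w$, the fiber over $x(w) = \dot w \cdot x(1)$ inherits a $T$-action through the character $w\chi$. Since $S$ contains this entire fiber pointwise fixed by $T'$, one obtains $T' \subseteq \Ker(w\chi)^0$, and again equality by codimension. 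Combining the two equalities yields $T' = \Ker(w\alpha)^0 = \Ker(w\chi)^0$. The main obstacle I foresee is being rigorous about the $W$-equivariance of the fiber weights of $\pi$---in particular that the fiber over $\dot w \cdot x(1)$ is acted on through $w\chi$ rather than a shifted character---and ruling out $T'$-fixed surfaces inside the closed orbits $Y$ and $Z$; once these points are pinned down via the explicit embedding into $\PZ(V_Y \oplus V_Z)$, everything else collapses to codimension comparisons.
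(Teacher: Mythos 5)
Your approach is correct and takes a genuinely different route from the paper. The paper proves (1) and (2) at once by writing a general point of $S=\pi^{-1}\pi(C)$ explicitly as $u_{-w\alpha}(x')t'wH$ and computing its $T$-stabiliser directly, finding $\Ker(w\alpha)\cap\Ker(w\chi)$, and then arguing purely from the codimension of that stabiliser: codimension two rules out curve components and surface components alike, while codimension one forces $\Ker(w\alpha)^0=\Ker(w\chi)^0=T'$. You instead separate the base and fiber contributions geometrically: for (1) a curve component $C$ with $\pi(C')$ a curve would force $T'$ to fix the whole surface $\pi^{-1}\pi(C')$, contradicting that $C$ is a connected component; for (2) you read $T'\subseteq\Ker(w\alpha)^0$ off the tangent weight of the base curve $D$ and $T'\subseteq\Ker(w\chi)^0$ off the fiber over a $T$-fixed point, then conclude by codimension. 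Your route avoids the root-subgroup manipulation and is cleaner, while the paper's single computation is more explicit and produces the stabiliser on the nose. One sentence should be repaired: ``since $T$ acts transitively on the $\C^\ast$-fibers of $\pi$, once $T'$ fixes a point of the base pointwise it fixes the whole fiber above it'' is not literally true, because the left $T$-action does not stabilise a fiber over a non-$T$-fixed base point. What you actually need (and what is true) is that $C'$ meets every fiber of $S'$ since $\pi|_{C'}\twoheadrightarrow\pi(C')$, and that the right $P/H$-action — transitive on fibers, with $P/H$ a quotient of $T$ — commutes with the left $T$-action because $T$ is abelian, so $T'$ fixing one point of a fiber forces it to fix the entire fiber. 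The two remaining worries you flag (the fiber weight over $\dot w\cdot x(1)$ being $w\chi$, and $Y^{T'},Z^{T'}$ being at most one-dimensional) are the right points to pin down, and both go through exactly as you anticipate, the second because $(G/P)^{T'}$ is a union of $C_G(T')/T'$-homogeneous spaces, hence of dimension at most one.
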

\begin{proof}
	As above, we have the $B$-fixed point $1\cdot P/P$ in $G/P:=G/(P_Y\cap P_Z)$. We need to consider the previously discussed case from Lemma \ref{Two_cases_for_S} (ii). Therefore, we assume that there exists a $T$-stable curve $C\subseteq G/H$ such that a general point in the $T$-stable curve $\pi(C)$ has the form $w\cdot u_{-\alpha}(x)\cdot P/P$ where $u_{-\alpha}(x)$ denotes the corresponding element in the root subgroup $U_{-\alpha}$. A general point in $S=\pi^{-1}\pi(C)$ has the form 
	\begin{align*}
	w\cdot u_{-\alpha}(x)tH=u_{-w\alpha}(x')wtH=u_{-w\alpha}(x')wtw^{-1}wH=u_{-w\alpha}(x')t'wH
	\end{align*}
	for $t\in P/H=\C^\ast$. Now we consider the $T$-action on those points for $z\in T$:
	\begin{align*}
	zu_{-w\alpha}(x')t'wH&=u_{-w\alpha}((w\alpha)(z)^{-1}x')zt'wH\\
	&=u_{-w\alpha}((w\alpha)(z)^{-1}x')t'zwH\\
	&=u_{-w\alpha}((w\alpha)(z)^{-1}x')t'ww^{-1}zwH.
	\end{align*} 
	This implies that a point $z$ acts trivially if and only if $w^{-1}zw\in H=\Ker(\chi)$ and $z\in \Ker(w\alpha)$ hold. This implies by the Weyl group action on the character group that this is equivalent to $z\in \Ker(w\chi)\cap \Ker(w\alpha)$.
	
	If $\Ker(w\chi)^0\neq \Ker(w\alpha)^0$ holds, then $\Ker(w\chi)^0\cap \Ker(w\alpha)^0$ has codimension two in $T$. Therefore, we obtain a $T$-stable surface $S$ or a $T$-stable curve in $\pi^{-1}\pi(C)$. It remains to check whether those are fixed by some codimension one subtorus. If one of those was a connected component of $X^{T'}$, then the stabiliser of any point in $X^{T'}$ would have at most codimension one in $T$, but as we computed above, the stabiliser of a general point in $S$ and therefore also in every potential $T$-stable curve in $\pi^{-1}\pi(C)$ is precisely $\Ker(w\chi)\cap \Ker(w\alpha)$. Therefore, the stabiliser of a general point would be of codimension two in $T$ and thus, the $T$-stable surface $S$ is not a connected component of $X^{T'}$ and there exists no $T$-stable curve in $\pi^{-1}\pi(C)$ which is a connected component of $X^{T'}$. 
	
	If $\Ker(w\alpha)=\Ker(w\chi)$ holds, then we have $\Ker(w\chi)^0=\Ker(w\alpha)^0=T'$ and $S$ is some connected component of $X^{T'}$ because $z$ acts trivially on a general point of $S$. This implies property (2). 
	
	Furthermore, there cannot be a connected component of $X^{T'}$ which is a $T$-stable curve in $G/H$ coming from Lemma \ref{Two_cases_for_S} (ii) because we obtain a surface as a connected component. This implies property (1). Thus, the only $T$-stable curves meeting the open orbit which are not contained in a connected component $S\subseteq X^{T'}$ of dimension two might be the lines described in Lemma \ref{Two_cases_for_S} (i) if they are not already contained in some $S$. 
\end{proof}
\textbf{Algorithm:} We analyse the occuring surfaces in $X^{T'}$. As we have seen above, we need to consider roots $\alpha$ which are multiples of the difference $\chi$ of the two fundamental weights $\omega_Y$ and $\omega_Z$ up to the Weyl group action. After that, we look at the curves in the closed orbits $Y$ and $Z$. Up to Weyl group action these are given by $Z_\alpha [v_{\omega_Y}]$ which connect $s_\alpha[v_{\omega_Y}]$ and $[v_{\omega_Y}]$ in $Y$ and similarly in $Z$. Thus, we need to compute $s_\alpha(\omega_Y)=\omega_Y-(\alpha^\vee,\omega_Y)\alpha$ and similarly for $\omega_Z$. Then we will know how many $T$-fixed points we have in $X^{T'}$ and in which orbits they occur. If we obtain 3 $T$-fixed points then we obtain a projective plane and if we obtain 4 $T$-fixed points, then we will have a Hirzebruch surface $\F_n$. We remark that $s_\alpha(\omega_Y)=\omega_Y$ holds if and only if $(\alpha^\vee,\omega_Y)$ vanishes.

We consider some examples of the classification of Pasquier which are given by triples $(G,P_Y,P_Z)$. We will study their geometry using the above algorithm and the classification of Bourbaki \cite{Bourbaki_4-6}. 
\begin{bei}\label{Classification_surfaces}
	In this example we will discuss three of the possible cases from Proposition \ref{Pasquier_classification}.
	\begin{enumerate}[(i)]
		\item Firstly, we consider type (1), i.e. $(B_n,P(\omega_{n-1}),P(\omega_n))$ for $n\geq 3$. The fundamental weights are given by  
		\begin{align*}
		\omega_{n-1}&=\varepsilon_1+...+\varepsilon_{n-1}\\&=\alpha_1+2\alpha_2+...+(n-2)\alpha_{n-2}+(n-1)(\alpha_{n-1}+\alpha_n)\text{ and }\\
		\omega_n&=1/2(\varepsilon_1+...+\varepsilon_n)\\&=1/2(\alpha_1+2\alpha_2+...+n\alpha_n)
		\end{align*}
		for $\alpha_i=\varepsilon_i-\varepsilon_{i+1}$, $1\leq i\leq n-1$, and $\alpha_n=\varepsilon_n$.
		Therefore, we have
		\begin{align*}
		\chi&=\omega_{n-1}-\omega_n\\&=1/2(\varepsilon_1+...+\varepsilon_{n-1}-\varepsilon_n)\\&=1/2(\alpha_1+2\alpha_2+...+(n-1)\alpha_{n-1}+(n-2)\alpha_n).
		\end{align*} 
		The positive roots are given by $\varepsilon_i$ for $1\leq i\leq n$ and $\varepsilon_i\pm \varepsilon_j$ for $1\leq i<j\leq n$. This implies that there will not be any surface in $X^{T'}$ because there is no root which is a multiple of $\chi$. 
		\item Secondly, we consider type (3), i.e. $(C_n,P(\omega_m),P(\omega_{m-1}))$ with integers $n\geq 2$ and $m\in [2,n]$. The fundamental weights are given by
		\begin{align*}
		\omega_i&=\varepsilon_1+...+\varepsilon_i\\&=\alpha_1+2\alpha_2+...+(i-1)\alpha_{i-1}+i(\alpha_i+\alpha_{i+1}+...+\alpha_{n-1}+\frac{1}{2}\alpha_n)
		\end{align*}
		for $1\leq i\leq n$ and $\alpha_i=\varepsilon_i-\varepsilon_{i+1}$, $1\leq i\leq n-1$, and $\alpha_n=2\varepsilon_n$. Therefore, we have 
		\begin{align*}
		\chi&=\omega_m-\omega_{m-1}\\&=\varepsilon_m\\&=\alpha_m+...+\alpha_{n-1}+\frac{1}{2}\alpha_n.
		\end{align*} 
		The positive roots are given by $\varepsilon_i\pm\varepsilon_j$ for $1\leq i<j\leq n$ and $2\varepsilon_i$ for $1\leq i\leq n$. Thus, there is a positive root which is a multiple of $\chi$ namely $\alpha:=2\varepsilon_m$. Consequently, we have 
		\begin{align*}
		\alpha^\vee=\frac{2\alpha}{(\alpha,\alpha)}=\frac{2\cdot 2\varepsilon_m}{(2\varepsilon_m,2\varepsilon_m)}=\varepsilon_m
		\end{align*}
		and therefore we obtain
		\begin{align*}
		(\alpha^\vee,\omega_m)=(\varepsilon_m,\varepsilon_1+...+\varepsilon_m)=1
		\end{align*}
		and
		\begin{align*}
		(\alpha^\vee,\omega_{m-1})=(\varepsilon_m,\varepsilon_1+...+\varepsilon_{m-1})=0.
		\end{align*}
		This implies that we have 3 $T$-fixed points and that we obtain a projective plane in $X^{T'}$. We recover the odd symplectic Grassmannian $\IG(m,2n+1)$ and thus, in particular Example \ref{ExampleIG(2,5)} in the case $m=n=2$.
		\item  Lastly, we consider type (5), i.e. the triple $(G_2,P(\omega_1),P(\omega_2))$. The fundamental weights are given by
		\begin{align*}
		\omega_1&=-\varepsilon_2+\varepsilon_3\\&=2\alpha_1+\alpha_2\text{ and }\\
		\omega_2&=-\varepsilon_1-\varepsilon_2+2\varepsilon_3\\&=3\alpha_1+2\alpha_2
		\end{align*} 
		for $\alpha_1=\varepsilon_1-\varepsilon_2$ and $\alpha_2=-2\varepsilon_1+\varepsilon_2+\varepsilon_3$. Therefore, we have 
		\begin{align*}
		\chi&=\omega_1-\omega_2\\&=\varepsilon_1-\varepsilon_3\\&=-\alpha_1-\alpha_2.
		\end{align*}
		The positive roots are given by $\alpha_1,\alpha_2, \alpha_1+\alpha_2,2\alpha_1+\alpha_2,3\alpha_1+\alpha_2$ and $3\alpha_1+2\alpha_2$. Thus, $\alpha:=-\chi$ is a positive root and consequently, we have 
		\begin{align*}
		\alpha^\vee&=\frac{2\alpha}{(\alpha,\alpha)}=\frac{2(\varepsilon_3-\varepsilon_1)}{(\varepsilon_3-\varepsilon_1,\varepsilon_3-\varepsilon_1)}=\varepsilon_3-\varepsilon_1.
		\end{align*}
		Therefore, we obtain 
		\begin{align*}
		(\alpha^\vee,\omega_1)=(\varepsilon_3-\varepsilon_1,-\varepsilon_2+\varepsilon_3)=1
		\end{align*}
		and
		\begin{align*}
		(\alpha^\vee,\omega_2)=(\varepsilon_3-\varepsilon_1,-\varepsilon_1-\varepsilon_2+2\varepsilon_3)=3.
		\end{align*}
		This implies that we have 4 $T$-fixed points and that we obtain a Hirzebruch surface $\F_3$ by Remark \ref{Degree_of_Curves} which ensures that $(\alpha^\vee,\omega_{1})$ and $(\alpha^\vee,\omega_2)$ give us the degrees of the curves in the two closed orbits $Y$ and $Z$, respectively.
	\end{enumerate}
\end{bei}  
After having described the $T$-stable structures on these smooth projective horospherical varieties of Picard number one, we can describe their equivariant algebraic cobordism rings. This will be done using Theorem \ref{EquivCobThmSpherical}.
\begin{bei}\label{Cobordism_classification_all_examples}
	Here, we will give the equivariant cobordism rings of the previous three cases. Therefore, we will in general consider as usual the injective map
	\begin{align*}
	i^\ast:\Omega^\ast_T(X)_\Q\to \Omega^\ast_T(X^T)_\Q.
	\end{align*}
	\begin{enumerate}[(i)]
		\item At first, we consider the case $(B_n,P(\omega_{n-1}),P(\omega_n))$ for $n\geq 3$. For any element $w'\in W/W_{S\setminus \alpha_{n-1}}$ we denote by $y(w'):=w'P(\omega_{n-1})/P(\omega_{n-1})$ the corresponding $T$-fixed point in $Y$ and similarly by $z(w''):=w''P(\omega_n)/P(\omega_n)$ the $T$-fixed point in the closed orbit $Z$ for any $w''\in W/W_{S\setminus \alpha_n}$. The equations for the closed orbits $Y$ and $Z$ are given by 
		\begin{align}\label{Equations_in_Y}
		& &f_{y(w\cdot s_\alpha)}&\equiv f_{y(w)}&&\mod c_1^T(L_{w\omega_{n-1}-ws_\alpha\omega_{n-1}})\\\label{Equations_in_Z}
		& & f_{z(w\cdot s_\beta)}&\equiv f_{z(w)} &&\mod c_1^T(L_{w\omega_n-ws_\beta\omega_n})
		\end{align}
		for $\alpha\in R^+\setminus R^+_{P(\omega_{n-1})},\beta\in R^+\setminus R^+_{P(\omega_{n})}$ and $w\in W$ which is true as the difference of the weights associated to the $T$-fixed points is a multiple of the weight acting on the corresponding curve and we consider rational coefficients. We have seen above that there are no surfaces in this particular case. Therefore, the last equations are given by the lines joining the two closed orbits. These are given by
		\begin{align}\label{Equations_lines_dense_orbit}
		f_{y(w)}\equiv f_{z(w)} \mod c_1^T(L_{w\omega_{n-1}-w\omega_n})
		\end{align}
		for $w\in W$. This describes completely the equivariant algebraic cobordism $\Omega^\ast_T(X)_\Q$ in case (1). 
		\item Secondly, we consider the case $(C_n,P(\omega_m),P(\omega_{m-1}))$ for $n\geq 2$ and $m\in [2,n]$. The equations for the curves in the closed orbits can be obtained as in (\ref{Equations_in_Y}) and (\ref{Equations_in_Z}). Furthermore, the equations from the lines joining the closed orbits can be obtained as in (\ref{Equations_lines_dense_orbit}). As we have seen in Example \ref{Classification_surfaces}, we need to choose $\alpha:=2\varepsilon_m$ to be the positive root which is a multiple of $\chi=\omega_m-\omega_{m-1}$ in order to obtain a surface in $X^{T'}$ for $T'=\Ker(\alpha)^0$. The reflection $s_\alpha$ acts trivially on the $T$-fixed point $z(1)$ and therefore, we obtain the $T$-fixed points $z(1),y(1)$ and $y(s_\alpha)$ in $X^{T'}$. Having a look at the weights acting on the lines in the resulting surface $\PZ^2$, we can identify the $T$-fixed points $z(1),y(1)$ and $y(s_\alpha)$ with $y,x$ and $z$, respectively, where we consider the canonical $T$-action on $\PZ^2$, i.e. $t\cdot [x:y:z]=[tx:y:t^{-1}z]$. For any $w\in W$ this leads to the equation
		\begin{align*}
		(f_{y(w)}-f_{z(w)})+\rho_{1/2}c_1^T(L_{w\alpha})(f_{y(w\cdot s_\alpha)}-f_{y(w)})\equiv 0 \mod c_1^T(L_{w\alpha})^2.
		\end{align*} 
		This completes the description of the equivariant algebraic cobordism in case (3). Furthermore, we remark that we recover precisely the description of the rational equivariant algebraic cobordism of $\IG(2,5)$ from Example \ref{ExampleIG(2,5)} for $m=n=2$.
		\item Lastly, we consider case (5) which is given by the triple $(G_2,P(\omega_1),P(\omega_2))$ for $\omega_1=2\alpha_1+\alpha_2$ and $\omega_2=3\alpha_1+2\alpha_2$. The curves can be described as above for the previous cases. In order to obtain surfaces in $X^{T'}$ we need to choose $\alpha:=-\chi$ by Example \ref{Classification_surfaces}. Therefore, we obtain the $T$-fixed points $y(1),y(s_\alpha),z(1)$ and $z(s_\alpha)$ contained in a Hirzebruch surface $\F_3$ which has been described in Example \ref{Classification_surfaces}. By that example we know that we have a curve of degree 1 in $Y$ and one of degree 3 in $Z$. By verifying the weights we can identify $y(1),y(s_\alpha),z(1)$ and $z(s_\alpha)$ with $x,y,w$ and $z$, respectively, using the notion from Proposition \ref{Brion 7.2.Cob}. For any $w'\in W$ we define $\xi_{w'\cdot s_\alpha}:=(f_{y(w'\cdot s_\alpha)}-f_{z(w'\cdot s_\alpha)})$ and $\xi_{w'}:=(f_{z(w')}-f_{y(w')})$ which leads to the equations
		\begin{align*}
		\rho_{3/2}c_1^T(L_{w'\alpha})\xi_{w'\cdot s_\alpha}+\rho_{-3/2}c_1^T(L_{w'\alpha})\xi_{w'}\equiv 0\mod c_1^T(L_{w'\alpha})^2.
		\end{align*}  
		This completes the description of $\Omega_T^\ast(X)_\Q$ in case (5).
	\end{enumerate} 
\end{bei}
\begin{bem}
	To finish this section, we remark that the computations for the equivariant cobordism of the odd symplectic Grassmannian $\IG(2,5)$ with the geometric description from Example \ref{ExampleIG(2,5)} can be generalised to all the examples of type (3), i.e. to all odd symplectic Grassmannians $\IG(m,2n+1)$ for $n\geq 2$ and $m\in [2,n]$. 
\end{bem}
\section{Equivariant multiplicities at nondegenerate fixed point in cobordism}\label{Section_equiv_Multiplicities}
In this section, we want to generalise some results for equivariant Chow groups from \cite[Section 4]{BrionTorusActions} to equivariant algebraic cobordism. 
\begin{defi}\label{non_degenerate_fixed_points}
	Let $X$ be a scheme with a $T$-action. We call a $T$-fixed point $x\in X$ \textbf{nondegenerate} if the tangent space $T_xX$ contains no nonzero fixed point. Equivalently, $0$ is not a weight for the $T$-module $T_xX$. The weights of this module counted with their equivariant multiplicities will be called the \textbf{weights of $\boldsymbol{x}$ in $\boldsymbol{X}$}. 
\end{defi}
\begin{bem}\cite[Section 4.1]{BrionTorusActions}
	We have $T_x(X^T)=(T_xX)_0$ where $(T_xX)_0$ denotes the sum of the weight subspaces of $T_xX$ with zero weight. Therefore, any $T$-fixed point in a nonsingular $T$-variety is nondegenerate if and only if it is isolated. Thus, for the class of smooth projective and spherical varieties all $T$-fixed points are nondegenerate.   	
\end{bem}
Before we start to prove the main analogues of \cite[Section 4]{BrionTorusActions} we recall two important statements which were proved by Krishna \cite{CobTorus}. Recall that $S(T)[M^{-1}]$ is the graded ring obtained by inverting all non-zero linear forms $\sum_{j=1}^{n}m_jt_j$ which was described in more detail in \cite[Section 6]{CobTorus}. For a smooth $k$-scheme $X$ with a torus action, we denote $\Omega_T^\ast(X)\otimes_{S(T)}S(T)[M^{-1}]$ by $\Omega_T^\ast(X)[M^{-1}]$. 
\begin{thm}\cite[Proposition 3.1]{CobTorus}\label{Self-intersection_formula}
	Let $G$ be a linear algebraic group and $f:Y\to X$ be a regular $G$-equivariant embedding in $G-\boldsymbol{\Sch}_k$ of pure codimension $d$ and let $N_{Y/X}$ denote the equivariant normal bundle of $Y$ inside $X$. Then one has
	\begin{align*}
		f^\ast\circ f_\ast(\eta)=c_d^G(N_{Y/X})(\eta)
	\end{align*}
	for every $\eta\in \Omega^G_\ast(Y)$.
\end{thm}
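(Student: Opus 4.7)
The plan is to reduce the equivariant self-intersection formula to its non-equivariant counterpart via the good-pair / Borel construction used to define $\Omega_\ast^G$, and then pass to the inverse limit.

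First I would fix a sequence $\{(V_j,U_j)\}_{j\geq 0}$ of $l_j$-dimensional good pairs for $G$ as in Theorem \ref{SequenceofGoodpairs}. Since $G$ acts freely on each $U_j$ with quotient a quasi-projective scheme, the mixed-quotient construction $(-)\times^G U_j$ is well-defined on $G$-equivariant morphisms and sends the regular $G$-equivariant embedding $f:Y\to X$ to a morphism
\begin{align*}
f_j:Y\times^G U_j\longrightarrow X\times^G U_j
\end{align*}
which is again a regular closed embedding of pure codimension $d$. Moreover the equivariant normal bundle $N_{Y/X}$ descends to the ordinary normal bundle $N_{f_j}$ of $f_j$, and by the very definition of equivariant Chern classes, $c_d^G(N_{Y/X})$ is represented in the $j$-th component of the inverse system by the ordinary Chern class $c_d(N_{f_j})$.

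Next I would invoke the non-equivariant self-intersection formula in algebraic cobordism, which is part of the formalism of Levine--Morel (the pullback--pushforward comparison for regular embeddings, using the deformation to the normal cone together with axiom $(\Sect)$). Applied to $f_j$ it gives, for any $\eta_j\in\Omega_\ast(Y\times^G U_j)$,
\begin{align*}
f_j^\ast\circ (f_j)_\ast(\eta_j)=c_d(N_{f_j})(\eta_j)
\end{align*}
in $\Omega_\ast(Y\times^G U_j)$. Writing an element $\eta\in\Omega_\ast^G(Y)$ as a compatible family $(\eta_j)_{j\geq 0}$ and using that the equivariant pullback, pushforward, and Chern class operators are defined componentwise on these families and commute with the transition maps in the inverse system, one obtains the identity
\begin{align*}
f^\ast\circ f_\ast(\eta)=c_d^G(N_{Y/X})(\eta)
\end{align*}
in $\Omega_\ast^G(Y)$ by passing to $\varprojlim_j$.

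The main thing to verify, and the step that needs the most care, is that each of the ingredients (equivariant pushforward along $f$, equivariant pullback along $f$, and the operator $c_d^G(N_{Y/X})$) genuinely corresponds to the componentwise operation on the good-pair approximations. For the Chern class this is the definition; for the pullback it follows from the fact that $f_j$ is obtained by base change from $f$ along the quotient map; for the pushforward one needs that $f_j$ is projective whenever $f$ is, which holds because $f$ is a closed immersion. With these compatibilities in place the inverse limit argument is routine and the formula follows.
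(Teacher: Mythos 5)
Your proposal is correct and matches Krishna's own proof of \cite[Proposition 3.1]{CobTorus}: one applies the mixed quotient $(-)\times^G U_j$ along a sequence of good pairs, observes that $f_j$ remains a regular embedding of pure codimension $d$ with normal bundle $N_{f_j}=(N_{Y/X}\times U_j)/G$, invokes the non-equivariant self-intersection formula of Levine--Morel on each approximation, and passes to the inverse limit using the compatibility of pullback, pushforward, and Chern classes with the inverse system. The compatibilities you highlight at the end are precisely the points that need checking, and the argument goes through.
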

\begin{kor}\cite[Corollary 7.3]{CobTorus}\label{Krishna_Corollary_7.3}
	Let $X$ be a smooth projective variety with an action of a torus $T$ of rank $n$. Then the pushforward map $i_\ast: \Omega_\ast^T(X^T)\to \Omega_\ast^T(X)$ becomes an isomorphism after base change to $S(T)[M^{-1}]$.
\end{kor}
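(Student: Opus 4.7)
The strategy is to imitate Brion's proof of the analogous localization theorem in equivariant Chow theory (compare \cite[Theorem 3.3]{BrionTorusActions}), adapted to cobordism via the self-intersection formula of Theorem \ref{Self-intersection_formula}. Concretely, I would run an induction along the Bialynicki--Birula filtration of $X$.

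First, since $X$ is smooth and projective, Theorem \ref{BB_smooth_projective} gives a $T$-filtration $\emptyset=X_{-1}\subsetneq X_0\subsetneq\cdots\subsetneq X_n=X$ together with $T$-equivariant vector bundle maps $\phi_m: W_m=X_m\setminus X_{m-1}\to Z_m$ whose zero sections recover the inclusions $Z_m\hookrightarrow W_m$, where $X^T=\coprod_m Z_m$. I would then induct on $m$ via the equivariant localization sequence
\begin{equation*}
\Omega_\ast^T(X_{m-1})\xrightarrow{\,j_\ast\,}\Omega_\ast^T(X_m)\xrightarrow{\,\iota^\ast\,}\Omega_\ast^T(W_m)\to 0,
\end{equation*}
attached to the closed embedding $X_{m-1}\hookrightarrow X_m$ with open complement $W_m$. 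Because $S(T)[M^{-1}]$ is a flat $S(T)$-algebra, right exactness survives base change.

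The rightmost term is controlled as follows. Homotopy invariance of equivariant cobordism makes $\phi_m^\ast: \Omega_\ast^T(Z_m)\to \Omega_\ast^T(W_m)$ an isomorphism, with inverse $s_m^\ast$ for the zero section $s_m$. By Theorem \ref{Self-intersection_formula}, one has $s_m^\ast\circ s_{m\ast}=c_{r_m}^T(N_{Z_m/W_m})$, where $N_{Z_m/W_m}$ is the $T$-equivariant normal bundle, equal to the vertical tangent bundle of $\phi_m$. Since $Z_m\subseteq X^T$, every fiber weight of $N_{Z_m/W_m}$ is a nonzero character; after applying the splitting principle, $c_{r_m}^T(N_{Z_m/W_m})$ has leading term a product of nonzero linear forms in the generators $t_1,\ldots,t_n$ of $S(T)$, hence becomes a unit after base change to $S(T)[M^{-1}]$. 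Therefore $s_{m\ast}$ and consequently the pushforward from $Z_m$ to $W_m$ become isomorphisms in the localized setting. A five-lemma argument applied to the localization sequence, combining the inductive hypothesis on $X_{m-1}$ with this computation for $Z_m\hookrightarrow W_m$, promotes the isomorphism to $\Omega_\ast^T(X_m^T)[M^{-1}]\xrightarrow{\cong}\Omega_\ast^T(X_m)[M^{-1}]$. Setting $m=n$ finishes the proof.

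The main obstacle is showing that the higher-order corrections in the formal-group-law expansion of $c_{r_m}^T(N_{Z_m/W_m})$ do not destroy invertibility: in cobordism the equivariant top Chern class is not simply a product of first Chern classes but a power series whose leading (degree $r_m$) term is the Chow-theoretic product. One has to argue that an element of $S(T)$ whose lowest-degree homogeneous component is a product of nonzero linear forms remains a unit in $S(T)[M^{-1}]$; this uses that $S(T)\cong \La[[t_1,\ldots,t_n]]_{\gr}$ is graded complete and that the inverted multiplicative set $M$ consists of homogeneous linear forms. A secondary but routine issue is verifying the existence of the equivariant localization sequence at the level of mixing quotients, which reduces to applying the non-equivariant Levine--Morel localization sequence to each approximating pair $(V_j,U_j)$ and passing to the limit.
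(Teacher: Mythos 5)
The paper itself offers no proof of this statement---it is imported verbatim from Krishna's paper as \cite[Corollary~7.3]{CobTorus}---so your proposal can only be judged on its own merits. Your overall strategy (Bia{\l}ynicki-Birula filtration, equivariant localization sequence, homotopy invariance for the bundle projections $\phi_m$, and the self-intersection formula of Theorem~\ref{Self-intersection_formula} to produce an invertible equivariant Euler class) is in the right spirit and, in its surjectivity half, it works. Your discussion of why the cobordism Euler class becomes a unit in $S(T)[M^{-1}]$ despite the higher formal-group-law corrections is also the right thing to worry about and is essentially correct.

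The gap is in the sentence invoking ``a five-lemma argument.'' The equivariant localization sequence
\begin{equation*}
\Omega_\ast^T(X_{m-1})\xrightarrow{\,j_\ast\,}\Omega_\ast^T(X_m)\xrightarrow{\,\iota^\ast\,}\Omega_\ast^T(W_m)\to 0
\end{equation*}
is only \emph{right} exact: $j_\ast$ is not known to be injective, not even after inverting $M$, and you cannot assume it is, since the intermediate closed subschemes $X_m$ are in general singular and there is no self-intersection formula available on them. With only right exactness, a diagram chase does give that $i_\ast^{(m)}$ is \emph{surjective} after localization (four-lemma for epimorphisms), so by induction $i_\ast\colon\Omega_\ast^T(X^T)[M^{-1}]\to\Omega_\ast^T(X)[M^{-1}]$ is onto; but injectivity does not follow this way. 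To close the gap, prove injectivity separately by applying Theorem~\ref{Self-intersection_formula} directly to the regular closed embedding $X^T\hookrightarrow X$ (legitimate because $X$ is smooth projective, hence $X^T$ is smooth and the embedding is regular): $i^\ast\circ i_\ast$ is multiplication by $c_{\mathrm{top}}^T(N_{X^T/X})$, which is a unit in $S(T)[M^{-1}]$ by the weight argument you already gave, so $i_\ast$ is injective after base change. Combined with the surjectivity from your induction, this gives the isomorphism. Equivalently, and closer to Krishna's actual route, one can invoke the injectivity of $i^\ast$ (cited in this paper as Theorem~\ref{EquivCobIntersectionImages}, i.e.\ \cite[Theorem~7.1]{CobTorus}), so that $i^\ast\circ i_\ast$ being a unit forces both $i^\ast$ and $i_\ast$ to be isomorphisms after localization, bypassing the filtration induction for this corollary altogether.
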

We recall that the equivariant cobordism module of disconnected varieties is the sum of the equivariant cobordism modules of the connected components. 
\begin{defi}\label{equivariant_mulitplicities}
	Let $X$ be a smooth projective variety with an action of a torus $T$. Further, let $[Y\to X]\in \Omega_\ast^T(X)[M^{-1}]$ and $x\in X$ be an isolated $T$-fixed point. We distinguish between isolated fixed points and connected components $F\subseteq X^T$ which are not an isolated point. For any isolated fixed point we define the \textbf{equivariant multiplicity} $e_{x,X}[Y\to X]\in S(T)[M^{-1}]$ of $X$ at $x$ to be given by the equality 
	\begin{align*}
		[Y\to X]=i_\ast\left(\sum_{\substack{x\in X^T\\ \text{isolated}}}e_{x,X}[Y\to X][x\to x]+\sum_{F\subseteq X^T}e_F[F'\to F]\right)
	\end{align*}
	which holds in $\Omega_\ast^T(X)[M^{-1}]$ for some $e_F\in S(T)[M^{-1}]$ and $[F'\to F]\in \Omega_\ast^T(F)$.
\end{defi}
\begin{lem}\label{Smooth_classes}
	Let $X$ be a smooth projective scheme with a $T$-action. Furthermore, let $Y\subseteq X$ be a closed smooth subscheme. For the class $[f:Y\to X]$ in the $S(T)$-algebra $\Omega_T^\ast(X)$ and any nondegenerate $T$-fixed point $y\in Y$ we have 
	\begin{align*}
		e_{y,X}[Y\to X]=\frac{1}{c_1^T(L_{-\chi_1})\cdots c_1^T(L_{-\chi_{m}})}
	\end{align*}
	in $\Omega^T_\ast(X)[M^{-1}]$ where $\chi_1,...,\chi_{m}$ are the weights of $y$ in $Y$. 
\end{lem}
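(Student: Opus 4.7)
The plan is to compute the pullback of $[Y\to X]$ to the fixed point $y$ in two different ways and equate the results. Factor the inclusion as $i_y^X : \{y\} \xrightarrow{j} Y \xrightarrow{f} X$ and note that $[Y\to X] = f_\ast(1_Y)$. Since $Y$ and $X$ are smooth, both $j$ and $f$ are $T$-equivariant regular embeddings, so Theorem \ref{Self-intersection_formula} applies to each.

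First, I would apply Theorem \ref{Self-intersection_formula} to $f$ to obtain $f^\ast f_\ast(1_Y) = c_{n-m}^T(N_{Y/X})$, where $n = \dim X$ and $m = \dim Y$. Pulling back along $j$ and noting that $N_{Y/X}|_y$ is the direct sum of the weight subspaces of $T_yX$ complementary to $T_yY$, with weights $\mu_1,\dots,\mu_{n-m}$, the splitting principle yields
\begin{align*}
(i_y^X)^\ast[Y\to X] = \prod_{k=1}^{n-m} c_1^T(L_{-\mu_k}).
\end{align*}

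Second, unwinding Definition \ref{equivariant_mulitplicities} in $\Omega_\ast^T(X)[M^{-1}]$ gives
\begin{align*}
[Y\to X] = i_\ast\Big(e_{y,X}[Y\to X]\cdot [y\to y] + \sum_{F\neq\{y\}} e_F[F'\to F]\Big).
\end{align*}
Applying $(i_y^X)^\ast$ and factoring through $X^T$, the terms indexed by components $F\neq\{y\}$ drop out by base change (the corresponding Cartesian square has empty fiber product with $\{y\}$), while on the component $\{y\}$ itself Theorem \ref{Self-intersection_formula} applied to the regular embedding $\{y\}\hookrightarrow X$ gives $i_y^\ast (i_y)_\ast([y\to y]) = c_n^T(T_yX) = \prod_j c_1^T(L_{-\eta_j})$, where $\eta_1,\dots,\eta_n$ are the weights of $T_yX$. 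Hence $(i_y^X)^\ast[Y\to X] = e_{y,X}[Y\to X]\cdot\prod_j c_1^T(L_{-\eta_j})$.

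Equating the two expressions, the weight decomposition $T_yX = T_yY\oplus N_{Y/X}|_y$ identifies $\{\eta_j\}$ with $\{\chi_i\}\sqcup\{\mu_k\}$ as multisets, so the common factor $\prod_k c_1^T(L_{-\mu_k})$ cancels in $S(T)[M^{-1}]$ (invertible there by nondegeneracy of $y$), yielding $e_{y,X}[Y\to X] = 1/\prod_i c_1^T(L_{-\chi_i})$. I expect the main technical point to be carefully justifying that applying $i^\ast\circ i_\ast$ to the expansion isolates the component $\{y\}$; this reduces to base change along the decomposition of $X^T$ into connected components together with the projection formula for equivariant pushforwards.
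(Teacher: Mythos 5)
Your proposal is correct, and it takes a genuinely different route from the paper. The paper first works intrinsically in $Y$: it applies $j^\ast$ for $j\colon Y^T\to Y$ to the multiplicity expansion of $[Y\to Y]$, using Proposition~\ref{Self-intersection_formula} and the Whitney sum formula to derive $e_{y,Y}[Y\to Y]=\bigl(\prod_i c_1^T(L_{-\chi_i})\bigr)^{-1}$; it then pushes the $Y$-expansion forward along $f$ and compares it coefficient-by-coefficient with the $X$-expansion from Definition~\ref{equivariant_mulitplicities}, after applying $i^\ast$ ($i\colon X^T\to X$) to kill the terms not supported on $Y^T$, to conclude the transfer identity $e_{y,Y}[Y\to Y]=e_{y,X}[Y\to X]$. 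You instead compute a single quantity, $i_y^\ast[Y\to X]$, by two routes: once through the factorization $\{y\}\hookrightarrow Y\hookrightarrow X$ using the self-intersection formula for $f$ to obtain $\prod_k c_1^T(L_{-\mu_k})$ with $\mu_k$ the weights of $N_{Y/X}|_y$, and once through the multiplicity expansion in $X$ and the self-intersection formula for $\{y\}\hookrightarrow X$ to obtain $e_{y,X}[Y\to X]\cdot\prod_j c_1^T(L_{-\eta_j})$ with $\eta_j$ the weights of $T_yX$; equating and cancelling the common factor of normal weights (invertible in $S(T)[M^{-1}]$ precisely because $y$ is nondegenerate in $X$) finishes the computation. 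Your route is a bit more direct and avoids making explicit the intermediate identity $e_{y,Y}[Y\to Y]=e_{y,X}[Y\to X]$; the paper's route isolates that transfer step, which is exactly what gets generalised in Proposition~\ref{Non-smooth_classes} to the non-embedded case, so it pays off structurally there. Both proofs rest on the same technical ingredients: Proposition~\ref{Self-intersection_formula}, the splitting of top Chern classes of equivariant bundles at fixed points, and the disjoint decomposition of $X^T$ (and $Y^T$) into connected components to justify that off-component terms vanish under restriction.
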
     
\begin{proof}
	First, we consider the equality
	\begin{align}\label{X_m_multiplicity}
		[Y\to Y]=\sum_{\substack{y\in Y^T\\ \text{isolated}}}e_{y,Y}[Y\to Y][y\to Y]+\sum_{F\subseteq Y^T}e_F[F'\to Y].
	\end{align}
	coming from Definition \ref{equivariant_mulitplicities}.
	For $j:Y^T\to Y$ we apply $j^\ast$ on both sides. Using Proposition \ref{Self-intersection_formula} and the Whitney sum formula we obtain
	\begin{align*}
		[Y^T\to Y^T]=\sum_{\substack{y\in Y^T\\ \text{isolated}}}e_{y,Y}[Y\to Y]\left(\prod_{\substack{\chi \text{ weights of }\\y \text{ in }Y}}c_1^T(L_{-\chi})\right)[y\to y]+\sum_{F\subseteq Y^T}e_F[j^\ast F'\to F]	
	\end{align*}
	which leads to 
	\begin{align*}
		e_{y,Y}[Y\to Y]=\left(\prod_{\substack{\chi \text{ weights of }\\y \text{ in }Y}}c_1^T(L_{-\chi})\right)^{-1}
	\end{align*}
	for all $y\in Y^T$.
	Now, we apply $f_\ast$ to (\ref{X_m_multiplicity})and thus, we have
	\begin{align}\label{pushforward_equality}
		[Y\to X]=\sum_{\substack{y\in Y^T\\ \text{isolated}}}e_{y,Y}[Y\to Y][y\to X]+\sum_{F\subseteq Y^T}e_F[F'\to X].
	\end{align}
	On the other hand, by Definition \ref{equivariant_mulitplicities} we have the equality
	\begin{align*}
		[Y\to X]=\sum_{\substack{x\in X^T\\ \text{isolated}}}e_{x,X}[Y\to X][x\to X]+\sum_{\widetilde{F}\subseteq X^T}e_{\widetilde{F}}[\widetilde{F}'\to X].
	\end{align*}
	Let $i:X^T\to X$ be the inclusion of the fixed point subscheme of $X$. Applying $i^\ast$ implies $e_{x,X}[Y\to X]=0$ for all isolated fixed points $x\notin Y^T$. Similarly, $e_{\widetilde{F}}=0$ if $\widetilde{F}\nsubseteq Y^T$. Thus, we obtain 
	\begin{align}\label{Definition_equality}
		[Y\to X]=\sum_{\substack{y\in Y^T\\ \text{isolated}}}e_{y,X}[Y\to X][y\to X]+\sum_{\widetilde{F}\subseteq Y^T}e_{\widetilde{F}}[\widetilde{F}'\to X].
	\end{align}
	Pulling back the right-hand sides of (\ref{pushforward_equality}) and (\ref{Definition_equality}) along $i$ leads to decompositions in $\Omega_T^\ast(X^T)_\Q$ due to the fact that all images of morphisms $F'\to X$ and $\widetilde{F}'\to X$ are contained in different connected components $F$ and $\widetilde{F}$ of $Y^T$ and thus of $X^T$. Those components $F$ and $\widetilde{F}$ are disjoint from the set of isolated fixed points by assumption. This implies that the second sum cannot contribute to the classes $[y\to X]$ for isolated fixed points $y\in Y^T$. Hence, comparing coefficients in (\ref{pushforward_equality}) and (\ref{Definition_equality}) leads to
	\begin{align*}
		e_{y,Y}[Y\to Y]=e_{y,X}[Y\to X]
	\end{align*}
	which implies the claim.
\end{proof}
Next, we consider classes $[Y\to X]$ of the $S(T)$-algebra $\Omega_T^\ast(X)$ for which $Y$ is not necessarily a closed smooth subscheme of $X$.  
\begin{thm}\label{Non-smooth_classes}
	Let $X$ be a smooth projective scheme with a $T$-action. Let $x\in X$ be a nondegenerate fixed point and $[f:Y\to X]$ a class in the $S(T)$-algebra $\Omega_T^\ast(X)$. Assume further that all fixed points in the fiber $f^{-1}(x)$ are nondegenerate. Then we have
	\begin{align*}
		e_{x,X}[Y\to X]=\sum_{\substack{y\in Y^T\\
				f(y)=x}}e_{y,Y}[Y\to Y].
	\end{align*}
\end{thm}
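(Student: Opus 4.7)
The plan is to mimic the proof of Lemma \ref{Smooth_classes}: push forward the equivariant multiplicity decomposition of $[Y\to Y]$ on $Y$ and compare with the defining decomposition of $[Y\to X]$ on $X$, working throughout in $\Omega^\ast_T(X^T)[M^{-1}]$, where $i_{X,\ast}$ is an isomorphism by Corollary \ref{Krishna_Corollary_7.3}.

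First, I would apply Definition \ref{equivariant_mulitplicities} to $[Y\to Y]\in \Omega^\ast_T(Y)[M^{-1}]$, writing
\begin{align*}
[Y\to Y]=i_{Y,\ast}\!\left(\sum_{\substack{y\in Y^T\\ \text{isolated}}} e_{y,Y}[Y\to Y]\,[y\to y]+\sum_{F\subseteq Y^T} e_F[F'\to F]\right).
\end{align*}
Pushing forward along $f$ and using the commutative square $f\circ i_Y=i_X\circ \bar f$ with $\bar f:=f|_{Y^T}$, I obtain
\begin{align*}
[Y\to X]=i_{X,\ast}\,\bar f_\ast\!\left(\sum_{y}e_{y,Y}[Y\to Y][y\to y]+\sum_{F}e_F[F'\to F]\right).
\end{align*}

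Next, I would compare this with the direct decomposition of $[Y\to X]$ on $X$ coming from Definition \ref{equivariant_mulitplicities}. Since $i_{X,\ast}$ becomes an isomorphism after base change, the two preimages must coincide in $\Omega^\ast_T(X^T)[M^{-1}]=\bigoplus_C \Omega^\ast_T(C)[M^{-1}]$, where the sum runs over connected components $C$ of $X^T$. Projecting onto the direct summand $\Omega^\ast_T(\{x\})[M^{-1}]=S(T)[M^{-1}]$, the $X$-side yields exactly $e_{x,X}[Y\to X]$, while on the $Y$-pushforward side, an isolated $y\in Y^T$ contributes $e_{y,Y}[Y\to Y]$ if $f(y)=x$ and $0$ otherwise, since $\bar f_\ast[y\to y]$ is supported at $f(y)$.

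The one subtle point is to rule out contributions from positive-dimensional components $F\subseteq Y^T$ to the $\{x\}$-summand. If $\bar f(F)\ni x$, then connectedness and $T$-stability force $\bar f(F)$ to lie in the connected component $\{x\}$ of $X^T$, so some $y\in F$ satisfies $f(y)=x$; by hypothesis $y$ is nondegenerate in $Y$, and since $Y$ is smooth (being the source of a cobordism cycle) this forces $y$ to be isolated in $Y^T$, contradicting $y\in F$. This is the one and only place the nondegeneracy assumption on $f^{-1}(x)\cap Y^T$ enters, and is the main conceptual hurdle of the argument; once it is secured, comparing the two expressions for the $\{x\}$-summand yields the claimed identity.
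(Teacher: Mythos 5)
Your proof is correct and follows the same underlying idea as the paper's: push forward the $e_{y,Y}$-decomposition of $[Y\to Y]$ along $f$ and compare with the $e_{x,X}$-decomposition of $[Y\to X]$, isolating the contribution at $x$. The technical realisation differs slightly. The paper chooses an open $T$-stable neighbourhood $U$ of $x$ with $U^T=\{x\}$ and kills off the unwanted terms by pulling back along $j:U\hookrightarrow X$; you instead invoke \Cref{Krishna_Corollary_7.3} to view $i_{X,\ast}$ as an isomorphism after localisation and then project onto the direct summand $\Omega^\ast_T(\{x\})[M^{-1}]=S(T)[M^{-1}]$ of $\Omega^\ast_T(X^T)[M^{-1}]$. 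These two devices are doing the same job, and both rely on $x$ being isolated in $X^T$ (which follows from nondegeneracy of $x$ and smoothness of $X$). One merit of your version is that you make explicit where the hypothesis on the fibre enters: a positive-dimensional component $F\subseteq Y^T$ with $\bar f(F)=\{x\}$ would otherwise spoil the comparison, and this is excluded precisely because every $y\in F\cap f^{-1}(x)$ would be a nondegenerate fixed point of the smooth $Y$, hence isolated in $Y^T$. The paper's proof uses the same exclusion implicitly (when it asserts $j^\ast[F'\to X]=0$) but does not flag it, so your argument is a useful clarification of an elided step rather than a genuinely different route.
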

\begin{proof}
	Let $j:U\to X$ be the inclusion of some open $T$-stable neighbourhood of $x$. By potential shrinking we may assume that $x$ is the unique $T$-fixed point in $X$. Using Definition \ref{equivariant_mulitplicities} in $\Omega_T^\ast(X)$ we obtain
	\begin{align*}
		[Y\to X]=\sum_{\substack{x\in X^T\\ \text{isolated}}}e_{x,X}[Y\to X][x\to X]+\sum_{\widetilde{F}\subseteq X^T}e_{\widetilde{F}}[\widetilde{F}'\to X].
	\end{align*}
	We have $j^\ast[\widetilde{F}'\to X]=0$ if $\Im(\widetilde{F}')\subseteq X$ does not contain $x$. Therefore, pulling back along $j$ yields
	\begin{align*}
		[f^{-1}(U)\to U]=\sum_{x\in U^T}e_{x,X}[Y\to X][x\to U]=e_{x,X}[Y\to X][x\to U].
	\end{align*}
	On the other hand, we have 
	\begin{align*}
		[Y\to Y]=i_\ast\left(\sum_{\substack{y\in Y^T\\ \text{isolated}}}e_{y,Y}[Y\to Y][y\to y]+\sum_{F\subseteq Y^T}e_F[F'\to F]\right).
	\end{align*}
	Applying the pushforward $f_\ast$ to the equation results in
	\begin{align*}
		[Y\to X]=\left(\sum_{\substack{y\in Y^T\\ \text{isolated}}}e_{y,Y}[Y\to Y][y\to X]+\sum_{F\subseteq Y^T}e_F[F'\to X]\right).
	\end{align*}
	Again, $j^\ast[F'\to X]=0$ and $j^\ast[y'\to X]=0$ for any $y'\in Y^T$ if $f(y')\neq x$. Thus, applying the pullback $j^\ast$ yields
	\begin{align*}
		[f^{-1}(U)\to U]=\sum_{\substack{y\in Y^T\\ f(y)=x}}e_{y,Y}[Y\to Y][y\to U].
	\end{align*} 
	Due to the fact that $[x\to U]=[y\to U]$ holds in $\Omega_T^\ast(U)[M^{-1}]$ for any $y\in Y^T$ with $f(y)=x$, we obtain
	\begin{align*}
		e_{x,X}[Y\to X][x\to U]&=\sum_{\substack{y\in Y^T\\ 				f(y)=x}}e_{y,Y}[Y\to Y][y\to U]\\&=\left(\sum_{\substack{y\in Y^T\\ 				f(y)=x}}e_{y,Y}[Y\to Y]\right)[y\to U].
	\end{align*}
	Thus, the corresponding coefficients in $S(T)[M^{-1}]$ must coincide which implies the claim.
\end{proof}
\begin{bei}
	We want to determine the corresponding classes of $\IG(2,5)$ in $S(T)^8_\Q$, see Example \ref{ExampleIG(2,5)}. Therefore, we consider the Bialynicki-Birula decomposition given by the generic one-parameter subgroup $t\mapsto \diag (t^2,t,t^{-1},t^{-2})$ coming from Brion's definition of $T$-filtrable varieties in \cite[Section 3]{BrionTorusActions}. Using Definition \ref{equivariant_mulitplicities} and Lemma \ref{Smooth_classes}, one can compute the pullbacks of the fixed points which are given by
	\begin{align*}
		i_{x_{45}}^\ast[x_{45}\to \IG(2,5)]&=c_1^T(L_{-\varepsilon_1-\varepsilon_2})c_1^T(L_{-2\varepsilon_2})c_1^T(L_{-\varepsilon_2})c_1^T(L_{-2\varepsilon_1})c_1^T(L_{-\varepsilon_1})\\
		i_{x_{35}}^\ast[x_{35}\to \IG(2,5)]&=c_1^T(L_{-\varepsilon_2})c_1^T(L_{\varepsilon_2})c_1^T(L_{-2\varepsilon_1})c_1^T(L_{-\varepsilon_1-\varepsilon_2})c_1^T(L_{\varepsilon_2-\varepsilon_1})\\
		i_{x_{34}}^\ast[x_{34}\to \IG(2,5)]&=c_1^T(L_{-\varepsilon_1})c_1^T(L_{\varepsilon_1})c_1^T(L_{-\varepsilon_1-\varepsilon_2})c_1^T(L_{-2\varepsilon_2})c_1^T(L_{\varepsilon_1-\varepsilon_2})\\
		i_{x_{25}}^\ast[x_{25}\to \IG(2,5)]&=c_1^T(L_{\varepsilon_2})c_1^T(L_{2\varepsilon_2})c_1^T(L_{-2\varepsilon_1})c_1^T(L_{-\varepsilon_1})c_1^T(L_{\varepsilon_2-\varepsilon_1})\\
		i_{x_{23}}^\ast[x_{23}\to \IG(2,5)]&=c_1^T(L_{\varepsilon_2-\varepsilon_1})c_1^T(L_{2\varepsilon_2})c_1^T(L_{\varepsilon_1+\varepsilon_2})c_1^T(L_{-\varepsilon_1})c_1^T(L_{\varepsilon_1})\\
		i_{x_{14}}^\ast[x_{14}\to \IG(2,5)]&=c_1^T(L_{\varepsilon_1-\varepsilon_2})c_1^T(L_{\varepsilon_1})c_1^T(L_{2\varepsilon_1})c_1^T(L_{-2\varepsilon_2})c_1^T(L_{-\varepsilon_2})\\
		i_{x_{13}}^\ast[x_{13}\to \IG(2,5)]&=c_1^T(L_{\varepsilon_1-\varepsilon_2})c_1^T(L_{\varepsilon_1+\varepsilon_2})c_1^T(L_{2\varepsilon_1})c_1^T(L_{-\varepsilon_2})c_1^T(L_{\varepsilon_2})\\
		i_{x_{12}}^\ast[x_{12}\to \IG(2,5)]&=c_1^T(L_{\varepsilon_1})c_1^T(L_{\varepsilon_1+\varepsilon_2})c_1^T(L_{2\varepsilon_1})c_1^T(L_{\varepsilon_2})c_1^T(L_{2\varepsilon_2})
	\end{align*}
	where $x_{45}$ is the most attractive fixed point and $\varepsilon_1,\varepsilon_2$ are given as in Example \ref{ExampleIG(2,5)}. Lastly, using Lemma \ref{Smooth_classes} and by computing the weights on stable neighbourhoods of the fixed points we deduce 
	\begin{align*}
		i_{x_{12}}^\ast[X_0\to \IG(2,5)]&=c_1^T(L_{\varepsilon_1})c_1^T(L_{\varepsilon_1+\varepsilon_2})c_1^T(L_{2\varepsilon_1})c_1^T(L_{\varepsilon_2})c_1^T(L_{2\varepsilon_2})\\
		i_{x_{12}}^\ast[X_1\to \IG(2,5)]&=c_1^T(L_{\varepsilon_1})c_1^T(L_{\varepsilon_1+\varepsilon_2})c_1^T(L_{2\varepsilon_1})c_1^T(L_{2\varepsilon_2})\\
		i_{x_{13}}^\ast[X_1\to \IG(2,5)]&=c_1^T(L_{\varepsilon_1-\varepsilon_2})c_1^T(L_{\varepsilon_1+\varepsilon_2})c_1^T(L_{2\varepsilon_1})c_1^T(L_{\varepsilon_2})\\
		i_{x_{12}}^\ast[X_2\to \IG(2,5)]&=c_1^T(L_{\varepsilon_1})c_1^T(L_{\varepsilon_1+\varepsilon_2})c_1^T(L_{2\varepsilon_1})\\
		i_{x_{13}}^\ast[X_2\to \IG(2,5)]&=c_1^T(L_{\varepsilon_1-\varepsilon_2})c_1^T(L_{\varepsilon_1+\varepsilon_2})c_1^T(L_{2\varepsilon_1})\\
		i_{x_{14}}^\ast[X_2\to \IG(2,5)]&=c_1^T(L_{\varepsilon_1-\varepsilon_2})c_1^T(L_{\varepsilon_1})c_1^T(L_{2\varepsilon_1})\\
		i_{x_{12}}^\ast[X_2'\to \IG(2,5)]&=c_1^T(L_{\varepsilon_1+\varepsilon_2})c_1^T(L_{2\varepsilon_1})c_1^T(L_{2\varepsilon_2})\\
		i_{x_{13}}^\ast[X_2'\to \IG(2,5)]&=c_1^T(L_{\varepsilon_1+\varepsilon_2})c_1^T(L_{2\varepsilon_1})c_1^T(L_{\varepsilon_2})\\
		i_{x_{23}}^\ast[X_2'\to \IG(2,5)]&=c_1^T(L_{\varepsilon_1+\varepsilon_2})c_1^T(L_{\varepsilon_1})c_1^T(L_{2\varepsilon_2})
	\end{align*}
	where $X_2$ and $X_2'$ are the two projective planes obtained by attaching one of the two affine planes to the projective line $X_1$. Therefore, the pullback $i^\ast[\widetilde{X}_3\to \IG(2,5)]$ is given by the sum of $i^\ast[X_2\to \IG(2,5)]$ and $i^\ast[X_2'\to \IG(2,5)]$ where $\widetilde{X}_3$ is the normalisation of $X_3$. 
	
	In the sequel, we set $E_i$ to be the vector space generated by the first $i$ basis vectors of $\C^5$. For the sake of completeness, we remark that $X_0,X_1,X_2$ and $X_2'$ are given by 
	\begin{align*}
		X_0&=\{x_{12}\}, \\
		X_1&=\{V_2\in \IG(2,5)\ \vert \ E_1\subseteq V_2\},\\
		X_2&=\{V_2\in \IG(2,5)\ \vert \ E_1\subseteq V_2\subseteq E_4\},\\
		X_2'&=\{V_2\in \IG(2,5)\ \vert \ V_2\subseteq E_3\}.
	\end{align*} 
	
	Now, we will consider the singular subscheme $X_4\subseteq \IG(2,5)$ which is obtained by attaching the $\A^3$ containing the fixed point $x_{25}$. Geometrically, $X_4$ can be identified with a cone over a surface with only one singular point $x_{12}$. The pullback to smooth $T$-fixed points in $X_4$ works similar as in the previous cases. Therefore, we only consider the pullback to the singular fixed point $x_{12}$. One can compute the blow up of the point $x_{12}$ in $X_4$ explicitly and check that there are four $T$-fixed points in the exceptional divisor $E$. Using Proposition \ref{Non-smooth_classes}, we need to compute the weights of the four $T$-fixed points in $E\subseteq\widetilde{X}_4$. These weights can be seen from the computation directly. Using Proposition \ref{Non-smooth_classes} and Definition \ref{equivariant_mulitplicities} leads to 
	\begin{align*}
		i^\ast_{x_{12}}[\widetilde{X}_4\to \IG(2,5)]&=e_{x_{12},\IG(2,5)}[\widetilde{X}_4\to \IG(2,5)]i^\ast_{x_{12}}[x_{12}\to \IG(2,5)]\\&=\left(\sum_{\substack{\tilde{x}\in \widetilde{X}_4^T\\ f(\tilde{x})=x_{12}}}e_{\tilde{x},\widetilde{X}_4}[\widetilde{X}_4\to \widetilde{X}_4]\right)i^\ast_{x_{12}}[x_{12}\to \IG(2,5)]\\ 
		&=\frac{c_1^T(L_{\varepsilon_1})c_1^T(L_{\varepsilon_1+\varepsilon_2})c_1^T(L_{\varepsilon_2})c_1^T(L_{2\varepsilon_2})}{c_1^T(L_{-\varepsilon_1})c_1^T(L_{\varepsilon_2-\varepsilon_1})}+\frac{c_1^T(L_{\varepsilon_1+\varepsilon_2})c_1^T(L_{2\varepsilon_1})c_1^T(L_{\varepsilon_2})c_1^T(L_{2\varepsilon_2})}{c_1^T(L_{\varepsilon_1})c_1^T(L_{\varepsilon_2-\varepsilon_1})}\\
		&+\frac{c_1^T(L_{\varepsilon_1})c_1^T(L_{\varepsilon_1+\varepsilon_2})c_1^T(L_{2\varepsilon_1})c_1^T(L_{\varepsilon_2})}{c_1^T(L_{-\varepsilon_2})c_1^T(L_{\varepsilon_1-\varepsilon_2})}+\frac{c_1^T(L_{\varepsilon_1})c_1^T(L_{\varepsilon_1+\varepsilon_2})c_1^T(L_{2\varepsilon_1})c_1^T(L_{2\varepsilon_2})}{c_1^T(L_{\varepsilon_2})c_1^T(L_{\varepsilon_1-\varepsilon_2})}.
	\end{align*}  
	We remark that this element reduces to the correct one in Chow rings and that the pullback $i^\ast_{x_{12}}[\widetilde{X}_4\to \IG(2,5)]$ is an element in $S(T)_\Q$. Alternatively, one could check that the geometric descriptions of $X_4$ and $\widetilde{X}_4$ are given by 
	\begin{align*}
		X_4&=\{V_2\in \IG(2,5)\ \vert \ E_2\cap V_2\neq 0\} \text{ and }\\
		\widetilde{X}_4&=\{(V_1,V_2,V_3)\in \PZ(\C^5)\times \IG(2,5)\times \Gr(3,5)\ \vert \ V_1\subseteq E_2\subseteq V_3\subseteq V_1^\perp,V_1\subseteq V_2\subseteq V_3\}.
	\end{align*}
	
	We consider now the closed subscheme $X_5\subseteq \IG(2,5)$ which is obtained by attaching the cell containing the fixed point $x_{34}$ to $X_4$. A short computation shows that the planes containing $x_{12},x_{13}, x_{14}$ and $x_{12},x_{13},x_{23}$ are singular in $X_5$. Normalising yields $X_4$ and $X_4':=X_3\cup (X_5\setminus X_4)$. We remark that $X_4'$ is given by the equations $e_4\land e_5=e_3\land e_5=e_2\land e_5=0$ which implies 
	\begin{align*}
		X_4'=\{V_2\subseteq \C^5 \text{ isotropic} \ \vert \ V_2\subseteq E_4\}.
	\end{align*}  
	One may observe that any isotropic subspace $V_2$ in $E_4$ has to remain isotropic when considering $\overline{V_2}:=(V_2+E_4^\perp)/E_4^\perp\subseteq E_4/E_4^\perp$, but since $E_4/E_4^\perp=\langle e_2,e_4\rangle$ holds, we obtain
	\begin{align*}
		X_4'=\{V_2\subseteq E_4\ \vert \ V_2\cap \langle e_1,e_3\rangle\neq 0\}.
	\end{align*} 
	We claim that a resolution $\widetilde{X}_4'$ of $X_4'$ is given by
	\begin{align*}
		\widetilde{X}_4'=\{(V_1,V_2,V_3)\in \PZ(E_4)\times X_4'\times \Gr(3,E_4)\ \vert \ V_1\subseteq V_2\cap \langle e_1,e_3\rangle, V_3\supseteq V_2+\langle e_1,e_3\rangle\}.
	\end{align*}
	This is birational to $X_4'$. Now, we consider the map
	\begin{align*}
		h:\widetilde{X}_4'\to \{(V_1,V_3)\ \vert \ V_1\subseteq \langle e_1,e_3\rangle, V_3\supseteq \langle e_1,e_3\rangle\}=\PZ^1\times\PZ^1
	\end{align*}
	which is a $\PZ^1$-fibration over $\PZ^1\times \PZ^1$. Therefore, $\widetilde{X}_4'$ is smooth and projective. The only singular point in $X_4'$ is $x_{13}$ and thus, we want to compute $i_{x_{13}}^\ast[\widetilde{X}_4'\to \IG(2,5)]$ using Proposition \ref{Non-smooth_classes}. The $T$-fixed points in the exceptional divisor are given by
	\begin{align*}
		(E_1,\langle e_1,e_3\rangle,E_3), (E_1,\langle e_1,e_3\rangle,\langle e_1,e_3,e_4\rangle), (e_3,\langle e_1,e_3\rangle,E_3) \text{ and } (e_3,\langle e_1,e_3\rangle,\langle e_1,e_3,e_4\rangle).
	\end{align*} 
	Exemplary, we compute the weights for the first $T$-fixed point in the exceptional divisor, i.e. for $\widetilde{x_1}:=(E_1,\langle e_1,e_3\rangle,E_3)$. Therefore, we consider the morphism $h$ and the tangent space $T_{h(\widetilde{x_1})}\PZ^1\times \PZ^1=T_{[1:0];[0:1]}\PZ^1\times \PZ^1$ which leads to the weights $-\varepsilon_1$ and $\varepsilon_1$. The last weight can be seen in the tangent space $T_{\widetilde{x_1}}(h^{-1}(E_1,E_3))=T_{[0:1]}\PZ(e_2,e_3)$. This leads to the weight $\varepsilon_2$. We summarise that the weights of $\widetilde{x_1}$ in $\widetilde{X}_4'$ are given by $-\varepsilon_1,\varepsilon_1$ and $\varepsilon_2$. The weights of the other $T$-fixed points in the exceptional divisor can be computed similarly. Therefore, for any $T$-fixed point $x\in X_5$ one can compute 
	\begin{align*}
		i_x^\ast[\widetilde{X}_5\to \IG(2,5)]=i_x^\ast[\widetilde{X}_4\to \IG(2,5)]+i_x^\ast[\widetilde{X}_4'\to \IG(2,5)].
	\end{align*}

	Lastly, we consider the singular subscheme $X_6\subseteq X$ which is given by 
	\begin{align*}
		X_6=\{V_2\subseteq \C^5\text{ isotropic}\ \big \vert \ V_2\cap \langle e_1,e_2,e_3\rangle\neq \emptyset\}.
	\end{align*}
	 We claim that a resolution $\widetilde{X}_6$ of $X_6$ is given by 
	\begin{align*}
		\widetilde{X}_6=\{(V_1,V_2,V_4)\in \PZ(\C^5)\times X_6\times \Gr(4,5)\ \vert \ V_1\subseteq V_2\cap E_3, V_4\supseteq V_2+E_3, V_4\subseteq V_1^\perp\}.
	\end{align*}
	Again, this is birational to $X_6$. Now, we want to show smoothness of $\widetilde{X}_6$. We consider the map
	\begin{align*}
		f:\widetilde{X}_6\to \{V_4\supseteq E_3\}=\PZ^1, (V_1,V_2,V_4)\mapsto V_4
	\end{align*}
	whose fiber is given by 
	\begin{align*}
		f^{-1}(V_4)=\{(V_1,V_2,V_4)\ \vert \ V_1\subseteq E_3, V_1\subseteq V_2\subseteq V_4, V_4\subseteq V_1^\perp\}
	\end{align*}
	where $V_4\subseteq V_1^\perp \Leftrightarrow V_1\subseteq V_4^\perp$ holds. Consider now the projection
	\begin{align*}
		g:f^{-1}(V_4)\to \{V_1\subseteq V_4^\perp\}\cong \PZ^1, (V_1,V_2,V_4)\mapsto V_1
	\end{align*}
	which is a $\PZ^2$-bundle over $\PZ^1$ because $V_4^\perp$ is two-dimensional. Thus, $f^{-1}(V_4)$ is smooth and therefore, $\widetilde{X}_6$ is smooth and projective.
	
	Now, we want to apply Proposition \ref{Non-smooth_classes} to obtain the pullback $i_x^\ast[\widetilde{X}_6\to \IG(2,5)]$ for the singular $T$-fixed points $x\in X_6$. The singular $T$-fixed points in $X_6$ are $x_{12},x_{13}$ and $x_{23}$. The $T$-fixed points in the exceptional divisor which map to $x_{12}$ are given by $(E_1,E_2,E_4)$ and $(e_2,E_2,\langle E_3,e_5\rangle)$. For the other two singular $T$-fixed points we obtain three $T$-fixed points in the exceptional divisor, e.g. $(e_1,\langle e_1,e_3\rangle,E_4), (e_3,\langle e_1,e_3\rangle,E_4)$ and $(e_3,\langle e_1,e_3\rangle,\langle E_3,e_5\rangle)$ are the $T$-fixed points in the fiber of $x_{13}$. Exemplary, we compute the weights for one of the $T$-fixed points in the fiber of $x_{12}$, i.e. $\tilde{x}:=(E_1,E_2,E_4)$. Therefore, we consider the morphism $f$ and the tangent space $T_{f(\tilde{x})}\PZ^1=T_{[1:0]}\PZ(e_4,e_5)$. Thus, we obtain the weight $-\varepsilon_1+\varepsilon_2$. Next, we need to compute the weights in $T_{\tilde{x}}(f^{-1}(E_4))$. Therefore, we consider the morphism $g$ and the tangent space $T_{g(\tilde{x})}\PZ^1=T_{[1:0]}\PZ(e_1,e_3)$ which leads to the weight $-\varepsilon_1$. Lastly, we consider the tangent space $T_{\tilde{x}}(g^{-1}(E_1))$ which are the two-dimensional spaces containing $e_1$ and contained in $E_4$. Thus, we obtain the last weights from $T_{\tilde{x}}(g^{-1}(E_1))=T_{[1:0:0]}\PZ(e_2,e_3,e_4)$. This leads to the weights $-\varepsilon_2,-2\varepsilon_2$. We summarise that the weights of $\tilde{x}$ in $\widetilde{X}_6$ are given by $-\varepsilon_1+\varepsilon_2,-\varepsilon_1,-\varepsilon_2$ and $-2\varepsilon_2$. Similarly, one can compute all the other weights and apply Proposition \ref{Non-smooth_classes} to finish the computation.     
\end{bei}
\begin{bem}
	Assuming we could determine the pullback at singular points using the equations given in Example \ref{ExampleIG(2,5)} and the weights acting on the tangent space at smooth points as in Chow rings (cf. \cite[Section 4]{BrionTorusActions}), we would be able to determine the class $[\widetilde{X}_4\to \IG(2,5)]$ uniquely. A computation shows that one cannot even determine a unique class in $K$-theory and in fact it is not even known whether these classes correspond to the resolutions of singularities of $X_4$. The fact that one cannot determine the class $[\widetilde{X}_4\to \IG(2,5)]$ uniquely is natural because two different resolutions of singularities determine two different classes in cobordism. For example, one could also consider another resolution of singularities of $X_4$ given by 
	\begin{align*}
		\widetilde{X}^\ast_4=\{(V_1,V_2)\in \PZ(\C^5)\times \IG(2,5)\ \vert \ V_1\subseteq \langle e_1,e_2\rangle, V_2\supseteq V_1 \text{ isotropic}\}
	\end{align*}
	which is a $\PZ^2$-fibration over $\PZ^1$. The exceptional locus of $\widetilde{X}^\ast_4$ over $X_4$ is a $\PZ^1$ over the singular point $x_{12}$. A computation shows that the classes $i_{x_{12}}^\ast[\widetilde{X}_4\to \IG(2,5)]$ and $i_{x_{12}}^\ast[\widetilde{X}^\ast_4\to \IG(2,5)]$ do not coincide, although they both reduce to same one in Chow rings. 
\end{bem}
	\bibliographystyle{acm}
	\bibliography{Bibfile}
\end{document}